\documentclass{article}
\usepackage{geometry}
\usepackage{amsmath, amsthm, amsfonts, amssymb, enumitem, esint}
\usepackage[hidelinks]{hyperref}

\numberwithin{equation}{section}

\newtheorem*{theorem*}{Theorem}
\newtheorem{theorem}{Theorem}[section]
\newtheorem{lemma}[theorem]{Lemma}
\newtheorem{proposition}[theorem]{Proposition}
\newtheorem{corollary}[theorem]{Corollary}
\theoremstyle{definition}
\newtheorem{definition}[theorem]{Definition}
\theoremstyle{remark}

\DeclareMathOperator{\diam}{diam}
\DeclareMathOperator{\dist}{dist}
\DeclareMathOperator{\e}{e}
\DeclareMathOperator{\Id}{Id}
\DeclareMathOperator{\Imm}{Imm}
\DeclareMathOperator{\Ort}{O}
\DeclareMathOperator*{\osc}{osc}
\DeclareMathOperator{\rank}{rank}
\DeclareMathOperator{\SO}{SO}
\DeclareMathOperator{\lspan}{span}
\DeclareMathOperator{\supp}{supp}

\DeclareMathOperator{\vol}{vol}

\newcommand{\csubset}{\subset\joinrel\subset}

\title{\textbf{Rigidity of codimension-1 isometric immersions in complete manifolds}}
\author{\textbf{Mert Baştuğ}}
\date{}

\begin{document}

\maketitle

\begin{abstract}
    We establish an asymptotic rigidity result for isometric immersions of codimension-1. Specifically, we consider a sequence of immersions from a compact $d$-dimensional manifold into a complete $(d+1)$-dimensional manifold whose elastic energies vanish asymptotically, where the elastic energy quantifies both stretching and bending. We show that such a sequence admits a subsequence converging to an isometric immersion. This extends a result of Alpern, Kupferman, and Maor \cite{AKM2} to the case of complete target manifolds, where the lack of compactness introduces additional analytical difficulties. The proof is based on an approach using local quantitative rigidity estimates, obtained via a reduction to the Euclidean setting. This method avoids the use of Young measures and provides a flexible framework that may be of independent interest.
\end{abstract}

\section{Introduction}

According to Liouville's theorem, if a map $u \in C^1(U; \mathbb{R}^d)$, defined on the open and connected set $U \subset \mathbb{R}^d$, satisfies $Du(x) \in \SO(d)$ for all $x \in U$, then $u$ is, in fact, a rigid motion: $u(x) = Rx + b$ for some $R \in \SO(d)$ and $b \in \mathbb{R}^d$. This classical result has been the starting point for various extensions and refinements in the study of rigidity. Reshetnyak \cite{R2, R1} generalized it to an asymptotic rigidity result for maps of lower regularity: If $u_k \in W^{1, p}(U; \mathbb{R}^d)$ and $\dist(Du, \SO(d)) \to 0$ in $L^p$ for $p \in [1, \infty)$, then, up to a subsequence, there exists $R \in \SO(d)$ such that $Du_k \to R$ in $L^p$. More recently, Friesecke, James and Müller obtained their celebrated quantitative rigidity estimate \cite{FJM} (see also \cite[Section 2.4]{CS}), which states that for $p \in (1, \infty)$ and for every $u \in W^{1, p}(U; \mathbb{R}^d)$ defined on a Lipschitz domain $U \subset \mathbb{R}^d$, there exists $R \in \SO(d)$ such that
\begin{equation} \label{eq:fjm}
    \|Du - R\|_{L^p} \le C \|\dist(Du, \SO(d))\|_{L^p},
\end{equation}
where $C$ depends only on $U$ and $p$. The quantitative rigidity estimate plays a central role in nonlinear elasticity, in particular in the rigorous derivation of lower-dimensional models such as plate and shell theories via $\Gamma$-convergence \cite{FJMM, FJM, FJM2, MS}.

Due to recent interest in non-Euclidean elasticity, rigidity results similar to those stated above have been generalized to the Riemannian setting. Let $(M, g)$ and $(N, h)$ be $d$-dimensional compact, oriented Riemannian manifolds. Kupferman, Maor and Schachar \cite{KMS} showed that if $u_k \in W^{1, p}(M; N)$ and
\[
\int_M \dist_{g, h}^p(d(u_k)_x, \SO((T_xM, g_x), (T_{u_k(x)}N, h_{u_k(x)})) \, d\vol_g(x) \to 0
\]
(see Section \ref{sec:notation} for an explanation of the notation), then, up to a subsequence, $(u_k)$ converges in $W^{1, p}$ to a smooth isometric immersion. Furthermore, Conti, Dolzmann and Müller \cite{CDM} extended the quantitative rigidity estimate \eqref{eq:fjm} to maps in $W^{1, p}(M; M)$. Roughly speaking, they prove that for every $u \in W^{1, p}(M; M)$, there exists an orientation preserving isometry $\phi$ such that the $W^{1, p}$-distance between $u$ and $\phi$ is bounded by the ``elastic energy"
\[
\int_M \dist_g^p(du_x, \SO(T_xM, g_x)) \, d\vol_g(x).
\]
See also \cite[Figure 1]{AKM2} for a summary of the existing results.

Such rigidity statements do not hold for codimension-1 maps, that is when the codomain of the map has 1 dimension more than its domain. For example, for any smooth unit-speed curve $u : \mathbb{R} \rightarrow \mathbb{R}^2$, the derivative $du_x : T_x\mathbb{R} \rightarrow T_{u(x)}\mathbb{R}^2$ is a linear isometry for all $x \in \mathbb{R}$, yet $u'$ is not constant. The reason for the gain in flexibility in higher codimension is that the elastic energy we have considered so far does not restrict bending, which is the source of oscillations. To remedy this issue, we augment the elastic energy by adding a term to account for the bending. Suppose $M$ and $N$ are $d$ and $(d + 1)$-dimensional, respectively. Given an immersion $u : M \rightarrow N$ and $x \in M$, let $\nu_u(x) \in T_{u(x)}N$ be the unit normal to $du_x(T_xM)$ that is consistent with the orientation induced by $du_x$. The variation of $\nu_u$ in the ambient space $N$ is measured by a map $S_u : TM \rightarrow TM$ called the shape operator induced by $u$ (we give a more precise definition in Section \ref{sec:sobolev_immersions}). Fix $p \in [1, \infty)$. To each $u$ we assign the sum of a stretching and a bending energy:
\begin{equation*}
    E(u) := \int_M \dist_{g, h}^p(du_x, \Ort((T_xM, g_x), (T_{u(x)}N, h_{u(x)}))) \, d\vol_g(x) + \int_M |du_x \circ (S_u(x) - S(x))|_{g, h}^p \, d\vol_g(x),
\end{equation*}
where $S : TM \rightarrow TM$ is a reference shape operator on $M$. The energy is well-defined on the space of Sobolev immersions given by
\[
\Imm_p(M; N) := \{u \in W^{1, p}(M; N) : \rank du_x = d \text{ for a.e. } x \in M, \, \nu_u \in W^{1, p}(M; TN)\}.
\]
In \cite[Theorem 1.1]{AKM2}, Alpern, Kupferman and Maor prove that if $M$ and $N$ are compact and $E(u_k) \to 0$ for $u_k \in \Imm_p(M; N)$, then there exists a smooth isometric immersion $u \in \Imm_p(M; N)$ such that, up to a subsequence, $(u_k)$ and $(\nu_{u_k})$ converge in $W^{1, p}(M; N)$ to $u$ and $\nu_u$, respectively, and $S_u = S$. Furthermore, the result in \cite[Theorem 1.1]{AKM2} extends a similar result proved by the same authors in \cite{AKM}, where the additional assumption of constant sectional curvature in $N$ was imposed.

The purpose of this paper is to extend the results of Alpern, Kupferman and Maor to the case when $N$ is a complete, not necessarily compact manifold. Our main result can be summarized as follows:

\begin{theorem*}
        Assume $p \in (1, \infty)$. Let $(M, g)$ and $(N, h)$ be oriented, connected, complete Riemannian manifolds of dimensions $d$ and $d + 1$, respectively. Let $M$ be compact. Assume $(u_k) \subset \Imm_p(M; N)$ satisfies $E(u_k) \to 0$ and
        \begin{equation*}
            \limsup_{k \to \infty} \int_M d_h^p(u_k, q_0) \, d\vol_g < \infty
        \end{equation*}
        for some $q_0 \in N$. Then, up to a subsequence, $u_k$ converges in $W^{1, p}$ to an isometric immersion $u \in \Imm_p(M; N)$. If $|S|_g \in L^{p'}(M)$, where $p'$ is the Hölder conjugate of $p$, then $S = S_u$.
\end{theorem*}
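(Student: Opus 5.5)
The plan is to reduce the complete case to local Euclidean quantitative rigidity, following the strategy announced in the abstract.

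\textbf{Step 1: Compactness and confinement of the images.} Since $E(u_k)\to 0$, the stretching term gives $|du_k|_{g,h}\le \sqrt d + \dist(du_k,\Ort)$. Hence $\|du_k\|_{L^p}$ is bounded. Together with the assumed bound on $\int_M d_h^p(u_k,q_0)$, a Poincaré-type argument applied to the real functions $x\mapsto d_h(u_k(x),q_0)$, which are $W^{1,p}$ with gradient at most $|du_k|$, shows that the mean of $d_h(u_k,q_0)$ is bounded. I would then like to show the images stay in a fixed compact set. Mere $W^{1,p}$ bounds do not give $L^\infty$ bounds when $p\le d$. For this reason I will not seek uniform confinement at first. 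Instead I will work locally. Using Chebyshev, for every $R$ the set $\{d_h(u_k,q_0)>R\}$ has small measure uniformly in $k$. The closed ball $\bar B_R(q_0)$ is compact by Hopf--Rinow, and I cover it by finitely many normal coordinate charts of $N$ of fixed radius $\rho$, with bounded geometry there.

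\textbf{Step 2: Local quantitative rigidity.} Fix a small geodesic ball $B\subset M$ with coordinates. Choose a point $x_0\in B$ where $u_k(x_0)\in \bar B_R(q_0)$ and where the local energy is controlled. Compose with the chart of $N$ around $u_k(x_0)$, which makes the metrics $\epsilon$-close to Euclidean. Then $u_k$ becomes a map $B\to\mathbb{R}^{d+1}$ whose differential is close to the isometries $\Ort(d,d+1)$. Its normal $\nu$, read in coordinates, has derivative close to $du_k\circ S$ up to Christoffel error terms of order $\rho$. I then apply the codimension-one Euclidean rigidity estimate for the frame $(du,\nu)$, an FJM-type estimate like \eqref{eq:fjm} for the $\Ort(d+1)$-valued field $[du\,|\,\nu]$. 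This yields a reference frame such that the frame minus it is bounded in $W^{1,p}$ by the local energy plus $O(\rho+\|S\|)\cdot|B|$. Here the key point is that $|d(du,\nu)|$ is controlled pointwise by $|du|,|\nu|$ and $|S|$. An oscillation bound then forces $u_k(B)$ to stay inside the chart, except on a set of measure controlled by the energy. A continuity/absolute-continuity argument in the spirit of Chebyshev should upgrade this to all of $B$, using Morrey-type control on the $W^{1,p}$ frame when $p$ is large or an iteration over shrinking balls otherwise. Patching finitely many balls along chains in the connected manifold $M$ shows that the whole image stays in a compact set $K\subset N$ for large $k$. Ruling out escaping mass is exactly what the $L^p$ distance hypothesis is used for: the point $x_0$ exists, and the chains propagate from it.

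\textbf{Step 3: Conclusion.} Once all $u_k(M)\subset K$ with $K$ compact, I would embed a neighbourhood of $K$ into a compact manifold $\tilde N$, for example by doubling a smooth compact domain containing $K$. The energy is local, so it is unchanged. I then invoke \cite[Theorem 1.1]{AKM2}, or rerun its argument, to obtain a $W^{1,p}$ limit $u$ that is an isometric immersion with $\nu_{u_k}\to\nu_u$. For the identification $S_u=S$ when $|S|_g\in L^{p'}$, I would pass to the limit in the weak formulation. The expression $d\nu_{u_k}-du_k\circ S_{u_k}$ vanishes, $du_k\circ(S_{u_k}-S)\to0$ in $L^p$, and $du_k\circ S\to du\circ S$ weakly in $L^1$ by the Hölder pairing of $L^p$ with $L^{p'}$. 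Hence $d\nu_u=du\circ S$, which gives $S_u=S$.

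The main obstacle is Step 2, namely uniform confinement of images without $L^\infty$ control. The first thing I would try is to run the local rigidity on a fine cover and to use the smallness of the energy, which makes the frame nearly constant in $W^{1,p}$ on each ball, to get $L^\infty$ oscillation bounds.
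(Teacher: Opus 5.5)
The gap is the one you flag yourself, and it is fatal rather than technical. The claim in Step~2 that for large $k$ the whole image $u_k(M)$ lies in a fixed compact $K\subset N$ is false for $p<d$, and Step~3 rests entirely on it. The energy is an integral quantity: it controls $du_k$ only in $L^p$ and is blind to sets of small measure, so no chaining or iteration over shrinking balls can produce $L^\infty$ oscillation bounds.

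For example, take $d=2$, $p\in(1,2)$, $M=S^2$, $N=\mathbb{R}^3$ and $S=\Id$, and modify the round embedding near the north pole. On the geodesic disc of radius $\rho_k$ set $(\rho,\phi)\mapsto(\eta_k\cos\phi,\eta_k\sin\phi,z_k+L_k(1-\rho/\rho_k))$. This is a tube of radius $\eta_k$ and length $L_k$ with a point singularity at the centre. Join it to the sphere by a fillet over $\rho_k<\rho<2\rho_k$ and a radial reparametrization of the rest of the sphere. On the tube, $|du_k|\lesssim L_k/\rho_k+\eta_k/\rho$ and $|K_{TN}\circ d\nu_{u_k}|\lesssim 1/\rho$. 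One then finds $E_s(u_k)+E_b^S(u_k)\lesssim \eta_k^2+(1+L_k^p)\rho_k^{2-p}$ and $\int_M d_h^p(u_k,0)\,d\vol_g\le C+CL_k^p\rho_k^2$. With $\eta_k\to0$, $L_k\to\infty$ and $\rho_k\to0$ fast enough, all hypotheses hold, yet $u_k$ maps a set of positive measure to distance $\gtrsim L_k\to\infty$. So there is no compact target to which \cite[Theorem 1.1]{AKM2} could be applied; the $L^p$ hypothesis only prevents escape \emph{in measure}.

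For $p>d$ your confinement does hold, since Morrey's inequality applied to the functions $d_h(u_k,q)$ gives a uniform H\"older bound, and capping off $N$ is then a reasonable shortcut. But the theorem covers all $p\in(1,\infty)$.

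The paper never confines images. It proceeds as follows.
\begin{enumerate}
    \item It obtains $u_k\to u$ in $L^p$ and a.e.\ from the Rellich-type Theorem~\ref{thm:rellich_kondrachov}.
    \item Using the limit $u$ and Poincar\'e's inequality on small cubes, it shows that, outside a union of cubes of measure $O(\varepsilon^p)$ and for $k$ large, all but an $\varepsilon^p$-fraction of each cube is mapped by $u_k$ into a single $\varepsilon$-isometric chart ball.
    \item The remainder is absorbed through the cut-off extension $\varphi^{(r)}$ of Definition~\ref{def:extension} and crude bounds.
    \item Proposition~\ref{pr:sasaki_distance_in_lp} shows that $(u_k)$ is Cauchy in $\dot W^{1,p}$, and completeness (Proposition~\ref{pr:completeness_of_sobolev_space}) gives the limit.
\end{enumerate}

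Your local rigidity step is also misformulated. The energy controls $K_{TN}\circ d\nu_u$ and $\dist(du,\Ort)$, but not the derivative of $du$. So $|d(du,\nu)|$ is not pointwise bounded by $|du|$, $|\nu|$ and $|S|$, and the frame $[du\,|\,\nu]$ need not be close to a constant in $W^{1,p}$. Already for $d=1$ and $S=0$, the map $u(x)=(f(x),0)$ with $f'=1+a$ has zero bending and stretching energy $\int|a|^p$, while $f''$ is arbitrary. Moreover, $[du\,|\,\nu]$ is not the gradient of an equidimensional map, so \eqref{eq:fjm} does not apply to it.

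What is true is an $L^p$ statement, Theorem~\ref{thm:local_quantitative_rigidity}. Poincar\'e's inequality for the coordinate normal $\tilde n$ selects a point $x_0$ whose normal $n(x_0)$ is representative. Projecting $\varphi^{(r)}\circ u$ onto $n(x_0)^\perp$ then gives an equidimensional map to which Theorem~\ref{thm:noneuclidean_rigidity} applies, with the projection error paid for by $|n-n(x_0)|$.

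Finally, your argument for $S_u=S$ is essentially the paper's Step~5. However, $K_{TN}\circ d\nu_{u_k}$ and $du\circ S$ live in different fibres, so the argument must be run by testing against $\eta\circ u_k$ with $\eta\in C_c^\infty(N;TN)$.
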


See Theorem \ref{thm:asymptotic_rigidity} for a more detailed statement. In comparison to the Young measure approach employed in \cite{AKM2}, our proof is based on local quantitative rigidity estimates for Sobolev immersions, which we derive by a reduction to the Euclidean rigidity estimate \eqref{eq:fjm}. The rigidity estimates are of independent interest.

The main idea behind our rigidity estimate can be explained easily for a map $u \in \Imm_p(M; \mathbb{R}^{d + 1})$. Consider a sufficiently small chart $(W, \psi)$ on $M$ on which the metric $g$ is close to being constant. If the unit normal $\nu_u$ is approximately constant in $W$, then $u(W)$ is close to a $d$-dimensional hyperplane. If we denote by $P$ the projection onto this hyperplane, then the equidimensional map $v := P \circ u \circ \psi^{-1}$ is a close approximation to $u$. Since the metric is almost constant, we can apply the FJM estimate \eqref{eq:fjm} to $v$ to measure the deviation of $du$ from a rotation in terms of its stretching energy in $W$. Our assumption that the normal vector $\nu_u$ does not vary too much in small regions is not true in general. However, thanks to the Poincaré inequality and the control on the variation of $\nu_u$ provided by the bending energy, these regions cannot be too big. 

In order to extend these considerations to Sobolev immersions in $\Imm_p(M; N)$, we could embed $N$ isometrically into a large Euclidean space. However, this approach has the drawback that the results depend on the embedding of $N$. This turns out not to be a problem when $N$ is a compact manifold. Indeed, in a companion paper \cite{B}, we follow this approach to deliver a simpler proof for compact target manifolds. Instead, in the current work, we localize in the codomain as well. However, a major difficulty arises due to the fact that an arbitrary Sobolev map in $W^{1, p}(M; N)$ is, in general, discontinuous. Hence, the image of a small region in $M$ might never be contained in a single chart on $N$. To deal with this problem we follow \cite{CVS} and use extended charts on $N$, that is, globally defined maps that take $N$ into $\mathbb{R}^{d + 1}$ that are diffeomorphisms only in a certain region of $N$ (see also \cite[Lemma 1.6]{CVS}). For the final result, we refer the reader to Theorem \ref{thm:local_quantitative_rigidity}.

We believe that our methods are quite flexible and elementary apart from the reference to the Euclidean quantitative rigidity estimate. The approach can also be extended easily to higher codimensions. However, we note that, in contrast to \cite{AKM, AKM2}, we do not prove the smoothness of the limiting isometric immersion.

The paper is organized as follows. In Section \ref{sec:preliminaries}, we introduce the necessary definitions on Riemannian structures on vector bundles and weakly differentiable maps between manifolds. Since we do not embed $N$ isometrically into Euclidean space, we use the intrinsic definition of manifold-valued Sobolev maps throughout the work, which is based on the notion of colocal weak differentiability (see Definition \ref{def:cwd}) introduced by Convent and Van Schaftingen \cite{CVS}. In Section \ref{sec:quantitative}, we define $\varepsilon$-isometric charts and extensions of charts via cut-offs (see Definitions \ref{def:almost_isometric_chart} and \ref{def:extension}). In the end, we prove the local quantitative rigidity theorem (Theorem \ref{thm:local_quantitative_rigidity}). In Section \ref{sec:asymptotic}, we state our main result (Theorem \ref{thm:asymptotic_rigidity}) and prove some preliminary results. Finally, Section \ref{sec:proof_of_asymptotic_rigidity} is devoted to the proof of Theorem \ref{thm:asymptotic_rigidity}.

\section{Preliminaries} \label{sec:preliminaries}

In this section, we give a brief but mostly self-contained introduction to Riemannian structures on vector bundles and weakly differentiable maps between manifolds, which already appear in the approach by Alpern, Kupferman and Maor \cite{AKM, AKM2}. After a subsection on notation, we recall the definition of a metric connection on a vector bundle $(E, \pi, N)$ and define the connector operator, which allows us to construct an isomorphism between $TE$ and $TN \oplus E$. With the help of this isomorphism, we define the Sasaki metric on the tangent bundle $TE$. For $E = T^*M \otimes TN$, the Sasaki metric allows us to define a distance function between differentials of maps from $M$ to $N$. At the end, we also express the connector operator in coordinates, since this will be useful in later estimates. In the last subsection, we give a brief account of colocally weakly differentiable functions introduced by Convent and Van Schaftingen \cite{CVS}. Later, we use these functions to generalize Sobolev spaces to Riemannian manifolds. Finally, we define Sobolev immersions based on the paper \cite{AKM}.

\subsection{Notation} \label{sec:notation}

Let $V$ be a vector space endowed with a constant metric $g_0$. For $v, v' \in V$, we write $(v, v')_{g_0}$ in place of $g_0(v, v')$ and set $|v|_{g_0} := \sqrt{(v, v)_{g_0}}$. Let $W$ be another vector space endowed with a constant metric $h_0$. We denote by $\mathcal{L}(V, W)$ the space of linear maps from $V$ to $W$. The subset of isometries in $\mathcal{L}(V, W)$ is denoted by $\Ort((V, g_0), (W, h_0))$, while the orientation preserving ones are written as $\SO((V, g_0), (W, h_0))$. 

Given an orthonormal basis $(v_1, \dots, v_d)$ of $V$, We define the Frobenius norm of $T \in \mathcal{L}(V, W)$ by
\[
|T|_{g_0, h_0} := \left(\sum_{i = 1}^d |Tv_i|_{h_0}^2\right)^\frac{1}{2}.
\]
This definition does not depend on the choice of orthonormal basis. In the case $V = W$ and $g_0 = h_0$, we abbreviate the notation to $|T|_{g_0}$. For subsets $\mathcal{T}, \mathcal{S} \subset \mathcal{L}(V, W)$, their distance is defined by
\[
\dist_{g_0, h_0}(\mathcal{T}, \mathcal{S}) := \inf \{|T - S|_{g_0, h_0} : T \in \mathcal{T}, S \in \mathcal{S}\}.
\]
We write $\e_d$ for the Euclidean metric on $\mathbb{R}^d$. Whenever all spaces involved are Euclidean, we omit the metric from the notation and simply write $(v, v')$, $|T|$, $\dist(\mathcal{T}, \mathcal{S})$ etc. For two constant metrics $g_0$, $g_0'$ on $\mathbb{R}^d$, we define
\[
|g_0 - g_0'| := \left(\sum_{i, j = 1}^d |(e_i, e_j)_{g_0} - (e_i, e_j)_{g'}|^2\right)^\frac{1}{2},
\]
where $(e_1, \dots, e_d)$ is the standard basis. If $h_0$ is a constant metric on $\mathbb{R}^n$, we use the shorthand $\Ort(g_0, h_0)$ and $\SO(g_0, h_0)$ for $\Ort((\mathbb{R}^d, g_0), (\mathbb{R}^n, h_0))$ and $\SO((\mathbb{R}^d, g_0), (\mathbb{R}^n, h_0))$, respectively. If $g_0 = h_0$, we further abbreviate to $\Ort(g_0)$ and $\SO(g_0)$.

Let $(M, g)$ and $(N, h)$ be Riemannian manifolds. For a smooth map $f : M \rightarrow N$, we denote its differential by $df : TM \rightarrow TN$. In the special case $N = \mathbb{R}^n$, we identify $T\mathbb{R}^n$ with $\mathbb{R}^{2n}$. Thus, for every $x \in M$, there exists a map $Df(x) : T_xM \rightarrow \mathbb{R}^d$ such that
\[
df_x(v) = (f(x), Df(x)v).
\]
If $f$ is only weakly differentiable, the symbols $df$ and $Df$ are used the same sense as in the smooth case. Finally, we denote by $\vol_g$ the volume form on $M$.

\subsection{Riemannian structures on vector bundles}

We recall the definition of a connection on a vector bundle. For details, we refer the reader to \cite[Chapter 12]{LJ} and \cite[Chapters 4,5]{L}. Let $(N, h)$ be an $n$-dimensional Riemannian manifold and let $(E, \pi, N)$ be a smooth vector bundle of rank $k$ equipped with a bundle metric, that is, for every $q \in N$ there exists an inner product $\kappa_q$ on $E_q = \pi^{-1}(q)$ that depends smoothly on $q$. We denote the set of all sections of $E$ by $\Gamma(E)$. A map $\nabla : \Gamma(TN) \times \Gamma(E) \rightarrow \Gamma(E)$, written $(X, Y) \mapsto \nabla_X Y$, is called a connection if
\begin{enumerate}
    \item For all $X_1, X_2 \in \Gamma(TN)$, $Y \in \Gamma(E)$, and $f_1, f_2 \in C^\infty(N)$,
    \[
    \nabla_{f_1 X_1 + f_2 X_2} Y = f_1 \nabla_{X_1} Y + f_2 \nabla_{X_2} Y.
    \]
    \item For all $X \in \Gamma(TN)$, $Y_1, Y_2 \in \Gamma(E)$ and $a_1, a_2 \in \mathbb{R}$,
    \[
    \nabla_X (a_1 Y_1 + a_2 Y_2) = a_1 \nabla_X Y_1 + a_2 \nabla_X Y_2.
    \]
    \item For all $X \in \Gamma(TN)$, $Y \in \Gamma(E)$ and $f \in C^\infty(N)$,
    \[
    \nabla_X (f Y) = f \nabla_X Y + X(f) Y.
    \]
\end{enumerate}
Moreover, $\nabla$ is called metric if for all $X \in \Gamma(TN)$, $Y_1, Y_2 \in \Gamma(E)$,
\[
X((Y_1, Y_2)_\kappa) = (\nabla_X Y_1, Y_2)_\kappa + (Y_1, \nabla_X Y_2)_\kappa.
\]
Given a metric connection $\nabla$ on $E$, we sketch the construction of a natural Riemannian metric on $E$ called the Sasaki metric. The interested reader can refer to \cite[Chapter 3, Ex. 2]{DC}, \cite[Chapter 2.B.6]{GHL} or \cite{S} for more details. We start with a decomposition of the tangent bundle $TE$. Let $e \in E_q$ and let $\alpha: (-\varepsilon, \varepsilon) \rightarrow E$ be a smooth curve with $\alpha(0) = e$. We set $\gamma := \pi \circ \alpha$, and we view $\alpha$ as a vector field along $\gamma$. Intuitively, the local behavior of $\alpha$ at $0$ is determined by the two vectors $\gamma'(0)$ and $\frac{D\alpha}{dt}(0)$, where $\frac{D}{dt}$ is the covariant derivative determined by $\nabla$. Formally, this is reflected in the fact that if $\beta : (-\varepsilon, \varepsilon) \rightarrow E$ is another smooth curve with $\beta(0) = e$ and $\eta = \pi \circ \beta$, then
\begin{equation} \label{eq:curve_local_behavior}
    \alpha'(0) = \beta'(0) \iff \left(\gamma'(0), \frac{D\alpha}{dt}(0)\right) = \left(\eta'(0), \frac{D\beta}{dt}(0)\right).
\end{equation}
This observation yields a natural decomposition of $TE$ as we now show.

Let $\xi \in TE$. Then $\xi = \alpha'(0)$ for a smooth curve $\alpha : (-\varepsilon, \varepsilon) \rightarrow E$. We define the so called connector operator (see also \cite[Chapter 4]{M})
\begin{align} \label{eq:connector_operator}
    K_E : TE &\rightarrow E, \\
    \xi &\mapsto \frac{D\alpha}{dt}(0). \nonumber
\end{align}
By \eqref{eq:curve_local_behavior}, the connector operator is well-defined. We remark that the connector operator is simply another way of expressing the covariant derivative. If $X \in \Gamma(TN)$ and $Y \in \Gamma(E)$, then $K_E \circ dY \circ X = \nabla_X Y$ as the reader can easily verify. 

If we denote by $TN \oplus E$ the Whitney sum of $TN$ and $E$, then it follows from \eqref{eq:curve_local_behavior} and the relation $\gamma'(0) = d\pi(\xi)$ that the following map is an isomorphism:
\begin{align*}
    TE &\rightarrow TN \oplus E, \\
    \xi &\mapsto (d\pi(\xi), K_E(\xi)).
\end{align*}
Identifying $TE$ with $TN \oplus E$ according to the isomorphism and declaring $TN$ orthogonal to $E$ yields the inner product
\begin{equation} \label{eq:sasaki_metric}
    (\xi_1, \xi_2) \mapsto (d\pi(\xi_1), d\pi(\xi_2))_h + (K_E(\xi_1), K_E(\xi_2))_\kappa, \quad \xi_1, \xi_2 \in T_eE.
\end{equation}
As a result, we obtain a Riemannian metric on $E$ that we call the Sasaki metric. We denote it by $\sigma$. The Sasaki metric enables us to define a distance on the vector bundle $E$. The corresponding Riemannian distance function is called the Sasaki distance.

In the following proposition, we give an explicit characterization of the Sasaki distance \cite[Section 2]{S}.
\begin{proposition} \label{pr:sasaki_distance}
    Let $N$ be connected. Let $q_1, q_2 \in N$ and let $e_1 \in E_{q_1}$, $e_2 \in E_{q_2}$. Then
    \begin{equation} \label{eq:sasaki_distance}
        d_\sigma(e_1, e_2)^2 = \inf_\gamma |e_1 - P^\gamma(e_2)|_\kappa^2 + \left(\int_0^1 |\gamma'|_h\, dt\right)^2,
    \end{equation}
    where the infimum is taken over all piecewise regular curves $\gamma: [0, 1] \rightarrow N$ with $\gamma(0) = q_1, \gamma(1) = q_2$, and $P^\gamma : E_{q_2} \rightarrow E_{q_1}$ is the parallel transport along $\gamma$ with respect to $\nabla$. 
\end{proposition}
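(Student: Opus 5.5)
We prove the formula by computing the Sasaki length of a curve in $E$. Let $D$ denote the right-hand side of \eqref{eq:sasaki_distance}; we show both $d_\sigma(e_1,e_2) \ge D$ and $d_\sigma(e_1,e_2) \le D$.

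\textbf{Length of a lifted curve.} Let $\alpha:[0,1]\to E$ be a piecewise regular curve from $e_1$ to $e_2$, and set $\gamma=\pi\circ\alpha$. By \eqref{eq:sasaki_metric} and the definition \eqref{eq:connector_operator} of $K_E$,
\[
L_\sigma(\alpha)=\int_0^1 \Big(|\gamma'|_h^2+\Big|\tfrac{D\alpha}{dt}\Big|_\kappa^2\Big)^{1/2}dt .
\]
To handle the fibre component, fix a parallel orthonormal frame $(E_1,\dots,E_k)$ along $\gamma$. Such a frame exists because $\nabla$ is metric, so parallel transport is an isometry. Write $\alpha(t)=\sum_i a_i(t)E_i(t)$. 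Then $\frac{D\alpha}{dt}=\sum_i a_i'E_i$, hence $|\frac{D\alpha}{dt}|_\kappa=|a'|$ for the curve $a:[0,1]\to\mathbb{R}^k$.

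\textbf{Lower bound.} We apply the Minkowski inequality for the integral of Euclidean norms in $\mathbb{R}^{1+k}$ to the curve $t\mapsto(\int_0^t|\gamma'|_h,\,a(t))$:
\[
L_\sigma(\alpha)\ge \Big(\Big(\int_0^1|\gamma'|_h\Big)^2+|a(1)-a(0)|^2\Big)^{1/2}.
\]
Since parallel transport along $\gamma$ is an isometry taking the frame at time $1$ to the frame at time $0$, we have $|a(1)-a(0)|=|P^\gamma(e_2)-e_1|_\kappa$. Taking the infimum over $\alpha$ gives $d_\sigma(e_1,e_2)^2\ge D^2$.

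\textbf{Upper bound.} Fix a piecewise regular $\gamma$ from $q_1$ to $q_2$ with length $\ell$. Reparametrize $\gamma$ proportionally to arc length, so that $|\gamma'|_h\equiv\ell$. Define $\alpha(t)=\sum_i a_i(t)E_i(t)$ with $a$ affine, going from the coordinates of $e_1$ to the coordinates of $e_2$. Then the integrand above is constant and equal to $(\ell^2+|e_1-P^\gamma e_2|_\kappa^2)^{1/2}$, so equality holds in Minkowski. Hence $d_\sigma\le D$.

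\textbf{Remaining technical points.} Minimizing in the definition of $d_\sigma$ over piecewise regular curves in $E$ is legitimate, and these project to piecewise $C^1$ curves $\gamma$. Such a $\gamma$ can be replaced by a piecewise regular one. One way is to note that constant pieces of $\gamma$ contribute only fibre motion, which the Minkowski bound already accounts for. Alternatively, one can approximate, using continuity of the right-hand side under small perturbations of $\gamma$. Connectedness of $N$ guarantees that admissible curves exist.

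The main obstacle is this bookkeeping around regularity and reparametrization. The core estimate is the Minkowski inequality in a parallel frame.
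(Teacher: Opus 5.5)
Your proof is correct and follows the paper's argument. Both use a parallel orthonormal frame along $\pi\circ\alpha$, get the lower bound from a convexity inequality (your triangle inequality for the $\mathbb{R}^{1+k}$-valued integral is equivalent to the paper's weighted Cauchy--Schwarz step with $l_1, l_2$), and get the upper bound from affine interpolation of the frame coefficients. Your constant-speed reparametrization is genuinely needed to turn $\int_0^1|\gamma'|_h^2\,dt$ into $(\int_0^1|\gamma'|_h\,dt)^2$, which the paper's final display skips, and your remark on the possible non-regularity of $\pi\circ\alpha$ covers a point the paper leaves implicit.
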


\begin{proof}
    Let $\alpha: [0, 1] \rightarrow E$ be a piecewise regular curve with $\alpha(0) = e_1$ and $\alpha(1) = e_2$. Set $\gamma := \pi \circ \alpha$. Let $(X_i(t))_{i = 1}^k$ be an orthonormal basis of $E_{\gamma(t)}$ for $t \in [0, 1]$, and assume that $X_i$ is parallel along $\gamma$, that is, $\frac{DX_i}{dt} = 0$. Then there exists a function $a: [0, 1] \rightarrow \mathbb{R}^k$ such that
    \[
    \alpha(t) = \sum_{i = 1}^k a_i(t) X_i(t).
    \]
    Since $\frac{D X_i}{dt} = 0$, we get
    \[
    \frac{D\alpha}{dt}(t) = \sum_{i = 1}^k a'_i(t)X_i(t).
    \]
    Therefore,
    \[
    |\alpha'(t)|_\sigma^2 = |\gamma'(t)|_h^2 + \left|\sum_{i = 1}^k a_i'(t)X_i(t)\right|_\kappa^2 = |\gamma'(t)|_h^2 + |a'(t)|^2.
    \]
    We set
    \[
    l_1 = \int_0^1 |\gamma'|_h \, dt , \quad l_2 = \int_0^1 |a'| \, dt.
    \]
    By the concavity of the square root function, we obtain
    \[
    \int_0^1 |\alpha'|_\sigma \, dt = \int_0^1 (|\gamma'|_h^2 + |a'|^2)^\frac{1}{2} \, dt \ge \frac{l_1}{(l_1^2 + l_2^2)^\frac{1}{2}} \int_0^1 |\gamma'|_h \, dt + \frac{l_2}{(l_1^2 + l_2^2)^\frac{1}{2}} \int_0^1 |a'| \, dt = (l_1^2 + l_2^2)^\frac{1}{2}.
    \]
    Notice that $l_2 \ge |a(0) - a(1)| =  |e_1 - P^\gamma(e_2)|_\kappa$. This proves that the left-hand side is greater than or equal to the right-hand side in \eqref{eq:sasaki_distance}. To show the reverse inequality let $\gamma: [0, 1] \rightarrow N$ be piecewise regular curve with $\gamma(0) = q_1, \gamma(1) = q_2$. Define $a, b \in \mathbb{R}^k$ and $\alpha : [0, 1] \to E$ by
    \[
    e_1 = \sum_{i = 1}^k a_i X_i(0), \quad e_2 = \sum_{i = 1}^k b_i X_i(1), \quad \alpha(t) := \sum_{i = 1}^k (a_i + t(b_i - a_i)) X_i(t).
    \]
    Then $\alpha(0) = e_1$, $\alpha(1) = e_2$ and
    \[
    d_\sigma(e_1, e_2)^2 \le \left(\int_0^1 |\alpha'|_\sigma \, dt\right)^2 \le \int_0^1 |\alpha'|_\sigma^2 \, dt \le \int_0^1 |a - b|^2 + |\gamma'|_h^2 \, dt = |e_1 - P^\gamma e_2|_\kappa^2 + \int_0^1 |\gamma'|_h \, dt.
    \]
\end{proof}

We will use the following simple facts throughout the article. The zero vector in $E_q$ is denoted by $0_q$.

\begin{proposition} \label{pr:triangle_inequality}
     Let $q_1, q_2 \in N$ and let $e_0, e_1 \in E_{q_1}$, $e_2 \in E_{q_2}$. Then
    \begin{gather*}
        d_\sigma(e_0, e_1) = |e_0 - e_1|_\kappa, \\
        d_\sigma(0_{q_1}, 0_{q_2}) = d_h(q_1, q_2), \\
        d_\sigma(e_1, e_2) \le |e_1|_\kappa + |e_2|_\kappa + d_h(q_1, q_2).
    \end{gather*}
\end{proposition}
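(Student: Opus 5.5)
All three assertions will be read off from the explicit formula of Proposition~\ref{pr:sasaki_distance}. Throughout, for $q_1,q_2 \in N$ we consider the infimum in \eqref{eq:sasaki_distance} over curves $\gamma$ from $q_1$ to $q_2$.

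\emph{Second and third assertions.} For the zero vectors, parallel transport is linear, so $P^\gamma(0_{q_2}) = 0_{q_1}$ for every $\gamma$. Hence \eqref{eq:sasaki_distance} reduces to $\inf_\gamma (\int_0^1|\gamma'|_h\,dt)^2$. This is $d_h(q_1,q_2)^2$ by the definition of the Riemannian distance, which gives $d_\sigma(0_{q_1},0_{q_2}) = d_h(q_1,q_2)$. For the third assertion, $P^\gamma$ is a $\kappa$-isometry because $\nabla$ is metric. Thus, for any $\gamma$,
\[
|e_1 - P^\gamma e_2|_\kappa^2 + L(\gamma)^2 \le \bigl(|e_1|_\kappa + |e_2|_\kappa + L(\gamma)\bigr)^2 ,
\]
and taking the infimum over $\gamma$ gives $d_\sigma(e_1,e_2) \le |e_1|_\kappa+|e_2|_\kappa+d_h(q_1,q_2)$. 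Equivalently, one can use the triangle inequality for $d_\sigma$ through $0_{q_1}$ and $0_{q_2}$, together with the first two assertions.

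\emph{First assertion, upper bound.} Here $q_1 = q_2$. Inserting the constant curve $\gamma \equiv q_1$ into \eqref{eq:sasaki_distance} gives $P^\gamma = \Id$ and length $0$, so $d_\sigma(e_0,e_1) \le |e_0-e_1|_\kappa$. This is the direction that is used later when differentials are compared at the same base point.

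\emph{First assertion, lower bound (main obstacle).} For the reverse inequality we must show that, for every loop $\gamma$ at $q_1$,
\[
|e_0 - P^\gamma e_1|_\kappa^2 + L(\gamma)^2 \ge |e_0-e_1|_\kappa^2 .
\]
By the triangle inequality, it suffices to prove the holonomy bound $|P^\gamma e_1 - e_1|_\kappa \lesssim L(\gamma)$, with the constant in the right form. I would first check whether the connection is flat on the relevant region, since then $P^\gamma = \Id$ for contractible loops. Otherwise I would try a curvature estimate: the deviation of $P^\gamma$ from $\Id$ is controlled by the curvature times the area enclosed by $\gamma$, which scales like $L(\gamma)^2$.

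I expect this step to fail as stated when $E$ carries curvature. The holonomy term is of order $|e_1|_\kappa\, L(\gamma)^2$, so for large $|e_1|_\kappa$ it can exceed $L(\gamma)$. For example, rotating a long vector by a small loop on the sphere gives $d_\sigma(e_0,e_1) < |e_0-e_1|_\kappa$. Accordingly, I would prove equality when the holonomy is trivial (for instance, for flat $E$). In general I would record the first assertion as the inequality $d_\sigma(e_0,e_1)\le|e_0-e_1|_\kappa$. I would also check that only this direction is invoked in the subsequent estimates.
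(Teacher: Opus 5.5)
Your treatment of the second and third assertions is correct and essentially matches the paper's. The paper also reads the second off \eqref{eq:sasaki_distance}, and it proves the third by the triangle inequality through $0_{q_1}$ and $0_{q_2}$. Your direct argument via the isometry property of $P^\gamma$ is equally valid, and it has the advantage of not relying on the first assertion at all.

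On the first assertion you are right and the paper is not. The paper says the equality ``follows immediately'' from \eqref{eq:sasaki_distance}. In fact only $d_\sigma(e_0,e_1)\le|e_0-e_1|_\kappa$ follows, and the reverse inequality is false whenever the holonomy is nontrivial.

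You can make your counterexample exact rather than heuristic. Let $\gamma$ be a loop at $q_1$ with holonomy $\tau_\gamma$, and let $\alpha$ be the horizontal lift of $\gamma$ starting at $e_1$. Then $K_E(\alpha')=0$, so $|\alpha'|_\sigma=|\gamma'|_h$. Hence $d_\sigma(e_1,\tau_\gamma e_1)\le L(\gamma)$, with no dependence on $|e_1|_\kappa$. Concretely, on the unit sphere $S^2$ take the geodesic triangle with one vertex at the north pole and two equatorial vertices a quarter circle apart. It has length $3\pi/2$, and its holonomy is a rotation by $\pi/2$. For $|e_1|_\kappa=10$ this gives $|e_1-\tau_\gamma e_1|_\kappa=10\sqrt{2}>3\pi/2\ge d_\sigma(e_1,\tau_\gamma e_1)$.

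By Lemma \ref{lm:map_parallel_transport}, this already happens in the paper's own setting $d=1$, $M=S^1$, $N=S^2$, $E=T^*M\otimes TN$. Your check of the downstream uses is also correct: the first assertion is only ever used as an upper bound. It appears in the third assertion, in \eqref{eq:pointwise_sasaki_distance}, and in Steps 1 and 5 of Section \ref{sec:proof_of_asymptotic_rigidity}, for $d_\sigma(\nu_k(x),0_{u_k(x)})$ and $d_\sigma(du_k\circ S_{u_k},du_k\circ S)$. So replacing ``$=$'' by ``$\le$'' costs nothing.

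Two small corrections to your write-up. First, a constant curve is not piecewise regular. For the upper bound, use instead the vertical segment $t\mapsto e_0+t(e_1-e_0)$ in $E_{q_1}$, whose Sasaki speed is $|e_1-e_0|_\kappa$. Alternatively, traverse a short geodesic segment out and back: it has $P^\gamma=\Id$ and arbitrarily small length. Second, flatness of $E$ gives equality only when the holonomy is actually trivial, for instance over a simply connected base. A flat bundle with nontrivial monodromy already violates it: in the flat M\"obius line bundle over a circle of length $2\pi$, one has $d_\sigma(e,-e)\le 2\pi<2|e|_\kappa$ once $|e|_\kappa>\pi$.
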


\begin{proof}
    The first and second equalities follow immediately from \ref{eq:sasaki_distance}. To prove the third one we use the triangle inequality:
    \[
    d_\sigma(e_1, e_2) \le d_\sigma(e_1, 0_{q_1}) + d_\sigma(0_{q_1}, 0_{q_2}) + d_\sigma(0_{q_2}, e_2) = |e_1|_\kappa + |e_2|_\kappa + d_h(q_1, q_2).
    \]
\end{proof}

Let $(M, g)$ be a connected $d$-dimensional Riemannian manifold. In this paper, we employ the Sasaki metric in order to define a distance between differentials of maps from $M$ to $N$. To this end, we set $E := T^*M \otimes TN$, and for $L \in T_x^*M \otimes T_qN \subset T^*M \otimes TN$, we define $\pi(L) := (x, q)$. Then $(E, \pi, M \times N)$ is a smooth vector bundle. Furthermore, we can endow each fiber $T_p^*M \otimes T_qN$ with the inner product $g_x^* \otimes h_q$. In order to define a metric connection on $T^*M \otimes TN$, we let $\nabla^g$ and $\nabla^h$ be the Levi-Civita connections on $M$ and $N$, respectively. Then there exists a unique metric connection $\nabla$ satisfying
\begin{equation} \label{eq:tensor_bundle_connection}
    \nabla_{(Y, Z)} (\omega \otimes X) = (\nabla_Y^g \omega) \otimes X + \omega \otimes (\nabla_Z^h X)
\end{equation}
for all $\omega \in \Gamma(T^*M)$, $X \in \Gamma(TN)$, $Y \in \Gamma(TM)$ and $Z \in \Gamma(TN)$.

Using Proposition \ref{pr:sasaki_distance}, we can also derive an explicit characterization for the Sasaki distance on $T^*M \otimes TN$. If $\gamma : [0, 1] \rightarrow N$ is a piecewise regular curve with $\gamma(0) = q_1$ and $\gamma(1) = q_2$, then we denote the parallel transport map along $\gamma$ with respect to $\nabla^h$ by $P_{TN}^\gamma : T_{q_2}N \rightarrow T_{q_1}N$.

\begin{corollary} \label{co:maps_sasaki_distance}
     Let $x \in M$, $q_1, q_2 \in N$ and let $L_1 \in T_x^*M \otimes T_{q_1}N$, $L_2 \in T_x^*M \otimes T_{q_2}N$. Then
    \begin{equation} \label{eq:maps_sasaki_distance}
        d_\sigma(L_1, L_2)^2 = \inf_\gamma |L_1 - L_\gamma|_{g, h}^2 + \left(\int_0^1 |\gamma'|_h \, dt\right)^2,
    \end{equation}
    where the infimum is taken over all piecewise regular curves $\gamma: [0, 1] \rightarrow N$ with $\gamma(0) = q_1, \gamma(1) = q_2$ and
    \begin{equation} \label{eq:linear_map_transport}
        L_\gamma v = P_{TN}^\gamma(L_2v) \quad \text{ for all } v \in T_xM.
    \end{equation}
    
\end{corollary}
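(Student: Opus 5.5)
The plan is to apply Proposition \ref{pr:sasaki_distance} to the bundle $E = T^*M \otimes TN$ over the base $M \times N$, with the product metric $g \oplus h$ and the connection \eqref{eq:tensor_bundle_connection}. The points $L_1$ and $L_2$ lie over $(x, q_1)$ and $(x, q_2)$. The proposition then expresses $d_\sigma(L_1, L_2)^2$ as an infimum over piecewise regular curves $\Gamma = (\beta, \gamma) : [0,1] \to M \times N$ with $\beta(0) = \beta(1) = x$. Each candidate contributes
\[
|L_1 - P^\Gamma(L_2)|_{g,h}^2 + \Big(\int_0^1 (|\beta'|_g^2 + |\gamma'|_h^2)^{1/2}\, dt\Big)^2 .
\]
The goal is to show that this infimum equals the one in \eqref{eq:maps_sasaki_distance}.

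\textbf{Upper bound.} Restrict to curves with $\beta \equiv x$ constant. Regularity is unaffected, since $\Gamma' = (0, \gamma')$ is nonzero whenever $\gamma'$ is. For such $\Gamma$, the formula \eqref{eq:tensor_bundle_connection} shows that parallel transport acts trivially on the $T^*_xM$ factor. Indeed, if $\omega$ is constant along the curve, then $\nabla_{(0,\gamma')}(\omega\otimes X) = \omega \otimes \nabla^h_{\gamma'} X$. Hence $P^\Gamma(\omega \otimes X) = \omega \otimes P^\gamma_{TN} X$, so $P^\Gamma(L_2) = L_\gamma$ as in \eqref{eq:linear_map_transport}. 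The length term also reduces to $\int_0^1 |\gamma'|_h\,dt$. Taking the infimum over such curves gives $d_\sigma(L_1,L_2)^2 \le$ the right-hand side.

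\textbf{Lower bound.} This is the main obstacle, because a curve $\Gamma$ that moves in $M$ could in principle reduce the parallel-transport term through holonomy in $T^*M$. Since $\nabla$ is the product connection, $P^\Gamma = (P^\beta_{T^*M}) \otimes P^\gamma_{TN}$. Concretely, $P^\Gamma(L_2) = L_\gamma \circ A$, where $A := (P^{\beta}_{TM})^{-1}$ is a $g_x$-orthogonal map of $T_xM$ given by holonomy around the loop $\beta$. The key difficulty is that $|L_1 - L_\gamma \circ A|$ can be smaller than $|L_1 - L_\gamma|$.

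To handle this, I would compare the candidate for $\Gamma$ against a competitor built from the curve $\gamma$ alone. The extra length $\int |\beta'|_g$ is paid in the length term, and I would try to show it dominates the possible gain $|L_1 - L_\gamma| - |L_1 - L_\gamma A|$. The gain is at most $|L_\gamma|\,|\Id - A|$. However, holonomy around short loops is only quadratically small in the loop length, so this comparison does work for small loops. The real worry is that it may not close in general, since $|L_\gamma|$ is unbounded and large loops can have nontrivial holonomy.

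If that bound fails, I would reinterpret. Comparing differentials $du_x$ and $dv_x$ at the same base point $x$, the natural object is the fiber distance of $T^*_xM \otimes TN$ over $\{x\}\times N$, that is, the Sasaki distance for the bundle $T_x^*M\otimes TN \to N$. I would then apply Proposition \ref{pr:sasaki_distance} directly to that bundle, with base $N$ and connection $\nabla^h$ acting on the second factor. This gives \eqref{eq:maps_sasaki_distance} immediately, exactly by the upper-bound computation above. So if the global lower bound fails, I would adopt this restricted-bundle interpretation, which I suspect is the intended reading.
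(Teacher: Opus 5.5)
Your upper-bound computation is the paper's argument. The paper proves the corollary by combining Proposition~\ref{pr:sasaki_distance} with Lemma~\ref{lm:map_parallel_transport}, which identifies transport along curves $(x,\gamma)$ with $L\mapsto L_\gamma$. It never considers curves in $M\times N$ whose $M$-component moves. So the paper's argument establishes two things only: the identity for the bundle $T_x^*M\otimes TN\to N$ (curves confined to $\{x\}\times N$), and the inequality ``$\le$'' for the Sasaki distance of $T^*M\otimes TN\to M\times N$.

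Your suspicion about the reverse inequality is correct, and you should state it outright rather than as a contingency: it fails. Here is a counterexample.

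\begin{itemize}
\item[(a)] Take $M=S^2$ (round), $N=\mathbb{R}^3$, the product metric on $M\times N$, and $q_1=q_2=q$. Let $\beta$ be a loop at $x$ of length $\ell$ with holonomy angle $\alpha\notin 2\pi\mathbb{Z}$, for instance a small circle through $x$.
\item[(b)] By \eqref{eq:tensor_bundle_connection}, transport around $\Gamma=(\beta,q)$ maps $L\in T_x^*M\otimes T_qN$ to $L\circ C$, where $C$ is a rotation of $(T_xM,g_x)$ by $\pm\alpha$.
\item[(c)] Put $L_2:=\lambda\iota$, with $\iota:T_xS^2\to\mathbb{R}^3$ the inclusion, and $L_1:=L_2\circ C$. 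The parallel section along $\Gamma$ from $L_2$ to $L_1$ has Sasaki speed $|\Gamma'|$, so $d_\sigma(L_1,L_2)\le\ell$.
\item[(d)] Since $\mathbb{R}^3$ is flat, $L_\gamma=L_2$ for every loop $\gamma$ at $q$. Hence the right-hand side of \eqref{eq:maps_sasaki_distance} equals $|L_1-L_2|^2=\lambda^2|C-\Id|^2=8\lambda^2\sin^2(\alpha/2)$, which exceeds $\ell^2$ for large $\lambda$.
\end{itemize}

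This also shows that your heuristic about short loops is wrong. The holonomy defect $|\Id-A|$ of a short loop is of order curvature times enclosed area, hence $\lesssim\ell^2$. That is the same order as the length cost. So the outcome is decided by the unbounded prefactor $|L_\gamma|\,|L_1-L_\gamma|$, not by the size of the loop. Short loops can already beat the restricted infimum.

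Consequently, the fiberwise reading over $\{x\}\times N$ is not a fallback; it is the only reading under which the corollary is true. It is also the reading the paper needs: the inequality \eqref{eq:estimate_that_should_have_been_easy} in Step~5 uses exactly the direction that fails for the full bundle. Under that reading your proof is complete and coincides with the paper's.
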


Let $P_E^\gamma : E_{(x, q_2)} \rightarrow E_{(x, q_1)}$ be the parallel transport map along $(x, \gamma)$ with respect to $\nabla$. The proof of the previous corollary depends on the following relation between $P_E^\gamma$ and $P_{TN}^\gamma$.

\begin{lemma} \label{lm:map_parallel_transport}
    Let $L \in T_x^*M \otimes T_{q_2}N$. Then for all $v \in T_xM$, we have 
    \[
    P_E^{\gamma}(L)v = P_{TN}^\gamma(Lv).
    \]
\end{lemma}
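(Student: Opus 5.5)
The plan is to identify the parallel transport along $(x,\gamma)$ in $E = T^*M\otimes TN$ by exhibiting an explicit parallel section along this curve with the correct value at $t=1$. Since $P_E^\gamma$ is characterized by uniqueness of parallel sections with prescribed endpoint, it suffices to find one.

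Fix $x\in M$ and a piecewise regular curve $\gamma:[0,1]\to N$ with $\gamma(0)=q_1$, $\gamma(1)=q_2$. The curve in $M\times N$ is $c(t)=(x,\gamma(t))$, with velocity $c'(t)=(0,\gamma'(t))$.

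\textbf{Step 1: the candidate section.} Choose a basis $v_1,\dots,v_d$ of $T_xM$ with dual basis $\omega^1,\dots,\omega^d$ of $T_x^*M$. Write $L=\sum_i \omega^i\otimes Lv_i$ with $Lv_i\in T_{q_2}N$. Let $X_i(t)$ be the $\nabla^h$-parallel vector field along $\gamma$ with $X_i(1)=Lv_i$, so that $X_i(0)=P_{TN}^\gamma(Lv_i)$. Define
\[
A(t):=\sum_i \omega^i\otimes X_i(t)\in E_{(x,\gamma(t))},
\]
where each $\omega^i$ is regarded as a constant covector along the constant curve $x$.

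\textbf{Step 2: $A$ is parallel along $c$.} Rule \eqref{eq:tensor_bundle_connection} is stated for global sections, so it must first be transferred to the covariant derivative along $c$. Work in a local frame: choose coordinate fields near $x$ with dual coframe $dx^a$, and a local frame $E_\beta$ of $TN$ near a point of $\gamma$. The products $dx^a\otimes E_\beta$ then form a local frame of $E$. By \eqref{eq:tensor_bundle_connection} together with the Leibniz rule along curves, for $A(t)=\sum a_{a\beta}(t)\,dx^a\otimes E_\beta$ we get
\[
\frac{DA}{dt}=\sum \Bigl(a'_{a\beta}\,dx^a\otimes E_\beta + a_{a\beta}\bigl(\nabla^g_{0}dx^a\otimes E_\beta + dx^a\otimes \nabla^h_{\gamma'}E_\beta\bigr)\Bigr).
\]
The term $\nabla^g_0\,dx^a$ vanishes because the $M$-component of $c'$ is zero. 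This gives the product rule
\[
\frac{D}{dt}(\omega\otimes X)=\omega\otimes \frac{D^hX}{dt}
\]
for a fixed covector $\omega$ at $x$ and a vector field $X$ along $\gamma$. Applying it to $A$ yields $\frac{DA}{dt}=\sum_i \omega^i\otimes \frac{D^hX_i}{dt}=0$.

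\textbf{Step 3: conclusion.} Since $A(1)=L$ and $A$ is parallel, $P_E^\gamma(L)=A(0)$. Then
\[
A(0)v=\sum_i\omega^i(v)\,P_{TN}^\gamma(Lv_i)=P_{TN}^\gamma\Bigl(L\sum_i\omega^i(v)v_i\Bigr)=P_{TN}^\gamma(Lv),
\]
using linearity of $P_{TN}^\gamma$. For piecewise regular $\gamma$, apply the argument on each smooth piece and compose the transports.

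The main obstacle is Step 2. It requires justifying that the connection defined on sections of $E\to M\times N$ induces a covariant derivative along curves satisfying the displayed product rule, in particular that the $T^*M$-factor contributes nothing when the $M$-component of the curve is constant. The local-frame computation above, together with the fact that $\nabla_{(Y,Z)}$ depends only on the pointwise value of $(Y,Z)$, handles this.
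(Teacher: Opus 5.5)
Your proposal is correct and follows essentially the same argument as the paper. Both build the parallel section $\sum \omega\otimes(\text{parallel field along }\gamma)$ and use the product rule from \eqref{eq:tensor_bundle_connection}, where the constant $M$-component of $(x,\gamma)$ kills the $T^*M$ term. The differences are cosmetic: you transport the vectors $Lv_i$ directly instead of expanding $L$ in a parallel frame $\omega_j\otimes X_i(1)$, and you justify the product rule along the curve more carefully than the paper, which simply takes $\frac{D\omega_j}{dt}=0$ for granted.
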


\begin{proof}
    The statement becomes a tautology once we write out all the definitions. Let $(v_j)_{j = 1}^m$ be a basis of $T_xM$ and let $(X_i(t))_{i = 1}^n$ be a basis of $T_{\gamma(t)}N$ for $t \in [0, 1]$. Assume that $X_i$ is parallel along $\gamma$. Denote the dual basis of $(v_j)$ by $(\omega_j)$. Then 
    \[
    (\omega_j \otimes X_i(t))_{i = 1, \dots, n}^{j = 1, \dots, m}
    \]
    is a basis for $T_x^*M \otimes T_{\gamma(t)}N$. We recall that $\omega_j \otimes X_i(t)$ is identified with the linear map taking $v_j$ to $X_i(t)$ and $v_k$ to $0$ for $k \neq j$. Then $L$ is identified with
    \[
    \sum_{i = 1}^n \sum_{j = 1}^m a_{ij} \, \omega_j \otimes X_i(1),
    \]
    where $a_{ij}$ satisfy
    \[
    Lv_j = \sum_{i = 1}^n a_{ij} X_i(1).
    \]
    By \eqref{eq:tensor_bundle_connection},
    \[
    \frac{D}{dt}(\omega_j \otimes X_i) = \frac{D\omega_j}{dt} \otimes X_i + \omega_j \otimes \frac{DX_i}{dt} = 0,
    \]
    so that $P_E^\gamma(L)$ is identified with
    \[
    \sum_{i = 1}^n \sum_{j = 1}^m a_{ij} \, \omega_j \otimes X_i(0).
    \]
    This implies
    \[
    P_E^\gamma(L)v_j = \sum_{i = 1}^n a_{ij} X_i(0) = P_{TN}^\gamma(Lv_j) \quad \text{for all } j.
    \]
    \end{proof}

\subsubsection{The connector operator in coordinates}

Let $(N, h)$ be an $n$-dimensional Riemannian manifold and let $\nabla^h$ be its Levi-Civita connection. In this section, we give an explicit representation of the connector operator in coordinates. Let $(U, \varphi)$ be a chart on $N$ and denote the local coordinates on $U$ by $(y_1, \dots, y_n)$. Set
\[
X_i(q) := \frac{\partial}{\partial y_i}\bigg|_q, \quad q \in U.
\]
The Christoffel symbols $\Gamma_{i, j}^k : U \rightarrow \mathbb{R}$ are given by
\[
\nabla^h_{X_i} X_j = \sum_{k = 1}^n \Gamma_{i, j}^k X_k.
\]
We can define coordinates on $TU \subset TN$ through the map
\[
(y, w) \in U \times \mathbb{R}^n\mapsto \sum_{i = 1}^n w_i X_i(\varphi^{-1}(y)) \in TU.
\]
Hence, we can represent any $\xi \in T_{(q, v)}TN$ with $q \in U$ in the form
\begin{equation} \label{eq:coordinates_double_tangent_bundle}
    \sum_{i = 1}^n Y_i \frac{\partial}{\partial y_i}\bigg|_{(q, v)} + \sum_{i = 1}^n W_i \frac{\partial}{\partial w_i}\bigg|_{(q, v)}
\end{equation}
for some $Y, W \in \mathbb{R}^n$.

Fix $(q, v) \in TN$ with $q \in U$ and fix $\xi \in T_{(q, v)}TN$. Let $\alpha : (-\varepsilon, \varepsilon) \rightarrow TN$ be a smooth curve with $\alpha(0) = (q, v)$ and $\alpha'(0) = \xi$. Let $\pi_N : TN \rightarrow N$ be the canonical projection and set $\gamma := \pi_{TN} \circ \alpha$.
If $(\tilde{y}(t), \tilde{w}(t))$ are the local coordinates for $\alpha(t)$, then (\cite[Page 56]{DC})
\begin{gather*}
    \frac{D\alpha}{dt}(0) = \sum_{k = 1}^n \left(\tilde{w}_k'(0) + \sum_{i, j = 1}^n \tilde{w}_j(0) \tilde{y}_i'(0) \Gamma_{i, j}^k(q) \right)X_k(q), \\
    \gamma'(0) = \sum_{i = 1}^n \tilde{y}_i'(0) X_i(q).
\end{gather*}
Furthermore, comparing \eqref{eq:coordinates_double_tangent_bundle} with the representation
\[
\alpha'(0) = \sum_{i = 1}^n \tilde{y}_i'(0) \frac{\partial}{\partial y_i}\bigg|_{(q, v)} + \sum_{i = 1}^n \tilde{w}_i'(0) \frac{\partial}{\partial w_i}\bigg|_{(q, v)}
\]
gives $\tilde{y}'(0) = Y$ and $w'(0) = W$. Hence, if we set $w := \tilde{w}(0)$, then, by \eqref{eq:connector_operator}, we obtain
\begin{equation} \label{eq:connector_operator_coordinates}
    K_{TN}(\xi) = \sum_{k = 1}^n \left(W_k + \sum_{i, j = 1}^n w_j Y_i \Gamma_{i, j}^k(q)\right) X_k(q),
\end{equation}
We also note that
\begin{equation} \label{eq:bundle_projection_coordinates}
    d\pi_N(\xi) = \sum_{i = 1}^n Y_i X_i(q),
\end{equation}
where $\pi_N : TN \rightarrow N$ is the bundle projection.

If $N = \mathbb{R}^n$, then we can consider the globally defined chart $(\mathbb{R}^n, \Id_{\mathbb{R}^n})$ and identify $TT\mathbb{R}^n$ with $\mathbb{R}^{4n}$ using the map
\begin{equation} \label{eq:double_tangent_bundle}
    (y, w, Y, W) \mapsto \sum_{i = 1}^n Y_i \frac{\partial}{\partial y_i}\bigg|_{(y, w)} + \sum_{i = 1}^n W_i \frac{\partial}{\partial w_i}\bigg|_{(y, w)}   
\end{equation}
In this case the connector operator is given by the map
\begin{equation} \label{eq:connector_operator_euclidean}
    K_{T\mathbb{R}^n}(y, w, Y, W) =  \sum_{k = 1}^n W_k X_k(y).
\end{equation}
Similarly, we have
\begin{equation} \label{eq:euclidean_bundle_projection}
    d\pi_{\mathbb{R}^n}(y, w, Y, W) = \sum_{i = 1}^n Y_i X_i(y).
\end{equation}

\subsection{Weakly differentiable maps between manifolds} \label{sec:weakly_differentiable}

In this section we give an intrinsic definition of weak differentiability for maps between manifolds using the notions developed by Convent and Van Schaftingen \cite{CVS}. We start the section by recalling the corresponding definitions for real valued functions on manifolds. Throughout the paper, we do not identify functions that agree almost everywhere.

Let $M$ be a smooth manifold. We say that $A \subset M$ is negligible, if for every $x \in A$, there exists a chart $(W, \psi)$ with $x \in W$ such that $|\psi(W \cap A)| = 0$. If a property holds outside of a negligible subset of $M$, then we say that it holds almost everywhere in $M$. We call $u : M \rightarrow \mathbb{R}$ weakly differentiable if for every chart $(W, \psi)$ the function $u \circ \psi^{-1}$ is weakly differentiable. We can define a local weak derivative $du :TW \rightarrow T\mathbb{R}$ by setting $du := d(u \circ \psi^{-1}) \circ d\psi$. If $(\tilde{W}, \tilde{\psi})$ is another chart, then the chain rule for weakly differentiable functions yields
\[
d(u \circ \psi^{-1})_{\psi(x)} \circ d\psi_x = d(u \circ \tilde{\psi}^{-1})_{\tilde{\psi}(x)} \circ d\tilde{\psi}_x \quad \text{for a.e. } x \in W \cap \tilde{W}.
\]
Hence, we can glue the local weak derivatives together to obtain a map $du : TM \rightarrow T\mathbb{R}$.

Let $N$ be a smooth manifold and let $u : M \rightarrow N$. If $(U, \varphi)$ is a chart on $N$, then the domain of $\varphi \circ u$ is not an open set in general. Hence, it does not make sense to define the weak differentiability of $u$ in terms of its compositions with charts on $N$. An alternative definition was provided by Convent and Van Schaftingen \cite{CVS}. Let $\pi_M : TM \rightarrow M$ and $\pi_N : TN \rightarrow N$ be the canonical projections. The idea is to consider compositions of $u$ with compactly supported smooth functions.

\begin{definition}[{\cite[Definition 1.1.]{CVS}}] \label{def:cwd}
    A map $u : M \rightarrow N$ is called colocally weakly differentiable (c.w.d.) if $f \circ u$ is weakly differentiable for all $f \in C_c^1(N)$. We call $du : TM \rightarrow TN$ a colocal weak derivative of $u$ if $\pi_N \circ du = u \circ \pi_M$ and
    \[
    d(f \circ u)_x = df_{u(x)} \circ du_x \quad \text{for a.e. } x \in M.
    \]
\end{definition}

All colocally weakly differentiable functions possess a unique colocal weak derivative.

\begin{theorem} [{\cite[Proposition 1.5]{CVS}}] \label{th:colocal_derivative_existence}
    If $u : M \rightarrow N$ is c.w.d., then it has a unique colocal weak derivative $du : TM \rightarrow TN$. Moreover, if $f \in C^1(N)$ and $f \circ u$ is weakly differentiable, then
    \[
    d(f \circ u)_p = df_{u(x)} \circ du_x\quad \text{for a.e. } x \in M.
    \]
\end{theorem}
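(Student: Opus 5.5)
We argue locally in charts on $N$, building the derivative patch by patch and then gluing; uniqueness is handled separately by a simple localization argument.

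\emph{Uniqueness.} Let $du$ and $\tilde{d}u$ both be colocal weak derivatives. Fix $x$ outside a negligible set and a chart $(U, \varphi)$ around $u(x)$. Choose finitely many $f_1, \dots, f_n \in C_c^1(N)$ that agree with the coordinate functions $\varphi^1, \dots, \varphi^n$ on a neighbourhood $V \csubset U$ of $u(x)$. Then for a.e. $x \in u^{-1}(V)$ we have $df_{i, u(x)} \circ du_x = d(f_i \circ u)_x = df_{i, u(x)} \circ \tilde{d}u_x$. The covectors $df_{i, u(x)}$ form a basis of $T^*_{u(x)}N$, so $du_x = \tilde{d}u_x$. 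Since $N$ is second countable, countably many such $V$ cover $N$, and therefore the two derivatives agree almost everywhere.

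\emph{Existence.} Cover $N$ by countably many relatively compact sets $V_m \csubset U_m$, where $(U_m, \varphi_m)$ are charts. Fix cut-offs $\chi_m \in C_c^\infty(U_m)$ with $\chi_m = 1$ on $V_m$, and set $f_m^i := \chi_m \varphi_m^i \in C_c^\infty(N)$. By the c.w.d. hypothesis each $f_m^i \circ u$ is weakly differentiable. On $A_m := u^{-1}(V_m)$ define $du_x$ as the unique linear map $T_xM \to T_{u(x)}N$ with $d\varphi_m(du_x) = (d(f_m^i \circ u)_x)_{i}$. Then make $A_m$ disjoint by setting $A_m' := A_m \setminus \bigcup_{l < m} A_l$, and define $du$ on each $A_m'$ using chart $m$.

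\emph{Verifying the defining identity.} For $f \in C_c^1(N)$ we must show $d(f \circ u)_x = df_{u(x)} \circ du_x$ a.e. on each $A_m$. Locally $f = F \circ (f_m^1, \dots, f_m^n)$ on $V_m$, where $F := f \circ \varphi_m^{-1}$ is extended to a $C^1$ function on $\mathbb{R}^n$. The map $w := (f_m^i \circ u)_i$ is weakly differentiable, but $f \circ u$ and $F \circ w$ agree only on $A_m$, which need not be open. This locality issue is the main obstacle. The tool to invoke is the standard fact that if two weakly differentiable functions agree on a measurable set, then their weak derivatives agree a.e. on that set. This follows from $d(g_1 - g_2) = 0$ a.e. on $\{g_1 = g_2\}$, which is a consequence of the Stampacchia-type property that $\nabla g = 0$ a.e. on level sets. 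Applied to $f \circ u$ and $F \circ w$, together with the Euclidean chain rule for $F \circ w$, this gives $d(f \circ u) = dF \circ dw = df \circ du$ a.e. on $A_m$. The same argument shows that the definitions from different charts $m, l$ agree a.e. on $A_m \cap A_l$, so the choice of disjointification does not matter.

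\emph{The "moreover" part.} For $f \in C^1(N)$ with $f \circ u$ weakly differentiable, write $f = \chi_m f + (1 - \chi_m) f$. Then $\chi_m f \in C_c^1$, and on $A_m$ we have $f \circ u = (\chi_m f) \circ u$. The same level-set locality argument then yields $d(f \circ u)_x = df_{u(x)} \circ du_x$ for a.e. $x \in A_m$. Since the $A_m$ cover $M$, this holds a.e. on $M$.
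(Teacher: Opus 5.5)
The paper does not prove this statement. It quotes it from \cite[Proposition 1.5]{CVS}, so there is no internal argument to compare yours against.

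Your proof is correct and self-contained. It is the natural argument, built on the same device the paper borrows from \cite[Lemma 1.6]{CVS} in Definition \ref{def:extension}: compactly supported functions that coincide with chart coordinates near a point. The other two ingredients are the Euclidean chain rule for $C^1$ functions with bounded gradient, and Stampacchia locality, i.e.\ $d(g_1 - g_2) = 0$ a.e.\ on $\{g_1 = g_2\}$.

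A few details are worth making explicit.

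\begin{itemize}
\item \emph{Measurability.} If you want $du$ to be a measurable bundle map, as \cite{CVS} require, then $A_m = u^{-1}(V_m)$ must be measurable. This follows from the c.w.d.\ hypothesis: every open $V \subset N$ is a countable union of sets $\{f > 0\}$ with $0 \le f \in C_c^1(N)$, and each $f \circ u$ is measurable.
\item \emph{Measurability is not needed for the identities.} Stampacchia's lemma already gives vanishing of the derivative a.e.\ on the measurable set $\{g_1 = g_2\} \supset A_m$.
\item \emph{Extending $F$.} Multiply $f \circ \varphi_m^{-1}$ by a cutoff equal to $1$ near the compact set $\varphi_m(\overline{V_m})$. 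This gives an extension in $C_c^1(\mathbb{R}^n)$, whose bounded gradient is what the Euclidean chain rule needs.
\item \emph{Chart consistency is not needed.} Your consistency check on $A_m \cap A_l$ can be dropped. On $A_m' \subset A_m$ the derivative $du$ is defined through chart $m$, so the verification on $A_m$ already gives the identity a.e.\ on $A_m'$, and the sets $A_m'$ cover $M$.
\item \emph{The ``moreover'' part.} The final step uses $d(\chi_m f) = df$ on the open set $V_m$, where $\chi_m \equiv 1$.
\end{itemize}
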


Let $\tilde{N}$ be a smooth manifold. The next two propositions state that compositions and Cartesian products of c.w.d. maps can be computed in the expected manner.

\begin{proposition} [{\cite[Proposition 1.8]{CVS}}] \label{pr:chain_rule}
    Let $u : M \rightarrow N$ be c.w.d. and let $v \in C^1(N; \tilde{N})$. If $v \circ u$ is c.w.d., then
    \[
    d(v \circ u)_x = dv_{u(x)} \circ du_x \quad \text{for a.e. } x \in M.
    \]
\end{proposition}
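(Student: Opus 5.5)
The plan is to reduce the statement to the defining property of the colocal weak derivative of $v \circ u$ (Definition \ref{def:cwd}) together with the uniqueness and extended chain rule of Theorem \ref{th:colocal_derivative_existence}. Since $v \circ u$ is c.w.d., Theorem \ref{th:colocal_derivative_existence} gives a unique colocal weak derivative $d(v\circ u)$. It therefore suffices to show that the candidate map $L := dv \circ du : TM \to T\tilde N$ satisfies the two defining conditions of a colocal weak derivative of $v \circ u$. Uniqueness will then give $d(v\circ u)_x = dv_{u(x)}\circ du_x$ for a.e. $x$.

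The first condition, $\pi_{\tilde N}\circ L = (v\circ u)\circ \pi_M$, is immediate. Indeed, $du_x$ maps $T_xM$ into $T_{u(x)}N$ because $\pi_N \circ du = u\circ\pi_M$, and $dv_{u(x)}$ maps $T_{u(x)}N$ into $T_{v(u(x))}\tilde N$.

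For the second condition, fix $f \in C^1_c(\tilde N)$. Since $v\circ u$ is c.w.d., the function $f\circ v\circ u$ is weakly differentiable, and by Definition \ref{def:cwd}
\[
d(f\circ v\circ u)_x = df_{v(u(x))}\circ d(v\circ u)_x \quad\text{for a.e. } x.
\]
This identity is not yet the one we need, because it involves $d(v\circ u)$ rather than $L$. Instead, regard $f\circ v$ as a function on $N$. It belongs to $C^1(N)$, though not necessarily to $C^1_c(N)$, since $v$ need not be proper. Moreover $(f\circ v)\circ u$ is weakly differentiable, as just noted. Hence the second part of Theorem \ref{th:colocal_derivative_existence}, which applies precisely to $C^1$ functions without compact support, yields
\[
d(f\circ v\circ u)_x = d(f\circ v)_{u(x)}\circ du_x = df_{v(u(x))}\circ dv_{u(x)}\circ du_x\quad\text{for a.e. } x,
\]
using the classical chain rule for the $C^1$ maps $f$ and $v$. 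Thus $d(f\circ(v\circ u))_x = df_{v(u(x))}\circ L_x$ a.e. for every $f\in C^1_c(\tilde N)$, which is the second defining condition. So $L$ is a colocal weak derivative of $v\circ u$.

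By uniqueness in Theorem \ref{th:colocal_derivative_existence}, $L$ agrees with $d(v\circ u)$ almost everywhere. If uniqueness is understood only up to null sets, that is exactly the a.e. statement claimed. The main subtle point is the one handled above: $f\circ v$ lacks compact support, so we must invoke the strengthened second part of Theorem \ref{th:colocal_derivative_existence} rather than Definition \ref{def:cwd} directly. Beyond that the argument is formal.
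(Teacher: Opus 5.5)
Your proof is correct. The key step is applying the second part of Theorem~\ref{th:colocal_derivative_existence} to $f\circ v\in C^1(N)$, which need not have compact support. This shows that $dv\circ du$ is a colocal weak derivative of $v\circ u$, and uniqueness up to null sets then gives the claim.

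The paper gives no proof of its own here and simply defers to \cite[Proposition 1.8]{CVS}. Your reduction (second part of that theorem, then uniqueness) is the standard argument for this statement.
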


\begin{proposition} [{\cite[Proposition 1.10]{CVS}}] \label{pr:product_map_derivative}
    If $u_1 : M \rightarrow N$ and $u_2 : M \rightarrow \tilde{N}$ are c.w.d., then so is $u := (u_1, u_2) : M \rightarrow N \times \tilde{N}$, and
    \[
    du_x = (d(u_1)_x, d(u_2)_x) \quad \text{for a.e. } x \in M.
    \]
\end{proposition}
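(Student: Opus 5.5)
The proposition to prove is Proposition \ref{pr:product_map_derivative}: if $u_1 : M \to N$ and $u_2 : M \to \tilde N$ are c.w.d., then $u = (u_1,u_2)$ is c.w.d. and its colocal weak derivative is $(d(u_1)_x, d(u_2)_x)$. The plan is to check c.w.d.\ of $u$ directly from Definition \ref{def:cwd}, using only compositions with compactly supported functions on each factor. Then we identify the derivative through uniqueness in Theorem \ref{th:colocal_derivative_existence}.

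\textbf{Step 1 (separated functions).} First take $f \in C^1_c(N \times \tilde N)$ of the form $f(y,z) = a(y)b(z)$ with $a \in C^1_c(N)$ and $b \in C^1_c(\tilde N)$. By assumption $a\circ u_1$ and $b \circ u_2$ are weakly differentiable. They are also bounded, so in a chart on $M$ the product rule for bounded $W^{1,1}_{loc}$ functions gives that $f \circ u = (a\circ u_1)(b\circ u_2)$ is weakly differentiable. Its derivative is
\[
(b\circ u_2)\, da_{u_1}\circ d(u_1) + (a\circ u_1)\, db_{u_2}\circ d(u_2) = df_{u} \circ (d(u_1), d(u_2)) \quad\text{a.e.}
\]

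\textbf{Step 2 (general $f$).} Extending to arbitrary $f \in C^1_c(N\times\tilde N)$ is the main obstacle, since such $f$ is not a finite sum of products. I would proceed by localization and approximation. Fix a chart $(W,\psi)$ on $M$ with $W$ relatively compact. Choose a finite cover of $\supp f$ by product chart domains $U_i \times \tilde U_i$, with a subordinate partition of unity of product form. In each product chart, approximate the localized piece of $f$ in $C^1$ by finite sums of products $a(y)b(z)$, using polynomials in the coordinates times cut-offs; this is Stone--Weierstrass for $C^1$ on compact sets. This yields $f_m \to f$ in $C^1$, uniformly, with all supports inside a fixed compact set.

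By Step 1, each $f_m\circ u$ is weakly differentiable with derivative $df_m(u)\circ(d(u_1),d(u_2))$. We have $f_m\circ u \to f\circ u$ uniformly. The derivatives converge pointwise a.e.\ and are dominated by $C(|d(u_1)| + |d(u_2)|)$ with $C$ uniform, at least on the set where $u$ lies in the fixed compact support. Off that set all terms vanish. Some care is needed because $|d(u_i)|$ is measured with a metric; I would fix auxiliary Riemannian metrics on $N$ and $\tilde N$.

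One must still check that $|d(u_i)|$ is locally integrable on $\{u_i \in K\}$ for compact $K$. This follows by composing $u_i$ with finitely many compactly supported functions that restrict to coordinates on $K$. Then $d(\text{coordinate}\circ u_i)$ equals the coordinate expression of $d(u_i)$ on $\{u_i\in K\}$, and it is in $L^1_{loc}$ because these compositions are weakly differentiable.

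Passing to the limit in the weak-derivative identity, by dominated convergence, shows that $f\circ u$ is weakly differentiable with $d(f\circ u) = df_u\circ(d(u_1),d(u_2))$ a.e. Two technical points remain. The exceptional null set depends on $f$, but the definition only asks for an a.e.\ identity for each $f$. The product rule in Step 1 needs only $L^\infty\cap W^{1,1}_{loc}$.

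\textbf{Step 3.} Thus $u$ is c.w.d. The map $(d(u_1),d(u_2))$ satisfies $\pi\circ (d(u_1),d(u_2)) = u\circ\pi_M$, and by Step 2 it satisfies the defining identity of Definition \ref{def:cwd}. By the uniqueness in Theorem \ref{th:colocal_derivative_existence}, $du_x = (d(u_1)_x, d(u_2)_x)$ for a.e.\ $x$.
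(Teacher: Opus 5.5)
The paper does not prove this proposition; it is quoted from \cite[Proposition 1.10]{CVS}, so there is no in-paper argument to compare yours with. Your proof is correct.

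\begin{itemize}
\item The Leibniz rule for bounded $W^{1,1}_{\mathrm{loc}}$ functions handles the tensor products $a(y)b(z)$.
\item The $C^1$-density of finite sums of such products is standard. This is among $C^1$ functions supported in a fixed compact subset of a product chart. Strictly speaking it is not Stone--Weierstrass, which is a $C^0$ statement, but mollification followed by polynomial approximation of the kernel gives it.
\item You correctly isolate and verify the one point that needs a real argument: $|d(u_i)|\,\mathbf{1}_{\{u_i\in K_i\}}\in L^1_{\mathrm{loc}}(M)$. This is what makes the dominated-convergence step legitimate.
\item The final identification through uniqueness in Theorem~\ref{th:colocal_derivative_existence} is fine, since Definition~\ref{def:cwd} only asks for the identity a.e.\ for each fixed $f$.
\end{itemize}

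Since you already use compactly supported maps that are diffeomorphisms near a given compact set (the device of Definition~\ref{def:extension}), you could skip the approximation step. Take $\phi_i\in C^1_c(N;\mathbb{R}^n)$ and $\tilde\phi_i\in C^1_c(\tilde N;\mathbb{R}^{\tilde n})$ that are diffeomorphisms on open sets $W_i\supset\overline{V_i}$ and $\tilde W_i\supset\overline{\tilde V_i}$. Split $f=\sum_i f_i$ with $\supp f_i\subset V_i\times\tilde V_i$, and write
\[
f_i(y,z)=g_i\bigl(\phi_i(y),\tilde\phi_i(z)\bigr)\,\chi_i(y)\,\tilde\chi_i(z).
\]
Here $g_i\in C^1_c(\mathbb{R}^n\times\mathbb{R}^{\tilde n})$ is $f_i\circ(\phi_i|_{W_i}\times\tilde\phi_i|_{\tilde W_i})^{-1}$ extended by zero. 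The cut-offs $\chi_i\in C^1_c(W_i)$ and $\tilde\chi_i\in C^1_c(\tilde W_i)$ equal $1$ on $\overline{V_i}$ and $\overline{\tilde V_i}$. They make this a global identity even though $\phi_i$ and $\tilde\phi_i$ are not injective.

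Then $f_i\circ u$ is a product of bounded weakly differentiable functions. The first factor is handled by the Euclidean chain rule for the bounded $W^{1,1}_{\mathrm{loc}}$ map $(\phi_i\circ u_1,\tilde\phi_i\circ u_2)$. The formula for $d(f\circ u)$ follows by differentiating the pointwise identity on $N\times\tilde N$, and local integrability of the derivative comes for free.

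Your route replaces the vector-valued Euclidean chain rule with the Leibniz rule, a density argument and a limit passage. Both are sound; the chart route is shorter.
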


\subsubsection{Homogeneous Sobolev spaces}

Let $(M, g)$ and $(N, h)$ be connected Riemannian manifolds. The Riemannian metric allows us to generalize $L^p$-integrability to maps between manifolds.

\begin{definition}
    Let $1 < p < \infty$. We say that a c.w.d. map $u : M \rightarrow N$ belongs to the homogeneous Sobolev space $\dot{W}^{1, p}(M; N)$ if $|du|_{g, h} \in L^p(M)$.
\end{definition}

Let $\sigma$ denote the Sasaki metric on $(T^*M \otimes TN, \pi, M \times N)$. We define
\[
d_{\sigma, p}(u, v) := \left(\int_M d_\sigma(du_x, dv_x)^p \, d\vol_g(x)\right)^\frac{1}{p}, \quad u, v \in \dot{W}^{1, p}(M; N).
\]
If $(u_k)_{k \in \mathbb{N}} \in \dot{W}^{1, p}(M; N)$, then we say that $(u_k)$ converges to $u$ in $\dot{W}^{1, p}(M; N)$ if $d_{\sigma, p}(u_k, u) \to 0$. 

\begin{proposition} [{\cite[Proposition 4.2]{CVS}}] \label{pr:completeness_of_sobolev_space}
    If $N$ is complete, then the metric space $(\dot{W}^{1, p}(M; N), d_{\sigma, p})$ is complete.
\end{proposition}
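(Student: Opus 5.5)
We plan to argue in the standard Cauchy-sequence fashion, using the completeness of $N$ and of the Sasaki bundle $(T^*M\otimes TN,\sigma)$, together with the closure properties of colocal weak differentiability from \cite{CVS}.

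\textbf{Extracting a pointwise limit.} Let $(u_k)$ be Cauchy for $d_{\sigma,p}$. By Proposition \ref{pr:triangle_inequality} and the Sasaki distance formula, $d_h(u_k(x),u_l(x))\le d_\sigma(d(u_k)_x,d(u_l)_x)$, since the projection $\pi$ is $1$-Lipschitz from $(E,\sigma)$ to $(M\times N)$ and the base point $x$ is fixed. We pass to a subsequence with $d_{\sigma,p}(u_{k_j},u_{k_{j+1}})\le 2^{-j}$. Then $\sum_j d_\sigma(d(u_{k_j})_x,d(u_{k_{j+1}})_x)<\infty$ for a.e. $x$, because the $L^p$ norm of the sum is finite. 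So for a.e. $x$ the sequence $d(u_{k_j})_x$ is Cauchy in the Sasaki metric. Fibres over $\{x\}\times N$ form a complete metric space because $N$ is complete: the base points converge in $N$, and the fibre parts converge after parallel transport along short geodesics, by Corollary \ref{co:maps_sasaki_distance}. Hence there exist $u(x)\in N$ and $L_x\in T_x^*M\otimes T_{u(x)}N$ with $u_{k_j}(x)\to u(x)$ and $d(u_{k_j})_x\to L_x$. We set $u(x)=q_0$ on the null set where this fails.

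\textbf{Identifying $L$ as the colocal derivative.} Fix $f\in C^1_c(N)$. Then $f\circ u_{k_j}\to f\circ u$ pointwise a.e., and we claim $d(f\circ u_{k_j})=df\circ d u_{k_j}\to df_{u}\circ L$ in $L^p_{loc}$, using Theorem \ref{th:colocal_derivative_existence} for the equality. The map $\Phi_f:E\to T^*M$, $L\mapsto df_{\pi_N(L)}\circ L$, is globally Lipschitz with respect to $\sigma$. This is because $df$ is compactly supported and smooth, so comparing via parallel transport (Corollary \ref{co:maps_sasaki_distance}) gives
\[
|\Phi_f(L_1)-\Phi_f(L_2)|\le C_f\,(|L_1-L_\gamma|+\ell(\gamma)|L_2|).
\]
The term $\ell(\gamma)|L_2|$ is not uniformly Lipschitz. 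However, $|\Phi_f(L)|\le \|df\|_\infty |L|$ together with $|d u_{k_j}|$ bounded in $L^p$ gives equi-integrability, and the pointwise convergence $d_\sigma\to0$ yields convergence a.e. Vitali's theorem then gives $L^p_{loc}$ convergence, and $f\circ u_{k_j}$ is bounded. Thus $f\circ u$ is weakly differentiable with $d(f\circ u)=df_u\circ L$, so $u$ is c.w.d. and $du=L$ a.e. by uniqueness in Theorem \ref{th:colocal_derivative_existence}. Also $|du|=\lim|du_{k_j}|$ is in $L^p$ by Fatou, since $|L|_{g,h}$ is $1$-Lipschitz with respect to $\sigma$ (Proposition \ref{pr:triangle_inequality}). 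Hence $u\in\dot W^{1,p}(M;N)$.

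\textbf{Convergence in $d_{\sigma,p}$.} By Fatou,
\[
d_{\sigma,p}(u_{k_j},u)^p\le\liminf_l\int_M d_\sigma(du_{k_j},du_{k_l})^p\,d\vol_g\to0 \quad\text{as } j\to\infty,
\]
and a Cauchy sequence with a convergent subsequence converges. The main obstacle is the identification step. The subtle points are the completeness of the Sasaki bundle fibres over a fixed $x$ and the passage to the limit in $df_{u_k}\circ du_k$, where the base points move. I would handle both via the parallel-transport characterization, using geodesics between nearby points, which exist by completeness (Hopf–Rinow).
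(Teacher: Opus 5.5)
Your argument is correct. The paper gives no proof of its own and simply cites \cite[Proposition 4.2]{CVS}; your route (fast Cauchy subsequence, a.e.\ convergence of $d(u_{k_j})_x$ in the complete fibre $T_x^*M\otimes TN$, a closure argument for $f\circ u_{k_j}$ to identify the limit as the colocal weak derivative, then Fatou) is the standard one and, as far as I recall, matches the cited argument.

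There is one slip to repair, in the identification step. $\Phi_f$ is merely continuous, not Lipschitz, and continuity is all you actually use. Also, $L^p$-boundedness of $|du_{k_j}|_{g,h}$ with $p>1$ gives, through Vitali, only $L^1_{\mathrm{loc}}$ convergence of $df_{u_{k_j}}\circ du_{k_j}$, not $L^p_{\mathrm{loc}}$ convergence. $L^1_{\mathrm{loc}}$ convergence already suffices to identify $d(f\circ u)=df_u\circ L$. If you want $L^p_{\mathrm{loc}}$ convergence, use $\bigl||du_{k_j}|_{g,h}-|du_{k_l}|_{g,h}\bigr|\le d_\sigma(du_{k_j},du_{k_l})$: this makes $|du_{k_j}|_{g,h}$ Cauchy in $L^p$ and hence $p$-equi-integrable.
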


For homogeneous Sobolev spaces, we can prove a compactness result similar to the Rellich-Kondrachov theorem in the Euclidean setting. To this end, we require two preliminary results that are of independent interest as well. The following lemma and its proof is adapted from \cite[Proposition 3.8]{CVS}.

\begin{lemma} \label{lm:closure_under_pointwise_convergence}
    Let $(u_k) \subset \dot{W}^{1, p}(M; N)$ be a sequence. Assume that there exists $C > 0 $ such that
    \begin{equation} \label{eq:completeness_of_cwd_functions}
        \int_M |du_k|_{g, h}^p \, d\vol_g \le C \quad \text{for all } k \in \mathbb{N}.
    \end{equation}
    Furthermore, assume that $u_k \to u$ pointwise a.e. in $M$. Then $u \in \dot{W}^{1, p}(M; N)$ and
    \[
    \int_M |du|_{g, h}^p \, d\vol_g \le \liminf_{k \to \infty} \int_M |du_k|_{g, h}^p \, d\vol_g.
    \]
\end{lemma}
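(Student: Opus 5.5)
The plan is to reduce to real-valued functions by composing with compactly supported test functions on $N$, following the strategy of \cite[Proposition 3.8]{CVS}. Fix $f \in C_c^1(N)$. By Proposition \ref{pr:chain_rule} (or Theorem \ref{th:colocal_derivative_existence}), $f \circ u_k$ is weakly differentiable with $d(f\circ u_k)_x = df_{u_k(x)} \circ d(u_k)_x$ for a.e. $x$. Hence $|d(f \circ u_k)|_g \le \|df\|_\infty |du_k|_{g,h}$, which is bounded in $L^p$ by \eqref{eq:completeness_of_cwd_functions}. Also $f \circ u_k$ is bounded by $\|f\|_\infty$ and converges pointwise a.e. to $f \circ u$.

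In a chart $(W,\psi)$ with $\psi(W)$ bounded and $g$ comparable to the Euclidean metric, the functions $f \circ u_k \circ \psi^{-1}$ are therefore bounded in $W^{1,p}$. Since $p>1$, reflexivity gives a weakly convergent subsequence in $W^{1,p}$. The limit must be $f\circ u\circ\psi^{-1}$ by dominated convergence. Covering $M$ by countably many such charts shows that $f\circ u$ is weakly differentiable, so $u$ is c.w.d. Theorem \ref{th:colocal_derivative_existence} then provides the colocal derivative $du$.

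The main step is to get the lower semicontinuous bound on $|du|_{g,h}$ itself, since we only control derivatives after composing with test functions. Two observations make this work. First, pass to a subsequence realizing the $\liminf$ of $\int_M |du_k|^p_{g,h}$. Second, for every $f\in C_c^1(N)$ with $|\nabla f|_h \le 1$, the composition satisfies $|d(f\circ u_k)|_g \le |du_k|_{g,h}$ pointwise.

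The bounded sequence $|du_k|_{g,h}$ converges weakly in $L^p(M)$ after a subsequence, to some $G$ with $\|G\|_p \le \liminf \|du_k\|_p$. For fixed $f$ and any nonnegative $\phi \in L^{p'}$, we have $\int \phi\, |d(f\circ u_k)|_g \le \int \phi\, |du_k|_{g,h}$. Weak lower semicontinuity of the convex functional $w \mapsto \int\phi|w|$ under the weak convergence $d(f\circ u_k)\rightharpoonup d(f\circ u)$ then gives $\int \phi\, |d(f\circ u)|_g \le \int \phi\, G$. Hence $|df_{u(x)}\circ du_x|_g \le G(x)$ a.e.

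It remains to recover $|du_x|_{g,h}$ from such quantities. Choose a countable family $\{f_j\}$ of $1$-Lipschitz $C^1_c$ functions on $N$ that approximately realize any covector. Concretely, around each point of a countable dense set of $N$, take localized coordinate-like functions: for each $q$ and each unit $\eta\in T_q^*N$ in a countable dense set, there is $f_j$ with $df_j(q)$ close to $\eta$ and $|df_j|\le 1+\delta$.

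One technical point is that the Frobenius norm is not a sup over single covectors. I will therefore use vector-valued test maps. The Frobenius norm of $L:T_xM\to T_qN$ equals $|F\circ L|$ for $F$ a local isometric-at-$q$ map into $\mathbb{R}^{d+1}$ (for instance normal coordinates cut off by a function equal to $1$ near $q$). Applying the argument above componentwise to $F=(f^1,\dots,f^{d+1})$ gives $|dF_{u}\circ du|\le G$ a.e. on $\{u \in B_r(q)\}$, up to a factor $(1+\delta)$ where $|dF|$ is nearly isometric. Taking a countable cover of $N$ by such balls and letting $\delta\to0$ along a countable sequence yields $|du|_{g,h}\le G$ a.e.

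Integrating gives $\int_M|du|^p_{g,h}\le \int_M G^p\le\liminf_k\int_M|du_k|^p_{g,h}$, which is finite. So $u\in\dot W^{1,p}(M;N)$ and the stated inequality holds.
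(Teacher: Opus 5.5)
Your proof is correct and follows the paper's route: weak differentiability of $f\circ u$ from uniform $W^{1,p}$ bounds in charts plus dominated convergence, a subsequence realizing the $\liminf$ along which $|du_k|_{g,h}\rightharpoonup G$, and the bound $|D(F\circ u)|\le \operatorname{Lip}(F)\,G$ for compactly supported test maps $F$. The only real difference is that the paper goes from these bounds to $|du|_{g,h}\le G$ by citing \cite[Proposition 2.2]{CVS}, while you sketch that step yourself with cut-off near-isometric coordinates.

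That sketch needs two fixes. First, apply the lower-semicontinuity argument to the vector-valued $D(F\circ u_k)$ as a whole, since componentwise bounds lose a factor $\sqrt{d+1}$. Second, as written it gives the global factor $\sup_N |dF|_{\mathrm{op}}$, not the local $(1+\delta)$ you claim. You can either choose the cut-off with a radial profile of slope at most $1$, so that this supremum is $\le 1+\delta$. Or you can localize to $|D(F\circ u)|\le |dF_u|_{\mathrm{op}}\,G$, using $u_k\to u$ a.e., continuity of $dF$, and the strong $L^{p'}$ convergence of $\phi\,|dF_{u_k}|_{\mathrm{op}}$ against $|du_k|_{g,h}\rightharpoonup G$.
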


\begin{proof}
    Let $f \in C_c^1(N)$. We show that $f \circ u$ is weakly differentiable on $M$. Let $(W, \psi)$ be a chart on $M$ and set $\tilde{u}_k := f \circ u_k \circ \psi^{-1}$, $\tilde{u} := f \circ u \circ \psi^{-1}$. Then $\tilde{u}_k \in W_\text{loc}^{1, p}(W)$. Since $f$ is bounded and $u_k \to u$ pointwise a.e. in $M$, the dominated convergence theorem implies that $\tilde{u}_k \to \tilde{u}$ in $L^p(W)$. In fact, more generally, we have $f \circ u_k \to f \circ u$ in $L^p(M)$. As $W_\text{loc}^{1, p}(W)$ is closed under weak convergence, it follows from \eqref{eq:completeness_of_cwd_functions} that $\tilde{u} \in W_\text{loc}^{1, p}(W)$. Hence, $f \circ u$ is weakly differentiable.

    Let $(u_{k_l})_{l \in \mathbb{N}}$ be a subsequence such that
    \[
    \lim_{l \to \infty} \int_M |du_{k_l}|_{g, h}^p \, d\vol_g = \liminf_{k \to \infty} \int_M |du_k|_{g, h}^p \, d\vol_g,
    \]
    and assume without loss of generality that $|du_{k_l}|_{g, h} \rightharpoonup w$ in $L^p(M)$ for some $w \in L^p(M)$. Let 
    \[
    F \in C_c^1(N; \mathbb{R}^n), \quad v \in C_c^\infty(M; TM), \quad w \in C_c^\infty(M; \mathbb{R}^n).
    \]
    We observe that
    \begin{equation} \label{eq:divergence_trick}
        \int_M (D(F \circ u)(v), w) \, d\vol_g = \sum_{i = 1}^n \int_M D(F_i \circ u)(w_i v) \, d\vol_g = - \sum_{i = 1}^n \int_M (F_i \circ u) \operatorname{div}(w_i v) \, d\vol_g.
    \end{equation}
    Clearly, the equalities hold for $u_k$ as well. Therefore, replacing $u$ by $u_{k_l}$ on the right-hand side of \eqref{eq:divergence_trick} and taking $l \to \infty$, we get
    \begin{equation} \label{eq:weak_convergence_with_vectors}
        \int_M (D(F \circ u)(v), w) \, d\vol_g = \lim_{l \to \infty} \int_M (D(F \circ u_{k_l})(v), w) \, d\vol_g.
    \end{equation}
    If $L \in C_c^\infty(M; T^*M \otimes \mathbb{R}^n)$, then it similarly follows from \eqref{eq:weak_convergence_with_vectors} that
    \begin{equation} \label{eq:weak_convergence_with_transformations}
        \int_M (D(F \circ u), L)_{g, \e_n} \, d\vol_g = \lim_{l \to \infty} \int_M (D(F \circ u_{k_l}), L)_{g, \e_n} \, d\vol_g.
    \end{equation}
    Define the Lipschitz constant $\operatorname{Lip}(F)$ by
    \[
    \operatorname{Lip}(F) := \inf \left\{\frac{|F(x) - F(z)|}{d_g(x, z)} : x, z \in M, x \neq z\right\}.
    \]
    Then $|(D(F \circ u_{k_l}), L)_{g, \e_n}| \le \operatorname{Lip}(F) |du_{k_l}|_{g, h} |L|_{g, \e_n}$, and by the weak convergence of $(|du_{k_l}|_{g, h})$, we have
    \[
    \left|\int_M (D(F \circ u), L)_{g, \e_n} \, d\vol_g\right| \le \operatorname{Lip}(F) \int_M w|L|_{g, \e_n} \, d\vol_g.
    \]
    Since $L$ is arbitrary, we conclude that $|D(F \circ u)|_{g, \e_n} \le \operatorname{Lip}(F) w$ a.e. in $M$. At this point, we refer to Proposition 2.2. in \cite{CVS} to deduce that $|du|_{g, h} \le w$ a.e. in $M$. Therefore,
    \[
    \int_M |du|_{g, h}^p \, d\vol_g \le \int_M w \, d\vol_g \le \liminf_{l \to \infty} \int_M |du_{k_l}|_{g, h}^p \, d\vol_g = \liminf_{k \to \infty} \int_M |du_k|_{g, h}^p \, d\vol_g
    \]
    This concludes the proof.
\end{proof}

\begin{proposition}[{\cite[Proposition 2.6]{CVS}}] \label{pr:lipschitz_composition}
    Let $u : M \rightarrow N$ be a c.w.d. map and let $f : N \rightarrow \mathbb{R}$ be a Lipschitz function. Assume that there exists $q \in N$ such that $d_h(u, q)$ and $|du|_{g, h}$ are locally integrable in $M$. Then $f \circ u$ is weakly differentiable and
    \[
    |d(f \circ u)_x|_{g, \e_d} \le \operatorname{Lip}(f)|du_x|_{g, h} \quad \text{for a.e. } x \in M.
    \]
\end{proposition}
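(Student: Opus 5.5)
The plan is to prove the statement in two approximation steps: first in the smooth class $f \in C^1(N)$ with $|df|_h$ bounded, and then for general Lipschitz $f$ by regularization. Both steps rely on one fact: if $g_j \to g$ in $L^1_{\mathrm{loc}}(M)$ and $d g_j$ converges weakly in $L^1_{\mathrm{loc}}$, then $g$ is weakly differentiable and its derivative is the weak limit. This follows from testing against $\operatorname{div}(w v)$ as in \eqref{eq:divergence_trick}. The hypothesis that $d_h(u,q)$ is locally integrable enters through the bound
\[
|f\circ u| \le |f(q)| + \operatorname{Lip}(f)\, d_h(u,q),
\]
which makes $f\circ u$ locally integrable.

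\emph{Step 1: $f\in C^1(N)$ with $|df|_h \le L$.}
\begin{itemize}
\item Choose cut-offs $\eta_j \in C_c^1(N)$ with $0\le\eta_j\le1$, $\eta_j = 1$ on $\{d_h(\cdot,q)\le j\}$, $\supp \eta_j \subset \{d_h(\cdot,q)\le 2j+1\}$ and $|d\eta_j|_h \le 2/j$. A smoothing of $\theta(d_h(\cdot,q)/j)$ for a fixed piecewise linear $\theta$ works, provided closed balls in $N$ are compact. That holds in the complete setting of this paper.
\item Then $(f\eta_j)\circ u$ is weakly differentiable by Definition \ref{def:cwd}, and Theorem \ref{th:colocal_derivative_existence} gives
\[
d((f\eta_j)\circ u) = \eta_j(u)\, df_u\circ du + f(u)\, d\eta_j{}_u \circ du .
\]
\item As $j\to\infty$, $(f\eta_j)\circ u \to f\circ u$ in $L^1_{\mathrm{loc}}$ by dominated convergence, using the bound on $|f\circ u|$ above.
\item The first term of the derivative converges to $df_u\circ du$ in $L^1_{\mathrm{loc}}$, since it is dominated by $L|du|_{g,h}$.
\item The second term is supported where $d_h(u,q) \ge j$. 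There it is bounded by
\[
\tfrac{2}{j}\bigl(|f(q)| + L(2j+1)\bigr)|du|_{g,h} \le C\,|du|_{g,h},
\]
and since the sets $\{d_h(u,q)\ge j\}$ shrink to a null set, it tends to $0$ in $L^1_{\mathrm{loc}}$ by dominated convergence.
\item The closedness fact then gives $d(f\circ u) = df_u\circ du$, hence $|d(f\circ u)| \le L\,|du|_{g,h}$.
\end{itemize}

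\emph{Step 2: general Lipschitz $f$.}
\begin{itemize}
\item By Greene--Wu type smoothing on Riemannian manifolds, pick $f_\varepsilon\in C^1(N)$ with $|f_\varepsilon - f|\le\varepsilon$ and $|df_\varepsilon|_h \le \operatorname{Lip}(f)+\varepsilon$.
\item By Step 1, $f_\varepsilon\circ u \to f\circ u$ uniformly.
\item The derivatives $d(f_\varepsilon\circ u)$ are dominated by the fixed $L^1_{\mathrm{loc}}$ function $(\operatorname{Lip}(f)+1)|du|_{g,h}$, so they are locally equi-integrable.
\item By Dunford--Pettis, a subsequence converges weakly in $L^1_{\mathrm{loc}}$. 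The closedness fact shows that the limit is $d(f\circ u)$.
\item The constraint $|T| \le (\operatorname{Lip}(f)+\delta)|du|_{g,h}$ defines a convex, strongly closed set. It is therefore weakly closed, so the bound passes to the limit for every $\delta>0$, giving the claimed estimate.
\end{itemize}

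\emph{Main obstacle.} The delicate point is the cut-off construction in Step 1. Compactly supported cut-offs with uniformly small gradients require compact closed balls, i.e.\ completeness of $N$, which holds in our setting. Without completeness, I would instead localize: use an exhaustion of $N$ by compact sets together with the local integrability of $d_h(u,q)$ to control the region where the gradient of the cut-off is large. I would also check that the Greene--Wu approximation keeps the Lipschitz constant arbitrarily close to $\operatorname{Lip}(f)$.
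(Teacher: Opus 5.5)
The paper does not prove this statement; it is quoted from \cite[Proposition 2.6]{CVS}, so there is no in-paper route to compare yours with. Your argument is correct when $N$ is complete. The cut-offs $\eta_j$ and the c.w.d.\ chain rule handle $f\in C^1(N)$ with $|df|_h\le L$: the term $f(u)\,d\eta_j\circ du$ is dominated by $C|du|_{g,h}$ and supported in $\{d_h(u,q)>j\}$. For general Lipschitz $f$, the Lipschitz-preserving smoothing (which needs no completeness), Dunford--Pettis, and the weak closedness of the convex pointwise constraint finish the argument. Completeness holds wherever the paper uses the proposition: Theorem \ref{thm:rellich_kondrachov}, Lemma \ref{lm:poincare_manifold} as applied in Section \ref{sec:proof_of_asymptotic_rigidity}, and Step 5 there. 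Two fixes are cosmetic. Smoothing $\theta(d_h(\cdot,q)/j)$ does not keep $\eta_j\equiv 1$ exactly, so compose a smoothed distance function with a fixed profile instead; you only need $|d\eta_j|_h\le C/j$. Also, the uniform convergence $f_\varepsilon\circ u\to f\circ u$ comes from $|f_\varepsilon-f|\le\varepsilon$, not from Step 1.

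What does not work is your closing plan to drop completeness by localizing on a compact exhaustion. Without completeness the statement is false, so this hypothesis is necessary, and it is missing from the statement as printed. Here is a counterexample.
\begin{itemize}
\item Let $N=\mathbb{R}^2\setminus\{(t,0):t\le 0\}$ with the Euclidean metric and $M=(-1,1)^2$.
\item Set $u(x)=(-1,x_2)$ for $x_2\neq 0$ and $u(x)=(1,0)$ for $x_2=0$.
\item Every $\phi\in C^1_c(N)$ vanishes near the slit, so $\phi\circ u=0$ for $|x_2|$ small. Hence $u$ is c.w.d., with $|du|_{g,h}=1$, and $d_h(u,(1,0))$ is bounded.
\item In polar coordinates, $f=r\vartheta$ with $\vartheta\in(-\pi,\pi)$ satisfies $|df|_h=\sqrt{1+\vartheta^2}\le\sqrt{1+\pi^2}$. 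So $f$ is Lipschitz for the intrinsic distance $d_h$.
\item Yet $f\circ u$ has one-sided limits $\pi$ and $-\pi$ at $\{x_2=0\}$, so it is not weakly differentiable.
\end{itemize}
Test functions in $C^1_c(N)$ cannot detect jumps of $u$ across the part of the metric completion missing from $N$. In this example, for any compactly supported cut-off $\eta$ equal to $1$ on $u(\{|x_2|=\delta\})$, the integral of $f(u)\,d\eta\circ du$ across the strip $\{|x_2|<\delta\}$ recovers the jump $2\pi$ up to $O(\delta)$. So no exhaustion can make that term small. Completeness is exactly what excludes this, through the cut-offs with $|d\eta_j|_h\le C/j$ that you build.
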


We are now in position to prove a generalization of the Rellich-Kondrachov theorem to homogeneous Sobolev spaces. See Proposition 3.4 in \cite{CVS} for an alternative compactness result.

\begin{theorem} \label{thm:rellich_kondrachov}
    Let $D \subset \mathbb{R}^d$ be open and bounded with Lipschitz boundary and let $N$ be complete. Let $(u_k) \subset \dot{W}^{1, p}(D; N)$ for some $p \in (1, \infty)$. Assume that there exists $q_0 \in N$ and $C > 0$ such that
    \begin{equation} \label{eq:rellich-kondrachov}
        \int_D d_h^p(u_k, q_0)\, dx \le C, \quad \int_D |du_k|_{\e_d, h}^p \, dx \le C.
    \end{equation}
    Then there exist a subsequence $(u_{k_l})$ and $u \in \dot{W}^{1, p}(D; N)$ such that 
    \[
    d_h(u_{k_l}, u) \to 0 \text{ in } L^p(D).
    \]
\end{theorem}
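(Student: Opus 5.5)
Since $N$ is complete but not compact and the maps may be discontinuous, I would avoid charts on $N$ and instead reduce to scalar Rellich--Kondrachov via distance functions, in the spirit of a Fréchet--Kolmogorov argument. The plan is to show that $(u_k)$ has a subsequence that is Cauchy in the metric $\rho(u,v) := \|d_h(u,v)\|_{L^p(D)}$, identify the limit as a measurable $N$-valued map, and then invoke Lemma \ref{lm:closure_under_pointwise_convergence} to place it in $\dot{W}^{1,p}(D;N)$.

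\emph{Step 1: tightness.} From $\int_D d_h^p(u_k,q_0)\,dx \le C$ and Chebyshev, $|\{d_h(u_k,q_0) > R\}| \le C R^{-p}$ uniformly in $k$. Since $N$ is complete, closed balls $\bar B_R(q_0)$ are compact by Hopf--Rinow. I would also obtain uniform $L^p$-smallness of the tails: the function $d_h(\cdot,q_0)$ is $1$-Lipschitz, so by Proposition \ref{pr:lipschitz_composition}, $f_k := d_h(u_k,q_0) \in W^{1,p}(D)$ with $\|f_k\|_{W^{1,p}} \le C'$. The scalar Rellich--Kondrachov theorem on the Lipschitz domain $D$ then makes $(f_k)$ precompact in $L^p$, hence $p$-equi-integrable. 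Consequently $\int_{\{f_k>R\}} f_k^p \to 0$ as $R\to\infty$ uniformly in $k$.

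\emph{Step 2: Cauchy property.} Fix $\varepsilon>0$ and choose $R$ by Step 1. Cover the compact set $\bar B_{R+1}(q_0)$ by finitely many points $q_1,\dots,q_m$ that form an $\varepsilon$-net. Each $f^{(j)}_k := \min(d_h(u_k,q_j), 2R+2)$ is a bounded Lipschitz composition, so by Proposition \ref{pr:lipschitz_composition} it is bounded in $W^{1,p}(D)$. By scalar Rellich--Kondrachov and a diagonal argument over $j$, and then over $\varepsilon = 1/n$ with countably many nets, I extract a subsequence along which every $f^{(j)}_k$ converges in $L^p$ and a.e. The key pointwise estimate is that for $x,y\in \bar B_R(q_0)$,
\[
d_h(x,y) \le 2\varepsilon + \max_j \bigl|\min(d_h(x,q_j),2R+2) - \min(d_h(y,q_j),2R+2)\bigr|.
\]
To see this, pick $q_j$ with $d_h(x,q_j)<\varepsilon$ and apply the triangle inequality. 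Splitting $D$ into the set where both $u_k,u_l$ lie in $\bar B_R(q_0)$ and its complement, I would bound $\rho(u_k,u_l)$ by $C\varepsilon$ plus the $L^p$ differences of the $f^{(j)}$ plus the tail terms $\|(f_k + f_l)\mathbf{1}_{\{f_k>R\}\cup\{f_l>R\}}\|_{L^p}$. On the complement one uses $d_h(u_k,u_l) \le f_k+f_l$. The tail terms are small by Step 1, although the cross term $f_l\mathbf{1}_{\{f_k>R\}}$ needs equi-integrability combined with the small measure of $\{f_k>R\}$. This gives $\limsup_{k,l}\rho(u_k,u_l)\lesssim\varepsilon$.

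\emph{Step 3: limit.} From the Cauchy subsequence I pass to a further subsequence with $\sum_l \rho(u_{k_l},u_{k_{l+1}})<\infty$. Then $u_{k_l}(x)$ is Cauchy in $N$ for a.e. $x$, so by completeness it converges to some $u(x)$. Fatou's lemma gives $d_h(u_{k_l},u)\to 0$ in $L^p$. Finally, Lemma \ref{lm:closure_under_pointwise_convergence} applies: it needs only the bound $\int|du_k|^p\le C$ and a.e. convergence, and yields $u\in\dot{W}^{1,p}(D;N)$.

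I expect the main obstacle to be Step 2. The difficulty is organizing the truncation and tail estimates so that the cross terms are uniformly small, and making the diagonal extraction over shrinking $\varepsilon$ rigorous. A simpler alternative would be to show directly that $(u_k)$ is totally bounded in $\rho$, using the finite-dimensional map $x\mapsto (\min(d_h(x,q_j),2R+2))_j$, which is bi-Lipschitz up to $2\varepsilon$ on $\bar B_R(q_0)$.
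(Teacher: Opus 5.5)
Your proof is correct, but it takes a genuinely different route from the paper's.

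\textbf{The paper's route.} The paper fixes one dense sequence $(q_j)$ in $N$ and makes a single diagonal extraction so that $d_h(u_{k_l},q_j)$ converges in $L^p(D)$ and a.e.\ for every $j$. It then works pointwise. At a.e.\ $x$ the points $u_{k_l}(x)$ stay in a bounded set, which is relatively compact by Hopf--Rinow. Any two limit points must coincide, because distances to a dense set separate points. So a.e.\ convergence to some $u$ comes first, with no Cauchy estimate.

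The upgrade to $L^p$ is then soft:
\begin{itemize}
\item $d_h(u_{k_l},q)$ converges in $L^p$ for every fixed $q\in N$;
\item hence $d_h(u_{k_l},v)$ converges in $L^p$ for simple $v$;
\item by approximation it converges for every $v$ with $d_h(v,q_0)\in L^p$, in particular for $v=u$ (by Fatou);
\item the $L^p$ limit must agree with the a.e.\ limit, which is $0$.
\end{itemize}
In that argument the strong $L^p$ convergence supplied by scalar Rellich--Kondrachov for each fixed $q_j$ already absorbs the tail behaviour. No truncation or explicit equi-integrability is needed.

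\textbf{Your route.} You prove a Fr\'echet--Kolmogorov-type Cauchy estimate for $\|d_h(u_k,u_l)\|_{L^p}$ directly. It uses only finite $\varepsilon$-nets of compact balls, plus an explicit tail bound from the $p$-equi-integrability of $d_h(u_k,q_0)$; that bound is correctly derived from precompactness in $L^p$, and it correctly handles the cross term. A.e.\ convergence then falls out of the completeness of $N$ via a fast Cauchy subsequence, rather than being the starting point.

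\textbf{What each buys.} Your version is more quantitative. It makes the role of each hypothesis explicit: total boundedness of balls, integrability of tails, and completeness. It also avoids the paper's approximation of $u$ by simple maps and the identification of two limits. The cost is the tail and cross-term bookkeeping you anticipated. The paper's version is shorter.

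\textbf{Two simplifications available to you.} First, the truncation at $2R+2$ is unnecessary, since $d_h(u_k,q_j)\le d_h(u_k,q_0)+d_h(q_0,q_j)$ is already bounded in $W^{1,p}(D)$. Without it, your pointwise estimate needs only one of the two points to lie in $\bar B_R(q_0)$. Second, if you take all net points from one fixed dense sequence, a single diagonal extraction, as in the paper, replaces your two-level one over $j$ and $\varepsilon=1/n$.
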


\begin{proof}
    Let $(q_j)$ be a dense sequence in $N$. We define $v_{k, j} := d_h(u_k, q_j)$. By Proposition \ref{pr:lipschitz_composition}, $v_{k, j}$ is weakly differentiable and
    \[
    |v_{k, j}| \le d_h(u_k, q_0) + d_h(q_0, q_j), \quad |Dv_{k, j}| \le |du_k|_{\e_d, h} \quad \text{a.e. in } D.
    \]
    By \eqref{eq:rellich-kondrachov}, $(v_{k, j})_k$ is bounded in $W^{1, p}(D)$ for all $j$. Using the Rellich-Kondrachov theorem and a diagonalization argument, we find functions $v_j \in W^{1, p}(D)$ and a subsequence $(k_l)$ such that
    \[
    v_{k_l, j} \to v_j \in L^p(D), \quad v_{k_l, j} \to v_j \text{ pointwise a.e. in } D
    \]
    for all $j$. Let $Z \subset D$ be a negligible set such that $v_{k_l, j}(x)$ converges for all $x \in D \setminus Z$ and all $j$. We claim $u_{k_l}(x)$ converges in $N$. Let $q$ and $q'$ be limit points of $(u_{k_l}(x))$. If $q \neq q'$, then we can find $j$ such that $d_h(q, q_j) < 2d_h(q', q_j)$. In this case, $(v_{k_l, j}(x))$ clearly cannot converge. Hence, $q = q'$, and $(u_{k_l}(x))$ is convergent.
    
    We set $u(x) := \lim_{l \to \infty} u_{k_l}(x)$ for $x \in D \setminus Z$ and $u(x) = q_0$ for $x \in Z$. By Lemma \ref{lm:closure_under_pointwise_convergence}, we conclude that $u \in \dot{W}^{1, p}(D; N)$. Furthermore, Fatou's lemma gives
    \begin{equation} \label{eq:application_fatou's_lemma}
        \int_D d_h^p(u, q_0) \, dx \le \liminf_{l \to \infty} \int_D d_h^p(u_{k_l}, q_0) \, dx.    
    \end{equation}
    To finish the proof, we need to show that $d_h(u_{k_l}, u) \to 0$ in $L^p(D)$. We start by noting that $d_h(u_{k_l}, q)$ converges in $L^p(D)$ for all $q \in N$, since $(v_{k_l, j})$ is convergent in $L^p(D)$ and $(q_j)$ is dense in $N$. As a consequence, $d_h(u_{k_l}, v)$ converges for all simple functions $v : D \rightarrow N$ and, by approximation, for all $v$ such that $d_h(v, q_0) \in L^p(D)$. Hence, $d_h(u_{k_l}, u)$ converges in $L^p(D)$ by \eqref{eq:application_fatou's_lemma}. However, since the pointwise limit is $0$ a.e. in $D$, the $L^p$ limit has to be $0$ as well.
\end{proof}

We end this section with a variation of Poincaré's inequality on cubes for homogeneous Sobolev maps.

\begin{lemma} \label{lm:poincare_manifold}
    Let $1 < p < \infty$. Let $Q \subset \mathbb{R}^d$ be an open and bounded cube and $u \in \dot{W}^{1, p}(Q; N)$. Assume that
    \begin{equation} \label{eq:lp_integrable}
        \int_Q d_h^p(u, q_0) \, dx < \infty
    \end{equation}
    for some $q_0 \in N$. Then
    \begin{equation} \label{eq:distance_poincaré}
        \int_Q \int_Q d_h^p(u(x), u(z)) \, dx \, dz \le C\diam^p(Q) |Q| \int_Q |du|_{\e_d, h}^p \, dx,
    \end{equation}
    where $C$ depends only on $d$ and $p$.
\end{lemma}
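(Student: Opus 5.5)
The plan is to reduce the estimate to the scalar Poincaré inequality on cubes, using the Lipschitz functions $f_z := d_h(\cdot, u(z))$.

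\textbf{Step 1 (scalar functions).} Fix $z \in Q$ and set $w_z(x) := d_h(u(x), u(z))$. The function $d_h(\cdot, q)$ is $1$-Lipschitz on $N$ for every $q$. Assumption \eqref{eq:lp_integrable} makes $d_h(u, q_0)$ locally integrable, and $|du|_{\e_d,h} \in L^p(Q)$. So Proposition \ref{pr:lipschitz_composition} applies: $w_z$ is weakly differentiable and
\[
|Dw_z| \le |du|_{\e_d,h} \quad \text{a.e. in } Q .
\]
Moreover $w_z \le d_h(u, q_0) + d_h(q_0, u(z))$, so $w_z \in W^{1,p}(Q)$ for a.e. $z$, namely those $z$ with $d_h(u(z), q_0) < \infty$, which is all $z$.

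\textbf{Step 2 (decouple the variables).} The estimate $|Dw_z| \le |du|$ cannot be used directly: $w_z(z) = 0$, but evaluating a Sobolev function at a point is meaningless. Instead I would pass through the mean. Rather than freezing $z$, I apply the scalar Poincaré inequality on the cube $Q$ to $w_z$,
\[
\int_Q |w_z - \bar w_z|^p \, dx \le C \diam^p(Q) \int_Q |du|^p \, dx,
\]
where $\bar w_z$ is the mean of $w_z$. The constant scales correctly with the cube, because cubes are similar; this is where $C = C(d,p)$ comes from. It then remains to control $\bar w_z$ on average in $z$.

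\textbf{Step 3 (cleaner route via the squared variable).} Controlling $\bar w_z$ requires information at the diagonal, so instead I would use the map
\[
F(x,z) := d_h(u(x), u(z)) \quad \text{on } Q \times Q .
\]
By Proposition \ref{pr:product_map_derivative}, the map $(x,z) \mapsto (u(x), u(z))$ is c.w.d. into $N \times N$. The function $d_h$ is $\sqrt2$-Lipschitz on $N \times N$ with the product metric, so Proposition \ref{pr:lipschitz_composition} gives
\[
|DF(x,z)| \le |du_x| + |du_z| .
\]
The function $F$ vanishes on the diagonal. Still, I would avoid traces entirely by using the standard segment argument in the scalar setting, which proves $\int\!\!\int |v(x)-v(z)|^p \le C \diam^p |Q| \int |Dv|^p$ for smooth $v$. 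Direct use is blocked because $u$ is not scalar.

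\textbf{Step 4 (final reduction, the one I would actually use).} Take a dense sequence $(q_j)$ as in Theorem \ref{thm:rellich_kondrachov}. For each $j$ set $v_j := d_h(u, q_j) \in W^{1,p}(Q)$, which satisfies $|Dv_j| \le |du|$. The triangle inequality gives $|v_j(x) - v_j(z)| \le d_h(u(x), u(z))$, which is the wrong direction, but the supremum over $j$ recovers the distance:
\[
d_h(u(x), u(z)) = \sup_j |v_j(x) - v_j(z)| .
\]
The main obstacle is interchanging this supremum with the integral. I would handle it by applying the classical inequality $\int\!\!\int |v(x)-v(z)|^p \le C\diam^p|Q|\int \sup_j$-type bounds via the segment formula. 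Concretely, for smooth approximations one has
\[
|v_j(x)-v_j(z)| \le \int_0^1 |Dv_j(z+t(x-z))|\,|x-z| \, dt \le |x-z| \int_0^1 g(z+t(x-z)) \, dt ,
\]
with $g := |du|_{\e_d,h}$ independent of $j$. This bound holds a.e. for Sobolev functions, by approximation on a.e. line via Fubini. Taking the supremum over $j$ gives
\[
d_h(u(x),u(z)) \le |x-z| \int_0^1 g(z+t(x-z)) \, dt \quad \text{for a.e. } (x,z).
\]
Then Jensen's inequality and the usual change of variables $y = z + t(x-z)$, split at $t = 1/2$, yield \eqref{eq:distance_poincaré}.
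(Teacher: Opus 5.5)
Your final route (Step 4) is correct, and it rests on the same reduction as the paper. Both use a dense sequence $(q_j)$ and the scalar functions $v_j=d_h(u,q_j)$ with $|Dv_j|\le|du|_{\e_d,h}$. Both need a scalar estimate whose right-hand side is independent of $j$ and holds off a common null set, and both recover $d_h(u(x),u(z))$ by density.

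The difference is the scalar tool and where the limit in $j$ is taken. The paper proceeds as follows:
\begin{enumerate}
\item It uses the Poincar\'e inequality on the unit ball with weight $|x-z|^{1-d}$, valid for a.e.\ $z$ (cited from Evans--Gariepy).
\item It fixes $z$ outside a common null set, lets $q_{j_i}\to u(z)$, and passes to the limit in the $x$-integral by dominated convergence. This is where \eqref{eq:lp_integrable} is genuinely used.
\item It integrates in $z$, using local integrability of $|x-z|^{1-d}$.
\item It transfers the estimate to cubes by a bilipschitz map and scaling.
\end{enumerate}
You instead take the supremum over the countably many $j$ pointwise for a.e.\ $(x,z)$ in the segment inequality. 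You then apply Jensen and the change of variables split at $t=1/2$, directly on the convex cube.

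Your version is self-contained and needs neither dominated convergence nor the ball-to-cube transfer. It works on any convex domain and gives the explicit constant $C=2^d$, independent of $p$. The hypothesis \eqref{eq:lp_integrable} is then only needed to make $v_j\in W^{1,p}(Q)$. The price is that you must prove the a.e.\ segment inequality for Sobolev functions yourself instead of citing it.

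On that point, reorder your justification, because as written it is wrong. For the smooth approximants $v_j^\epsilon$, the bound $|Dv_j^\epsilon|\le g$ does not hold. Proceed in two steps:
\begin{enumerate}
\item First prove $|v_j(x)-v_j(z)|\le|x-z|\int_0^1|Dv_j|(z+t(x-z))\,dt$ for a.e.\ $(x,z)$ by approximation. The tool is the operator $Th(x,z):=|x-z|\int_0^1 h(z+t(x-z))\,dt$, which satisfies $\|Th\|_{L^1(Q\times Q)}\le 2^d\diam(Q)\,|Q|\,\|h\|_{L^1(Q)}$ by the same change of variables as in your last step. So convergence of $Dv_j^\epsilon$ in $L^1$ passes to the right-hand side along a subsequence.
\item Only then replace $|Dv_j|$ by $g$. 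This is legitimate because the same bound gives $T\mathbf{1}_A=0$ a.e.\ whenever $|A|=0$.
\end{enumerate}
Steps 1--3 of your proposal are abandoned attempts, and the garbled sentence about ``$\sup_j$-type bounds'' should be removed.
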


\begin{proof}
    We start by proving the inequality in $B(0, 1)$. Let $v \in W^{1, p}(B(0, 1))$. By Poincaré's inequality on balls (see \cite[Lemma 4.1]{EG}), we have
    \begin{equation} \label{eq:poincaré_on_balls}
        \int_{B(0, 1)} |v(x) - v(z)|^p \, dx \le C \int_{B(0, 1)} |Dv(x)|^p |x - z|^{1 - n} \, dx \quad \text{for a.e. } z \in B(0, 1).
    \end{equation}
    Let $(q_j)$ be a dense sequence in $N$ and set $v_j := d_h(u, q_j)$. By Proposition \ref{pr:lipschitz_composition}, $v_j$ is weakly differentiable and 
    \[
    |v_j| \le d_h(u, q_0) + d_h(q_0, q_j), \quad |Dv_j| \le |du|_{\e_d, h} \quad \text{a.e. in } B(0, 1),
    \]
    so that $v_j \in W^{1, p}(B(0, 1))$. Therefore, \eqref{eq:poincaré_on_balls} gives a negligible set $Z$ such that
    \[
    \int_{B(0, 1)} |v_j(x) - v_j(z)|^p \, dx \le C \int_{B(0, 1)} |du_x|_{\e_d, h}^p |x - z|^{1 - n} \, dx
    \]
    for all $z \in B(0, 1) \setminus Z$ and $j \in \mathbb{N}$. Fix $z \in B(0, 1) \setminus Z$ and choose a subsequence $(q_{j_i})$ converging to $u(z)$. Since $|v_j(x) - v_j(z)| \le d_h(u(x), u(z))$, the bound \eqref{eq:lp_integrable} and the dominated convergence theorem imply
    \[
    \int_{B(0, 1)} d_h(u(x), u(z))^p \, dx = \lim_{i \to \infty} \int_{B(0, 1)} |v_{j_i}(x) - v_{j_i}(z)|^p \, dx \le C \int_{B(0, 1)} |du_x|_{\e_d, h}^p |x - z|^{1 - n} \, dx. 
    \]
    Finally, integrating both sides in $z$, we obtain
    \[
    \int_{B(0, 1)} \int_{B(0, 1)} d_h^p(u(x), u(z)) \, dx \, dz \le C \int_{B(0, 1)} |du_x|_{\e_d, h}^p \, dx,
    \]
    Since the unit cube and the unit ball can be mapped to each other by a bilipschitz map, the inequality on the unit cube follows by a change of variables. We conclude the proof by noting that \eqref{eq:distance_poincaré} is invariant under scaling.
\end{proof}

\subsubsection{Sobolev immersions} \label{sec:sobolev_immersions}

In this section we introduce the main function space we will work with. The following definitions are based on \cite[Sections 2.4, 2.5]{AKM}. Let $(M, g)$ and $(N, h)$ be oriented Riemannian manifolds of $d$ and $d + 1$ dimensions, respectively. Let $1 < p < \infty$. If $u \in \dot{W}^{1, p}(M; N)$ and $\rank du_x = d$, then there exists a unique vector $\nu_u(x) \in T_{u(x)}N$ satisfying the following conditions:
\begin{enumerate}
    \item $|\nu_u(x)|_h = 1$,
    \item $(\nu_u(x), du_x(v))_h = 0$ for all $v \in T_xM$,
    \item if $(v_1, \dots, v_d)$ is a positively oriented basis of $T_xM$, then $(du_x(v_1), \dots, du_x(v_d), \nu_u(x))$ is a positively oriented basis of $T_{u(x)}N$.
\end{enumerate}
We call $\nu_u(x)$ the oriented unit normal at $x$. If $\rank du_x < d$, we set $\nu_u(x) := 0 \in T_{u(x)}N$. We define the space of Sobolev immersions by
\[
\Imm_p(M; N) := \{u \in \dot{W}^{1, p}(M; N) : \rank du_x = d \text{ for a.e. } x \in M, \, \nu_u \in \dot{W}^{1, p}(M; TN)\}.
\]
Given $u \in \Imm_p(M; N)$, we can consider it as a deformation of $M$. We introduce the following functionals that quantify its stretching and bending energies:
\begin{align}
    E_s(u) &:= \int_M \dist_{g, h}^p(du_x, \Ort((T_xM, g_x), (T_{u(x)}N, h_{u(x)}))) \, d\vol_g(x), \\
    E_b(u) &:= \int_M |K_{TN} \circ d(\nu_u)_x|_{g, h}^p \, d\vol_g(x).    
\end{align}
We recall that if $u$ is smooth, $K_{TN} \circ d(\nu_u)$ gives the covariant derivative of $\nu_u$. Since $du_x$ is injective for a.e. $x \in M$, there exists a unique linear map $S_u(x) : T_xM \rightarrow T_xM$ satisfying
\[
du_x \circ S_u(x) = K_{TN} \circ d(\nu_u)_x
\]
for a.e. $x \in M$. We call $S_u : TM \rightarrow TM$ the induced shape operator. If $M$ has a pre-assigned ``shape", then we can also measure the deviation from the original shape after the deformation. Assume that $M$ is equipped with a smooth symmetric 2-tensor field $b$. For each $x \in M$ we let $S(x) : T_xM \rightarrow T_xM$ be the unique linear map defined by
\begin{equation} \label{eq:reference_shape_operator}
    (S(x)v, w)_{g_x} = b(v, w) \quad \text{for all } v, w \in T_xM.
\end{equation}
We call the resulting map $S : TM \rightarrow TM$ the reference shape operator. The modified bending energy is given by
\begin{equation}
    E_b^S(u) := \int_M |du_x \circ (S_u(x) - S(x))|_{g_x, h_{u(x)}}^p \, d\vol_g(x).
\end{equation}
Clearly, $E_b(u) = E_b^S(u)$ if $S \equiv 0$.

\section{The quantitative rigidity estimate on \texorpdfstring{$\varepsilon$}{epsilon}-isometric charts} \label{sec:quantitative}

Let $d \in \mathbb{N}$ and let $(N, h)$ be an oriented $(d + 1)$-dimensional Riemannian manifold. In this section, we prove a local quantitative rigidity result for Sobolev immersions defined on a subset of $\mathbb{R}^d$ and taking values in $N$. This is achieved in Theorem \ref{thm:local_quantitative_rigidity}. Our strategy will be to put coordinates on $N$ to work with vector valued functions to which we can apply rigidity results in Euclidean spaces. In order to preserve as much metric information as possible in the transition to coordinates, we need to work with close-to-isometric charts, which we introduce in the next definition.

\begin{definition} \label{def:almost_isometric_chart}
    Let $(U, \varphi)$ be chart on $N$ and $\varepsilon \in (0, 1)$. Denote the pushforward of the metric $h$ by $\varphi$ by the same letter and let $\Gamma_{i, j}^k : U \rightarrow \mathbb{R}$ for $i, j , k = 1, \dots, d + 1$ be the Christoffel symbols for $h$. The chart $(U, \varphi)$ is called $\varepsilon$-isometric if
    \begin{equation} \label{eq:epsilon_isometric_chart}
        \frac{1}{1 + \varepsilon} \e_{d + 1} < h < (1 + \varepsilon) \e_{d + 1} \text{ in } \varphi(U), \quad \|\Gamma_{i, j}^k\|_{L^\infty(U)} \le \varepsilon \text{ for all } i, j, k \in \{1, \dots, d + 1\}.
    \end{equation}
\end{definition}

For all $q \in N$ and $\varepsilon > 0$, the restriction of the exponential map on $T_qN$ to a sufficiently small ball centered at $0$ gives an $\varepsilon$-isometric chart. In the next lemma, we bound quantities that appear in Euclidean rigidity estimates in terms of the corresponding quantities on the manifold.

\begin{lemma} \label{lm:non_euclidean_to_euclidean}
    Let $(U, \varphi)$ be an $\varepsilon$-isometric chart on $N$. Let $q \in U$ and set $y := \varphi(q)$.
    \begin{enumerate}
        \item Let $g_0$ be a constant metric on $\mathbb{R}^d$. If $L \in \mathcal{L}(\mathbb{R}^d, T_qN)$, then
        \begin{equation} \label{eq:distance_from_o_isometric_chart}
            \dist_{g_0, \e_{d + 1}}(D\varphi(q) \circ L, \Ort(g_0, \e_{d + 1}) \le \sqrt{1 + \varepsilon} \dist_{g_0, h_q}(L, \Ort((\mathbb{R}^d, g_0), (T_qN, h_q)) + C\varepsilon.
        \end{equation}
        \item Let $\Pi \subset \mathbb{R}^{d + 1}$ be a $d$-dimensional subspace. Let $n \in \mathbb{R}^{d + 1}$ be normal to $\Pi$ and let $\nu \in T_qN$ be normal to $d(\varphi^{-1})_y(\Pi)$. Assume that $|n| = 1$, $|\nu|_h = 1$ and assume that $n$ and $\tilde{n} := D\varphi(q)(\nu)$ induce the same orientation on $\Pi$. Then $|n - \tilde{n}| \le C\varepsilon$.
    \end{enumerate}
    Both constants $C$ depend only on $d$.
\end{lemma}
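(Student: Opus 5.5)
We work entirely in the coordinates given by $\varphi$. Identify $T_qN$ with $\mathbb{R}^{d+1}$ via $D\varphi(q)$, so that $h_q$ becomes a constant metric $h$ on $\mathbb{R}^{d+1}$ with $\frac{1}{1+\varepsilon}\e_{d+1} < h < (1+\varepsilon)\e_{d+1}$. Both parts then reduce to linear algebra comparing $h$ with $\e_{d+1}$. Only the metric bound in \eqref{eq:epsilon_isometric_chart} is needed; the Christoffel symbols play no role here. The key auxiliary object is $A := h^{1/2}$, the positive square root of the Gram matrix of $h$. It satisfies $|Aw| = |w|_h$, and its eigenvalues lie in $[(1+\varepsilon)^{-1/2}, (1+\varepsilon)^{1/2}]$. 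Hence $|A - \Id| \le C\varepsilon$, $|A^{-1}-\Id|\le C\varepsilon$, and the operator norms of $A^{\pm1}$ are at most $\sqrt{1+\varepsilon}$, with $C$ depending only on $d$.

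For part 1, write $\tilde L := D\varphi(q)\circ L$ and pick $R \in \Ort((\mathbb{R}^d,g_0),(T_qN,h_q))$ attaining the distance; it exists by compactness. In coordinates, $R$ is $h$-isometric, so $AR \in \Ort(g_0,\e_{d+1})$. Then
\[
\dist_{g_0,\e_{d+1}}(\tilde L, \Ort(g_0,\e_{d+1})) \le |\tilde L - AR|_{g_0,\e} \le |\tilde L - R|_{g_0,\e} + |(\Id - A)R|_{g_0,\e}.
\]
For the first term, $|\tilde L - R|_{g_0,\e} \le \|A^{-1}\|\,|A(\tilde L - R)|_{g_0,\e} = \|A^{-1}\|\,|\tilde L - R|_{g_0,h} \le \sqrt{1+\varepsilon}\,\dist_{g_0,h_q}(L,\Ort)$. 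For the second term, $|(\Id-A)R|_{g_0,\e} \le |\Id - A|\,\|R\|$, where $\|R\|$ is the $g_0\to\e$ operator norm. Since $|Rv|_{\e} \le \|A^{-1}\|\,|Rv|_h = \|A^{-1}\|\,|v|_{g_0}$, we get $\|R\|\le\sqrt2$, and the second term is at most $C\varepsilon$. Adding the two bounds gives \eqref{eq:distance_from_o_isometric_chart}.

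For part 2, the condition that $\nu$ is $h$-normal to $\Pi$ means that $h\tilde n$ is Euclidean-normal to $\Pi$, so $h\tilde n = \lambda n$ for some $\lambda\in\mathbb{R}$. Thus $\tilde n = \lambda h^{-1} n$. The orientation hypothesis, that $n$ and $\tilde n$ lie on the same side of $\Pi$, means $(n,\tilde n)>0$, hence $\lambda>0$. The normalization $|\tilde n|_h=1$ then gives $\lambda = (n, h^{-1}n)^{-1/2}$.

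Now $|h^{-1}-\Id|\le C\varepsilon$, so $(n,h^{-1}n) = 1+O(\varepsilon)$ and therefore $\lambda = 1+O(\varepsilon)$. Consequently
\[
|\tilde n - n| \le |\lambda-1|\,|h^{-1}n| + |h^{-1}n - n| \le C\varepsilon,
\]
using $\varepsilon<1$ to keep all constants uniform.

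The main care point is the sign and orientation step in part 2, that is, justifying that ``same orientation on $\Pi$'' forces $\lambda>0$. This follows because both $n$ and $\tilde n$ are transversal to $\Pi$, and the orientation they induce on $\Pi$ is determined by the sign of their component along $n$. The other point to watch is bookkeeping of constants so that they depend only on $d$, which holds because all estimates come from eigenvalue bounds in dimension $d+1$.
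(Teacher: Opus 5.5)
Your proof is correct. Part 1 follows the paper's argument. The paper's map $S$, defined by $S\tilde v_i = v_i$ on a rescaled eigenbasis of the Gram matrix $H_y$, is exactly your $A = h^{1/2}$. The paper likewise bounds $|D\varphi(q)\circ(L-R)|_{g_0,\e_{d+1}}$ by $\sqrt{1+\varepsilon}$ times the $h$-norm, and bounds the error term by $|(\Id - S)\circ D\varphi(q)\circ R|_{g_0,\e_{d+1}} \le |\Id - S|_{h_y,\e_{d+1}} \le C\varepsilon$. It uses that $R$ is $h$-isometric, where you use an operator-norm bound $\|R\|\le\sqrt2$; either works.

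Part 2 takes a different route. The paper decomposes $\tilde n = (\tilde n,n)n + (\tilde n,w)w$ with a unit vector $w\in\Pi$. It then uses the comparison $|(v,\tilde v) - (v,\tilde v)_{h_y}| \le \varepsilon|v||\tilde v|$ in two ways:
\begin{itemize}
\item the tangential component $(\tilde n,w) = (\tilde n,w) - (\tilde n,w)_{h_y}$ is $O(\varepsilon)$;
\item $|\tilde n|^2 = 1 + O(\varepsilon)$.
\end{itemize}
Together these give $(\tilde n,n) \ge 1 - 6\varepsilon$, with the sign fixed by the orientation hypothesis.

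You instead solve the normality condition exactly: $H_y\tilde n \perp \Pi$ gives $\tilde n = \lambda H_y^{-1}n$ with $\lambda = (n,H_y^{-1}n)^{-1/2} > 0$. Everything then reduces to $|H_y^{-1} - \Id| \le C\varepsilon$. Your version yields an explicit formula for $\tilde n$ and slightly cleaner bookkeeping of constants. The paper's version avoids inverting the Gram matrix and uses only the two-sided quadratic-form bound on $h$ from the definition of an $\varepsilon$-isometric chart.

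Both arguments use the same orientation step: only the sign of the $n$-component matters. As you note, both also ignore the Christoffel-symbol condition.
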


\begin{proof}
    We denote by $h_y$ the pushforward of the metric $h_q$ by $\varphi$. Define the matrix $H_y$ by 
    \[
    (H_y)_{ij} := \left(\frac{\partial}{\partial y_i}\bigg|_y, \frac{\partial}{\partial y_j}\bigg|_y\right)_{h_y}, \quad i, j = 1, \dots, d + 1.
    \]
    Let $(v_1, \dots, v_{d + 1})$ be an orthonormal basis of $\mathbb{R}^{d + 1}$ consisting of eigenvectors of $H_y$. Set $\mu_i := (v_i, v_i)_{h_y}$ and $\tilde{v}_i := v_i/\sqrt{\mu_i}$. By \eqref{eq:epsilon_isometric_chart}, we have $(1 + \varepsilon)^{-1} < \mu_i < 1 + \varepsilon$. Define $S \in \mathcal{L}(\mathbb{R}^{d + 1})$ by $S\tilde{v}_i := v_i$, so that $S \in \Ort(h_y, e_{d + 1})$. Then
    \begin{equation} \label{eq:comparison_with_identity}
        |\Id_{\mathbb{R}^{d + 1}} - S|_{h_y, \e_{d + 1}}^2 = \sum_{i = 1}^{d + 1} |\tilde{v}_i - S\tilde{v_i}|^2 = \sum_{i = 1}^{d + 1} \left|\frac{1}{\sqrt{\mu_i}} - 1\right|^2 \le 4 \sum_{i = 1}^{d + 1} |1 - \mu_i|^2 = 4(d + 1)\varepsilon^2.
    \end{equation}
    Now, we choose $R \in \Ort((\mathbb{R}^d, g_0), (T_qN, h_q))$ such that
    \[
    |L - R|_{g_0, h_q} = \dist_{g_0, h_q}(L, \Ort((\mathbb{R}^d, g_0), (T_qN, h_q)).
    \]
    Then $S \circ D\varphi(q) \circ R \in \Ort(g_0, \e_{d + 1})$ and
    \begin{align*}
        \dist&_{g_0, \e_{d + 1}}(D\varphi(q) \circ L, \Ort(g_0, \e_{d + 1})) \\
        &\le |D\varphi(q) \circ L - S \circ D\varphi(q) \circ R|_{g_0, \e_{d + 1}} \le |D\varphi(q) \circ (L - R)|_{g_0, \e_{d + 1}} + |(\Id_{\mathbb{R}^{d + 1}} - S) \circ D\varphi(q) \circ R|_{g_0, \e_{d + 1}} \\
        &\le \sqrt{1 + \varepsilon} |D\varphi(q) \circ (L - R)|_{g_0, h_y} + |\Id_{\mathbb{R}^{d + 1}} - S|_{h_y, \e_{d + 1}} \le \sqrt{1 + \varepsilon} |L - R|_{g_0, h_q} + C\varepsilon.
    \end{align*}
    Hence,
    \[
    \dist_{g_0, \e_{d + 1}}(D\varphi(q) \circ L, \Ort(g_0, \e_{d + 1})) \le \sqrt{1 + \varepsilon} \dist_{g_0, h_q}(L, \Ort((\mathbb{R}^d, g_0), (T_qN, h_q)) + C\varepsilon.
    \]
    We move on to the second claim. If $v, \tilde{v} \in \mathbb{R}^{d + 1}$ and
    \[
    v = \sum_{i = 1}^{d + 1} a_i v_i, \quad \tilde{v} = \sum_{i = 1}^{d + 1} b_i v_i,
    \]
    then
    \begin{equation} \label{eq:inner_products_difference}
        |(v, \tilde{v}) - (v, \tilde{v})_{h_y}| = \left|\sum_{i = 1}^{d + 1} a_i b_i (1 - \mu_i)\right| \le \varepsilon |v| |\tilde{v}|.
    \end{equation}
    An orthogonal decomposition of $\tilde{n}$ yields a unit vector $w \in \Pi$ such that $\tilde{n} = (\tilde{n}, n)n + (\tilde{n}, w)w$. By \eqref{eq:inner_products_difference}, we get
    \[
    |(\tilde{n}, \tilde{n}) - 1| = |(\tilde{n}, \tilde{n}) - (\tilde{n}, \tilde{n})_{h_y}| \le \varepsilon |\tilde{n}|^2 \le 4\varepsilon, \quad |(\tilde{n}, w)| = |(\tilde{n}, w) - (\tilde{n}, w)_{h_y}| \le \varepsilon |\tilde{n}| |w| \le 2\varepsilon.
    \]
    Therefore, $|(\tilde{n}, n)| > |\tilde{n}| - |(\tilde{n}, w)| \ge 1 - 6\varepsilon$. However, $(\tilde{n}, n) > 0$, since $\tilde{n}$ and $n$ induce the same orientation on $\Pi$. Thus, we conclude that $|\tilde{n} - n| \le C\varepsilon$.
\end{proof}

The composition of a Sobolev immersion and a coordinate chart is not weakly differentiable in general, since its domain is not necessarily open. In the next definition, we extend the domain of a chart to all of $N$ by a cut-off procedure. This allows us to obtain compactly supported coordinate maps that are diffeomorphic on an open subset of $N$. 

\begin{definition} \label{def:extension}
    Let $(U, \varphi)$ be a chart on $N$ centered at $q$, that is, $\varphi(q) = 0$. Let $\theta \in C_c^\infty(\mathbb{R}^{d + 1}; \mathbb{R}^{d + 1})$ and assume that $\theta = \Id_{\mathbb{R}^{d + 1}}$ in $B(0, 1)$ and $\theta \equiv 0$ in $\mathbb{R}^{d + 1} \setminus B(0, 2)$. If $B(0, 2r) \subset \varphi(U)$, then we set $\varphi^{(r)}(x) := r\theta(\varphi(x)/r)$ for $x \in U$ and $\varphi^{(r)}(x) := 0$ for $x \in N \setminus U$. We say that $\varphi^{(r)}$ extends $(U, \varphi)$ by $\theta$.
\end{definition}

If $\varphi^{(r)}$ extends $(U, \varphi)$ by $\theta$, then we note that $\supp(\varphi^{(r)}) \subset U$ and $\varphi^{(r)} = \varphi$ in $\varphi^{-1}(B(0, r))$. If $u$ is a Sobolev immersion taking values in $N$, then $\varphi^{(r)} \circ u$ and $D\varphi^{(r)} \circ \nu_u$ are almost isometric coordinate representations of $u$ and the unit normal vector field $\nu_u$, respectively, for $x \in u^{-1}(\varphi^{-1}(B(0, r)))$. Our next goal is to bound the derivative of $D\varphi^{(r)} \circ \nu_u$ in terms of the covariant derivative $K_{TN} \circ d\nu_u$. We begin with a lemma.

\begin{lemma} \label{lm:game_of_connectors}
    Let $(U, \varphi)$ be an $\varepsilon$-isometric chart on $N$ and let $\xi \in T_{(q, v)}TN$ with $q \in U$. Then
    \[
    |K_{T\mathbb{R}^{d + 1}}(d^2\varphi_{(q, v)}(\xi))| \le C(|K_{TN}(\xi)|_h + |v|_h |d(\pi_N)_{(q, v)}(\xi)|_h),
    \]
    where the constant $C$ depends only on $d$.
\end{lemma}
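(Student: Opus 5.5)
We compute both sides in the coordinates induced by the chart, using the formulas for the connector operator derived in the previous subsection. Write $y := \varphi(q)$ and let $w \in \mathbb{R}^{d+1}$ be the coordinate vector of $v$, so that $v = \sum_i w_i X_i(q)$. Represent $\xi$ as in \eqref{eq:coordinates_double_tangent_bundle} with coefficient vectors $Y, W \in \mathbb{R}^{d+1}$.

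The first step is to identify $d^2\varphi_{(q,v)}(\xi)$ under the identification \eqref{eq:double_tangent_bundle} of $TT\mathbb{R}^{d+1}$ with $\mathbb{R}^{4(d+1)}$. The coordinates $(y,w)$ on $TU$ are, by construction, exactly the pullback of the standard coordinates on $T\mathbb{R}^{d+1}$ via $d\varphi$. Hence $d\varphi$ is the identity in these coordinates, and $d^2\varphi_{(q,v)}(\xi)$ corresponds to $(y, w, Y, W)$. By \eqref{eq:connector_operator_euclidean}, this gives
\[
K_{T\mathbb{R}^{d+1}}(d^2\varphi_{(q,v)}(\xi)) = W,
\]
viewed as a Euclidean vector at $y$.

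Next, \eqref{eq:connector_operator_coordinates} expresses $W_k$ as the $k$-th coordinate of $K_{TN}(\xi)$ minus the Christoffel term $\sum_{i,j} w_j Y_i \Gamma_{i,j}^k(q)$. Taking Euclidean norms,
\[
|W| \le |\kappa| + \Big(\sum_k \Big|\sum_{i,j} w_j Y_i \Gamma^k_{i,j}(q)\Big|^2\Big)^{1/2} \le |\kappa| + C\varepsilon |w|\,|Y|,
\]
where $\kappa$ denotes the coordinate vector of $K_{TN}(\xi)$ and we used $|\Gamma^k_{i,j}| \le \varepsilon$ together with Cauchy--Schwarz.

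Finally, we convert the Euclidean norms of coordinate vectors into $h$-norms. By \eqref{eq:epsilon_isometric_chart}, the Euclidean norm of a coordinate vector is at most $\sqrt{1+\varepsilon}\le\sqrt2$ times its $h$-norm. This gives $|\kappa| \le \sqrt 2\,|K_{TN}(\xi)|_h$ and $|w| \le \sqrt2 |v|_h$. By \eqref{eq:bundle_projection_coordinates}, $Y$ is the coordinate vector of $d(\pi_N)_{(q,v)}(\xi)$, so $|Y| \le \sqrt2 |d(\pi_N)_{(q,v)}(\xi)|_h$. Combining these with $\varepsilon<1$ yields the claim with $C$ depending only on $d$.

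The only delicate point is the first step: checking carefully that the second differential of $\varphi$ acts as the identity in the chosen coordinates on $TTN$ and $TT\mathbb{R}^{d+1}$. This amounts to unwinding the definitions. Notation such as $d^2\varphi$ meaning $d(d\varphi)$ needs to be made explicit, but no genuine obstacle arises.
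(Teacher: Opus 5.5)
Your proposal is correct and follows essentially the same route as the paper. Both arguments work in the induced coordinates $(y,w,Y,W)$ and observe that $K_{T\mathbb{R}^{d+1}}(d^2\varphi_{(q,v)}(\xi))$ has coordinate vector $W$, while $d\varphi_q(K_{TN}(\xi))$ has components $W_k+\sum_{i,j}w_jY_i\Gamma^k_{i,j}(q)$; the Christoffel term is then bounded by a multiple of $|w|\,|Y|$, and coordinate norms are converted into $h$-norms using the $\varepsilon$-isometry bounds on $h$. Your version is slightly more careful than the paper's one-line estimate, because you keep the factor $\varepsilon$ and a $d$-dependent constant in the Christoffel bound, which the paper suppresses.
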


\begin{proof}
    Denote the local coordinates on $U$ by $(y_1, \dots, y_{d + 1})$. We define coordinates on $TU$ by
    \[
    (y, w) \in U \times \mathbb{R}^{d + 1}\mapsto \sum_{i = 1}^{d + 1} w_i \frac{\partial}{\partial y_i}\bigg|_{\varphi^{-1}(y)} \in TU.
    \]
    Then
    \[
    \xi = \sum_{i = 1}^{d + 1} Y_i \frac{\partial}{\partial y_i}\bigg|_{(q, v)} + \sum_{i = 1}^{d + 1} W_i \frac{\partial}{\partial w_i}\bigg|_{(q, v)}
    \]
    for some $Y, W \in \mathbb{R}^{d + 1}$. By \eqref{eq:connector_operator_coordinates} and \eqref{eq:connector_operator_euclidean}, we have
    \begin{gather*}
        d\varphi_q(K_{TN}(\xi)) = \sum_{k = 1}^{d + 1} \left(W_k + \sum_{i, j = 1}^{d + 1} w_j Y_i \Gamma_{i, j}^k(q)\right) \frac{\partial}{\partial y_k}\bigg|_y, \\
        K_{T\mathbb{R}^{d + 1}}(d^2\varphi_{(q, v)}(\xi)) = \sum_{k = 1}^{d + 1} W_k \frac{\partial}{\partial y_k}\bigg|_y,
    \end{gather*}
    where $(y, w)$ are the coordinates of $(q, v)$. We know that
    \[
    d\varphi_q(v) = \sum_{i = 1}^{d + 1} w_i \frac{\partial}{\partial y_i}\bigg|_y, \quad d(\varphi \circ \pi_N)_{(q, v)}(\xi) = \sum_{i = 1}^{d + 1} Y_i \frac{\partial}{\partial y_i}\bigg|_y.
    \]
    Consequently,
    \[
    |d\varphi_q(K_{TN}(\xi)) - K_{T\mathbb{R}^{d + 1}}(d^2\varphi_{(q, v)}(\xi))| \le |d\varphi_q(v)| |d(\varphi \circ \pi_N)_{(q, v)}(\xi)| \le 4 |v|_h |d(\pi_N)_{(q, v)}(\xi)|_h.
    \]
    Since $|d\varphi_q(K_{TN}(\xi))| \le 2|K_{TN}(\xi)|_h$, we are done.
    
\end{proof}

\begin{lemma} \label{lm:euclidean_normal_bound}
    Let $(U, \varphi)$ be an $\varepsilon$-isometric chart on $N$ and let $\varphi^{(r)}$ be its extension by $\theta$. Let $Q \subset \mathbb{R}^d$ be an open and bounded cube and let $u \in \Imm_p(Q; N)$. Define $\tilde{n}(x)$ by
    \begin{equation} \label{eq:euclidean_normal_definition}
        d\varphi^{(r)}_{u(x)}(\nu_u(x)) = \sum_{i = 1}^{d + 1} \tilde{n}_i(x) \frac{\partial}{\partial y_i}\bigg|_{\varphi^{(r)}(u(x))}, \quad x \in Q.
    \end{equation}
    In other words, $\tilde{n}(x) = D\varphi^{(r)}(u(x))(\nu_u(x))$. Set $F := u^{-1}(\varphi^{-1}(B(0, r)))$. Then $\tilde{n} \in W^{1, p}(Q; \mathbb{R}^{d + 1})$ and 
    \[
    \int_Q |D\tilde{n}|^p \, dx \le C\left(\frac{1}{r^p}\int_{Q \setminus F} |du|_{\e_d, h}^p \, dx + \int_Q |du|_{\e_d, h}^p \, dx + E_b(u)\right).
    \]
    The constant $C$ depends only on $d$, $p$ and $\theta$.
\end{lemma}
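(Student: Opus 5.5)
The plan is to view $\tilde{n}$ as the composition of $\nu_u$, a colocally weakly differentiable map $Q \to TN$, with the smooth map $\Phi := D\varphi^{(r)} : TN \to \mathbb{R}^{d+1}$, $(q,v) \mapsto D\varphi^{(r)}(q)v$. Then I apply the chain rule (Proposition \ref{pr:chain_rule}, or Theorem \ref{th:colocal_derivative_existence} componentwise) to get $D\tilde{n}(x) = d\Phi_{\nu_u(x)} \circ d(\nu_u)_x$ a.e. Two things need care.

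First, $\Phi$ is not compactly supported in the fibre direction. However $|\nu_u| \le 1$, so I would replace $\Phi$ by $\Phi \cdot \chi(|v|_h^2)$ with a cutoff $\chi \equiv 1$ on $[0,1]$. This function is in $C^1_c(TN)$, because $\supp \varphi^{(r)} \subset \varphi^{-1}(\overline{B(0,2r)})$ is compact. It agrees with $\Phi$ on the image of $\nu_u$. Boundedness of $\tilde{n}$ (in fact $|\tilde{n}| \le C$) then gives $L^p$, and weak differentiability follows from the c.w.d. property of $\nu_u \in \dot W^{1,p}(Q;TN)$. So $\tilde{n} \in W^{1,p}$ once the gradient bound is established.

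Second, I estimate $d\Phi$ pointwise. On $\varphi^{-1}(B(0,r))$ we have $\Phi = D\varphi$, and there Lemma \ref{lm:game_of_connectors} gives
\[
|D\tilde{n}(x)\,w| = |K_{T\mathbb{R}^{d+1}}(d^2\varphi(d\nu_u w))| \le C\bigl(|K_{TN}(d\nu_u w)|_h + |\nu_u|_h |du_x w|_h\bigr),
\]
using $d\pi_N \circ d\nu_u = du$, since $\pi_N \circ \nu_u = u$. On $U \setminus \varphi^{-1}(B(0,r))$, write $\varphi^{(r)} = \Theta_r \circ \varphi$ with $\Theta_r(y) = r\theta(y/r)$. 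Then $\tilde{n} = D\Theta_r(\varphi(u))\,[D\varphi(u)\nu_u]$, and the product rule splits the derivative into two parts. The first is $D\Theta_r$ times the derivative of $D\varphi(u)\nu_u$, which is bounded exactly as before because $|D\Theta_r| = |D\theta| \le C$. The second is $D^2\Theta_r(\varphi(u))[D\varphi(u)du\, w, D\varphi(u)\nu_u]$, and it is bounded by $\frac{C}{r}|du\,w|_h$ since $|D^2\Theta_r| \le \|D^2\theta\|_\infty / r$. On $N \setminus U$ the map $\varphi^{(r)}$ vanishes near the points where $u$ lands (up to the chain rule holding a.e.), so the derivative is $0$ there.

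Summing the cases pointwise gives
\[
|D\tilde{n}| \le C\bigl(|K_{TN} \circ d\nu_u| + |du| + r^{-1}|du|\,\mathbf{1}_{Q\setminus F}\bigr).
\]
Raising to the $p$-th power and integrating yields the claim with $E_b(u)$.

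The main obstacle is the rigorous justification of the chain rule for the composite. It requires identifying $d(\Phi\circ\nu_u)$ with $d\Phi \circ d\nu_u$ and reading the connector computation of Lemma \ref{lm:game_of_connectors} through $\Theta_r$ outside $B(0,r)$; there I would apply that lemma to $\varphi$ and then compose with the smooth Euclidean map $\Theta_r$. A further complication is that the $\varepsilon$-isometric bounds only hold in $U$. This is harmless, because $\Phi$ vanishes outside $\supp\varphi^{(r)} \subset U$.
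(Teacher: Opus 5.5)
Your proposal is correct and follows essentially the same route as the paper. You compose $\nu_u$ with a compactly supported modification of $D\varphi^{(r)}$ on $TN$ to get boundedness and weak differentiability of $\tilde n$, apply the chain rule, and split $d^2\varphi^{(r)}$ through $\varphi^{(r)} = \Theta_r\circ\varphi$ into a $D\theta$-term controlled by Lemma \ref{lm:game_of_connectors} and an $r^{-1}D^2\theta$-term controlled by $|du|$, which appears only off $F$; your explicit observation that $d\tilde n = 0$ a.e.\ on $u^{-1}(N\setminus U)$ fills in a case the paper leaves implicit.
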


\begin{proof}
    We start by showing that $\tilde{n}$ is a c.w.d. map. Let $O$ be a bounded and open set such that $\supp(\varphi^{(r)}) \subset O \csubset U$. We define the compact set
    \[
    G := \{(q, v) \in TN : q \in \bar{O}, |v|_h \le 2\}.
    \]
    We choose $\Phi \in C_c^\infty(TN)$ such that $\Phi \equiv 1$ in $G$. Since $|\nu_u|_h = 1$ a.e. in $Q$, we have
    \begin{equation} \label{eq:euclidean_normal}
        \tilde{n} = D\varphi^{(r)} \circ \nu_u = (D\varphi^{(r)} \circ (\Phi \cdot \Id_{TN})) \circ \nu_u.
    \end{equation}
    Clearly, $D\varphi^{(r)} \circ (\Phi \cdot \Id_{TN}) \in C_c^\infty(TN; \mathbb{R}^{d + 1})$. Since $\nu_u$ is a c.w.d., this implies that $\tilde{n}$ is a c.w.d. map. Consequently, $\tilde{n}$ is c.w.d. as well. However, $\tilde{n} \in L^\infty(Q)$, so that it is actually weakly differentiable.

    Next, we compute $d\tilde{n}$. It follows from \eqref{eq:euclidean_normal_definition} and the second claim in Theorem \ref{th:colocal_derivative_existence} that
    \begin{equation} \label{eq:euclidean_normal_representation}
        d\tilde{n} = K_{T\mathbb{R}^{d + 1}} \circ d(d\varphi^{(r)} \circ \nu_u) = K_{T\mathbb{R}^{d + 1}} \circ d^2\varphi^{(r)} \circ d\nu_u \quad \text{a.e. in } Q.
    \end{equation}
    We define $\theta^{(r)}(y) := r\theta(y/r)$. Then $\varphi^{(r)} = \theta^{(r)} \circ \varphi$ in $U$. If we identify $TT\mathbb{R}^{d + 1}$ with $\mathbb{R}^{4(d + 1)}$ as in \eqref{eq:double_tangent_bundle}, then a routine computation yields for all $(y, w, Y, W) \in TT\mathbb{R}^{d + 1}$,
    \begin{equation} \label{eq:connector_of_second_differential}
        \begin{aligned}
            (K_{T\mathbb{R}^{d + 1}} \circ d^2\theta^{(r)})(y, w, Y, W) &= \sum_{i = 1}^{d + 1} \left(D\theta_i^{(r)}(y)(W) + (H\theta_i^{(r)}(y)w, Y)\right) \frac{\partial}{\partial y_i}\bigg|_y \\
            &= \sum_{i = 1}^{d + 1} \left(D\theta_i\left(\frac{y}{r}\right)(W) + \frac{1}{r}\left(H\theta_i\left(\frac{y}{r}\right)w, Y\right)\right) \frac{\partial}{\partial y_i}\bigg|_y,
        \end{aligned}
    \end{equation}
    where $H$ stands for the Hessian matrix. For $(x, v) \in T\mathbb{R}^{d + 1}$ with $x \in u^{-1}(U)$, we define $\tilde{y}(x)$, $\tilde{w}(x)$, $\tilde{Y}(x, v)$ and $\tilde{W}(x, v)$ by
    \begin{gather*}
        \tilde{y}(x) := \varphi(u(x)), \quad \tilde{w}(x) := d\varphi_{u(x)}(\nu_u(x)),\\
        d^2\varphi_{\nu_u(x)}(d(\nu_u)_x(v)) = \sum_{i = 1}^{d + 1} \tilde{Y}_i(x, v) \frac{\partial}{\partial y_i}\bigg|_{(\tilde{y}(x), \tilde{w}(x))} + \sum_{i = 1}^{d + 1} \tilde{W}_i(x, v) \frac{\partial}{\partial w_i}\bigg|_{(\tilde{y}(x), \tilde{w}(x))}.
    \end{gather*}
    Hence, \eqref{eq:euclidean_normal_representation} and \eqref{eq:connector_of_second_differential} imply
    \begin{equation} \label{eq:computation_of_differential_euclidean_normal}
        d\tilde{n}_x = \sum_{k = 1}^{d + 1} \left(D\theta_k\left(\frac{\tilde{y}(x)}{r}\right)(\tilde{W}(x, \cdot)) + \frac{1}{r}\left(H\theta_k\left(\frac{\tilde{y}(x)}{r}\right)\tilde{w}(x), \tilde{Y}(x, \cdot)\right)\right) \frac{\partial}{\partial y_k}\bigg|_{\tilde{y}(x)}
    \end{equation}
    for a.e. $x \in u^{-1}(U)$. We proceed to estimate $|d\tilde{n}_x|$. First of all, we note that
    \[
    \tilde{Y} = d\pi_{\mathbb{R}^{d + 1}} \circ d^2\varphi \circ d\nu_u, \quad \tilde{W} = K_{T\mathbb{R}^{d + 1}} \circ d^2\varphi \circ d\nu_u.
    \]
    We can simplify the expression for $\tilde{Y}$ by observing that $u = \pi_N \circ \nu_u$ and $\pi_{\mathbb{R}^{d + 1}} \circ d\varphi = \varphi \circ \pi_N$, so that
    \[
    d\pi_{\mathbb{R}^{d + 1}} \circ d^2\varphi \circ d\nu_u = d(\pi_{\mathbb{R}^{d + 1}} \circ d\varphi) \circ d\nu_u = d(\varphi \circ \pi_N) \circ d\nu_u = d\varphi \circ du \quad \text{a.e. in } u^{-1}(U).
    \]
    Therefore
    \begin{equation} \label{eq:explicit_representation}
        \tilde{Y}(x, \cdot) = d\varphi_{u(x)} \circ du_x \quad \text{for a.e. } x \in u^{-1}(U).
    \end{equation}
    On the other hand, we can use Lemma \ref{lm:game_of_connectors} to obtain
    \[
    |K_{T\mathbb{R}^{d + 1}} \circ d^2\varphi_{\nu_u(x)} \circ d(\nu_u)_x| \le C(|K_{TN} \circ d(\nu_u)_x|_{\e_d, h} + |\nu_u(x)|_h |d(\pi_N)_{\nu_u(x)} \circ d(\nu_u)_x|_{\e_d, h})
    \]
    for $x \in u^{-1}(U)$. Since $|\nu_u(x)|_h = 1$ a.e. in $Q$ and $\pi_N \circ \nu_u = u$, we have
    \begin{equation} \label{eq:explicit_estimate}
        |\tilde{W}(x, \cdot)| \le C(|K_{TN} \circ d(\nu_u)_x|_{\e_d, h} + |du_x|_{\e_d, h}) \quad \text{for a.e. } x \in u^{-1}(U).
    \end{equation}
    We can now estimate $|d\tilde{n}_x|$ for a.e. $x \in F = u^{-1}(\varphi^{-1}(B(0, r))$. Recall that $\theta \equiv \Id_{\mathbb{R}^{d + 1}}$ in $B(0, 1)$. Hence, by \eqref{eq:computation_of_differential_euclidean_normal},
    \[
    d\tilde{n}_x = \sum_{k = 1}^{d + 1} \tilde{W}_k(x, \cdot) \frac{\partial}{\partial y_k}\bigg|_{\tilde{y}(x)} \quad \text{for a.e. } x \in F.
    \]
    Therefore, \eqref{eq:explicit_estimate} yields
    \begin{equation} \label{eq:close_to_isometric_derivative_estimate}
        |d\tilde{n}_x| \le C(|K_{TN} \circ d(\nu_u)_x|_{\e_d, h} + |du_x|_{\e_d, h}) \quad \text{for a.e. } x \in F.
    \end{equation}
    To give a bound in $u^{-1}(U) \setminus F$, we simply apply the triangle inequality to \eqref{eq:computation_of_differential_euclidean_normal} and use \eqref{eq:explicit_representation} and \eqref{eq:explicit_estimate}:
    \begin{align*}
        |d\tilde{n}_x| &\le \|D\theta\|_{L^\infty(\mathbb{R}^{d + 1})}|\tilde{W}(x, \cdot)| + \frac{C}{r}|\tilde{w}(x)||\tilde{Y}(x, \cdot)|\sum_{k = 1}^{d + 1} \|H\theta_k\|_{L^\infty(\mathbb{R}^{d + 1})} \\
        &\le C\left(|K_{TN} \circ d(\nu_u)_x|_{\e_d, h} + |du_x|_{\e_d, h}\right) + \frac{C}{r}|d\varphi_{u(x)}(\nu_u(x))||d\varphi_{u(x)} \circ du_x| \\
        &\le C|K_{TN} \circ d(\nu_u)_x|_{\e_d, h} + \frac{C}{r}|du_x|_{\e_d, h} \quad \text{for a.e. } x \in u^{-1}(U) \setminus F,
    \end{align*}
    where the last constant depends on $\theta$. Thus, to conclude, the previous estimate together with \eqref{eq:close_to_isometric_derivative_estimate} gives the desired inequality:
    \[
    \int_Q |d\tilde{n}|^p \, dx \le C\left(\frac{1}{r^p}\int_{Q \setminus F} |du|_{\e_d, h}^p \, dx + \int_Q |K_{TN} \circ d\nu_u|_{e_d, h}^p + |du|_{\e_d, h}^p \, dx\right).
    \]
    This also proves that $\tilde{n} \in W^{1, p}(Q; \mathbb{R}^{d + 1})$.
\end{proof}

For the remainder of this section, we fix an open and bounded cube $Q \subset \mathbb{R}^d$ endowed with a metric $g$ such that
\[
\frac{1}{\lambda} \e_d \le g \le \lambda \e_d \quad \text{in } Q
\]
for some $\lambda > 0$. In the proof of our local quantitative rigidity result for Sobolev immersions, we rely on a generalization of the Friesecke-James-Müller rigidity estimate (\cite[Theorem 3.1]{FJM}, see also \cite[Section 2.4]{CS}) to domains endowed with a non-Euclidean metric. Compare the following result with \cite[Theorem 2.3]{LP}.

\begin{theorem} \label{thm:noneuclidean_rigidity}
    For every $u \in W^{1, p}(Q; \mathbb{R}^d)$ and $x_0 \in Q$, there exists $R \in \SO(g_{x_0}, \e_d)$ such that
    \[
    \int_Q |Du - R|_{g, \e_d}^p \, dx \le C\left(|Q|\left(\osc_Q g\right)^p + \int_Q \dist_{g, \e_d}^p(Du, \SO(g, \e_d)) \, dx\right).
    \]
    The constant $C$ depends only on $p$, $d$ and $\lambda$.
\end{theorem}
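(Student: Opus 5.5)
The idea is to freeze the metric at $x_0$ and reduce to the Euclidean Friesecke--James--Müller estimate \eqref{eq:fjm} through a linear change of variables in the target. Set $g_0 := g_{x_0}$ and let $A := g_0^{1/2}$, viewed as a linear map $(\mathbb{R}^d, g_0) \to (\mathbb{R}^d, \e_d)$. Then $A \in \SO(g_0, \e_d)$, and $R \mapsto R A^{-1}$ is a bijection from $\SO(g_0, \e_d)$ onto $\SO(d)$. The relevant comparison is the following elementary fact: for a matrix $T$, the norms $|T|_{g_x, \e_d}$ and $|T|_{g_0, \e_d}$ are comparable with constants depending only on $\lambda$. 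Moreover,
\[
\bigl||T|_{g_x,\e_d} - |T|_{g_0,\e_d}\bigr| \le C|g_x - g_0|\,|T|,
\]
and similarly
\[
\dist_{g_0,\e_d}(T, \SO(g_0,\e_d)) \le C\bigl(\dist_{g_x,\e_d}(T,\SO(g_x,\e_d)) + |g_x - g_0|(1+|T|)\bigr).
\]
To obtain the last inequality, I pick $R_x \in \SO(g_x,\e_d)$ nearest to $T$ and note that $R_x g_x^{-1/2} g_0^{1/2} \in \SO(g_0,\e_d)$. Since $g \mapsto g^{1/2}$ is Lipschitz on $\{\lambda^{-1}\e_d \le g \le \lambda \e_d\}$, this element lies within $C|g_x - g_0|$ of $R_x$. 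The $|T|$ factor is harmless: $|T|$ is itself bounded by $C(1 + \dist)$.

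The key steps, in order, are as follows.

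\emph{Step 1.} Pass to the constant metric $g_0$. Pointwise, for a.e. $x$,
\[
\dist_{g_0,\e_d}(Du, \SO(g_0,\e_d)) \le C\bigl(\dist_{g,\e_d}(Du, \SO(g,\e_d)) + \osc_Q g\bigr).
\]
Here $|g_x - g_0| \le \osc_Q g$ since $x_0 \in Q$, and the cross term $\osc_Q g \cdot |Du|$ is absorbed using $|Du| \le C(1+\dist)$ and $\osc_Q g \le C(\lambda)$.

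\emph{Step 2.} Change variables in the target. Set $v := u$ and $\tilde T := T A^{-1}$. Then $|T - R|_{g_0,\e_d} = |(T - R)A^{-1}|$ exactly, because $A^{-1}$ maps an $\e_d$-orthonormal basis to a $g_0$-orthonormal one. Likewise $\dist_{g_0,\e_d}(T,\SO(g_0,\e_d)) = \dist(TA^{-1}, \SO(d))$. Therefore I should not change the domain variable; I should instead consider the matrix field $Du\,A^{-1}$. Equivalently, take $w := u \circ A^{-1}$ on the parallelepiped $A(Q)$, so that $Dw = (Du \circ A^{-1}) A^{-1}$. Now apply \eqref{eq:fjm} to $w$ on $A(Q)$. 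This is a Lipschitz domain, the image of a cube under a linear map with bounds depending only on $\lambda$. Since the FJM constant is invariant under scaling and translation and depends continuously on bi-Lipschitz images of the cube with bounded distortion, the constant depends only on $p$, $d$, $\lambda$. The estimate yields $\tilde R \in \SO(d)$, and we set $R := \tilde R A \in \SO(g_0,\e_d)$. Changing variables back (Jacobian $\det A$ bounded above and below in terms of $\lambda$) gives
\[
\int_Q |Du - R|_{g_0,\e_d}^p \le C\int_Q \dist_{g_0,\e_d}^p(Du,\SO(g_0,\e_d)).
\]

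\emph{Step 3.} Return to $g$. Use the comparability $|Du - R|_{g,\e_d} \le C|Du - R|_{g_0,\e_d}$, then insert Step 1, using $(a+b)^p \le 2^{p-1}(a^p+b^p)$. The $\osc$ term integrates to $|Q|(\osc_Q g)^p$.

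The main obstacle is the uniformity of the FJM constant over the family of parallelepipeds $A(Q)$. I would handle it by noting that \eqref{eq:fjm} on a domain $D$ transfers to $L(D)$, for a linear map $L$ with $|L|, |L^{-1}| \le \Lambda$, with a constant depending on $\Lambda$. One way is to apply \eqref{eq:fjm} on the unit cube to $w \circ L$ and absorb the distortion; but $\SO(d)$ is not preserved under right multiplication by $L$, so this is not immediate. The cleaner route is the one taken in Step 2: apply FJM directly on $A(Q)$ and cite that the constant is uniform over bi-Lipschitz images of a cube with controlled constants, as in \cite{FJM}, or \cite[Section 2.4]{CS}. The remaining manipulations are routine.
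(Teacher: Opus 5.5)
Your argument is correct. You freeze $g$ at $x_0$, pass through $A=g_{x_0}^{1/2}\in\SO(g_{x_0},\e_d)$, apply \eqref{eq:fjm} to $u\circ A^{-1}$ on the parallelepiped $A(Q)$, and pay $|Q|(\osc_Q g)^p$ when unfreezing; this is the standard reduction. The paper gives no proof of this theorem itself, deferring to \cite{B} and comparing with \cite[Theorem 2.3]{LP}, so your route is the natural one it points to rather than something checked line by line against it.

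The one point to state precisely is the one you flagged. What you need is the remark in \cite{FJM} that the rigidity constant is invariant under dilations and can be chosen uniformly over bi-Lipschitz images of a fixed domain with controlled Lipschitz constants; "continuous dependence" is not the right formulation. That remark applies here because $\|A\|\le\sqrt{\lambda}$ and $\|A^{-1}\|\le\sqrt{\lambda}$.
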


A proof of the theorem above is given in \cite{B}. Finally, we state and prove the local quantitative rigidity result for Sobolev immersions. 

\begin{theorem} \label{thm:local_quantitative_rigidity}
    Let $(U, \varphi)$ be an $\varepsilon$-isometric chart on $N$ and let $\varphi^{(r)}$ be its extension by $\theta$. Let $u \in \Imm_p(Q; N)$. Let $\delta \in (0, 1)$ and assume that $F \subset Q$ is a Lebesgue measurable set such that
    \[
    u(F) \subset \varphi^{-1}(B(0, r)), \quad \frac{|Q \setminus F|}{|Q|} \le \delta^p.
    \]
    Then there exists $x_0 \in Q$ and $R \in \Ort(g_x, \e_{d + 1})$ such that
    \[
    \int_Q |D(\varphi^{(r)} \circ u) - R|_{g, \e_{d + 1}}^p \, dx \le C\left(|Q|\left(\delta^p + \varepsilon^p + \left(\osc_Q g\right)^p\right) + E_s(u) + \frac{\diam^p(Q)}{1 - \delta^p} \mathcal{E}(u)\right),
    \]
    where
    \[
    \mathcal{E}(u) := \frac{1}{r^p}\int_{Q \setminus F} |du|_{\e_d, h}^p \, dx + \int_Q |du|_{\e_d, h}^p \, dx + E_b(u).
    \]
    The constant $C$ depends only on $d$, $p$, $\lambda$ and $\theta$.
\end{theorem}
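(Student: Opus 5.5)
Set $v := \varphi^{(r)} \circ u : Q \to \mathbb{R}^{d+1}$ and $\tilde n := D\varphi^{(r)}(u)(\nu_u)$, as in Lemma \ref{lm:euclidean_normal_bound}. The plan is to reduce to the equidimensional estimate of Theorem \ref{thm:noneuclidean_rigidity} by projecting $v$ onto a hyperplane orthogonal to an averaged normal.

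\textbf{Step 1: fix a reference normal.} Since $v$ is the composition of $u$ with a compactly supported smooth map, it lies in $W^{1,p}(Q;\mathbb{R}^{d+1})$ and $Dv = D\varphi^{(r)}(u)\circ du$. Lemma \ref{lm:euclidean_normal_bound} gives $\tilde n\in W^{1,p}$ with $\int_Q|D\tilde n|^p\le C\mathcal{E}(u)$. By the Poincaré inequality on the cube, which is $\int_Q\int_Q|\tilde n(x)-\tilde n(z)|^p\le C\diam^p(Q)|Q|\int_Q|D\tilde n|^p$, there is a point, or more simply a good $z\in F$, such that
\[
\int_Q|\tilde n-\tilde n(z)|^p\,dx\le C\frac{\diam^p(Q)}{1-\delta^p}\,\mathcal{E}(u).
\]
To get this, average over $z\in F$; this costs the factor $|Q|/|F|\le (1-\delta^p)^{-1}$. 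On $F$ we have $\varphi^{(r)}=\varphi$ and $|\nu_u|_h=1$, so $\tilde n(z)$ has Euclidean length within $C\varepsilon$ of $1$. Let $n:=\tilde n(z)/|\tilde n(z)|$, let $\Pi=n^\perp$, and let $P$ be the orthogonal projection onto $\Pi\cong\mathbb{R}^d$, with orientation chosen so that $(\Pi,n)$ is positive.

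\textbf{Step 2: compare $Dv$ with its projection.} On $F$, $(\tilde n,Dv\,\cdot)$ vanishes up to the metric error: by \eqref{eq:inner_products_difference}, $|(\tilde n,Dv\,w)|\le C\varepsilon|Dv\,w|$, since $h$-orthogonality holds exactly. Hence pointwise on $F$,
\[
|(n,Dv)|\le C\varepsilon|Dv|+C|\tilde n-n||Dv|.
\]
Because $Dv$ is not bounded, I will split each point according to whether $|Dv|\le C$. By \eqref{eq:distance_from_o_isometric_chart}, $|Dv|\le C+C\dist(Dv,\Ort)$ and $\dist_{g,\e}(Dv,\Ort(g,\e_{d+1}))\le C(\dist_{g,h}(du,\Ort)+\varepsilon)$ on $F$. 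On $Q\setminus F$ we bound crudely by $|Dv|+C$, whose $p$-th power integrates to at most $C(|Q|\delta^p+\int_{Q\setminus F}|du|^p)\le C(|Q|\delta^p+\mathcal{E}(u))$, using $r\le$ bounded. Here the assumption $r\lesssim1$ may need care; otherwise I keep the $r^{-p}$ factor.

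\textbf{Step 3: apply rigidity to the projection.} I then apply Theorem \ref{thm:noneuclidean_rigidity} to $PDv$, a map $Q\to\Pi\cong\mathbb{R}^d$. The key point is to bound $\dist(PDv,\SO(g,\e_d))$ by $\dist(Dv,\Ort(g,\e_{d+1}))+C|(n,Dv)|$. Here $PDv$ is close to an isometry into $\Pi$, but it may be orientation reversing. The orientation is fixed by the condition that $\tilde n$ is the oriented normal and $\tilde n\approx n$ (Lemma \ref{lm:non_euclidean_to_euclidean}(2)), so near-isometries with the wrong orientation are far from $\SO$ only where $\tilde n$ is far from $n$; this is again controlled by Step 1. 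This gives $R_0\in\SO(g_{x_0},\e_d)$, and then $R:=\iota_\Pi R_0\in\Ort(g_{x_0},\e_{d+1})$. Finally,
\[
|Dv-R|\le|PDv-R_0|+|(n,Dv)|,
\]
and summing the three contributions yields the claim.

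\textbf{Main obstacle.} The main obstacle is the product term $|\tilde n-n||Dv|$ together with the orientation issue in Step 3. I would handle the product by the bound $|\tilde n-n|\le C$ combined with $|Dv|\le C+\dist$: the piece $|\tilde n-n|\cdot C$ is controlled by Step 1, and the piece $C\dist$ by $E_s$ on $F$ together with $\varepsilon|Q|$.
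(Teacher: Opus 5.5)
Your outline follows the paper's proof. You choose a reference normal by the Poincaré inequality for $\tilde n$, project onto the orthogonal hyperplane, apply Theorem~\ref{thm:noneuclidean_rigidity}, use an orientation-sensitive projection estimate on $F$, and bound crudely on $Q\setminus F$. Averaging over $z\in F$ and normalizing $\tilde n(z)$, instead of intersecting a Chebyshev set with $F$ and using the Euclidean normal $n(x_0)$, is a harmless and slightly cleaner variant. Two steps, however, are wrong as written and need correcting.

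\textbf{The crude bound on $Q\setminus F$.} There is no $r$-dependence to worry about here. Since $D\varphi^{(r)}=D\theta(\varphi/r)\,D\varphi$, you have $|D\varphi^{(r)}|\le C(\theta)$ uniformly in $r$. What you must not do is bound $\int_{Q\setminus F}|du|^p_{\e_d,h}$ by $\mathcal{E}(u)$. In the statement, $\mathcal{E}(u)$ appears only with the small weight $\diam^p(Q)/(1-\delta^p)$, so an unweighted $\mathcal{E}(u)\ge\int_Q|du|^p$ does not prove the claimed inequality. It would also ruin the later summation over cubes. Use instead $|du|_{\e_d,h}\le C(1+\dist_{g,h}(du,\Ort))$. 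This gives $\int_{Q\setminus F}(|Dv|+C)^p\,dx\le C(\delta^p|Q|+E_s(u))$. The same bound also handles the $Q\setminus F$ part of $\int_Q\dist^p(PDv,\SO)$ that enters Theorem~\ref{thm:noneuclidean_rigidity}.

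\textbf{The key inequality in Step 3 is false.} You claim $\dist(PDv,\SO)\le\dist(Dv,\Ort)+C|(n,Dv)|$. Take $Dv(x)$ to be an isometry onto $\Pi$ with the wrong orientation. Then both terms on the right vanish, while $PDv(x)=Dv(x)$ stays at distance at least $c>0$ from $\SO$. The correct pointwise estimate is \eqref{eq:bound_for_distance_from_oriented_rotations} of Lemma~\ref{lm:error_in_projection_and_distance}:
\[
\dist(PDv(x),\SO)\le\dist(Dv(x),\Ort)+C|n_x-n|,
\]
where $n_x$ is the positively oriented Euclidean unit normal of $Dv(x)(\mathbb{R}^d)$.

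To prove it, take the polar factor of $Dv(x)$ with respect to $g_x$. It is an orientation-preserving isometry onto $n_x^\perp$ and realizes $\dist(Dv(x),\Ort)$. Compose it with the rotation taking $n_x$ to $n$, which is $C|n_x-n|$-close to the identity; the case $|n_x-n|\ge1$ is trivial.

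By Lemma~\ref{lm:non_euclidean_to_euclidean}(2), $|n_x-\tilde n(x)|\le C\varepsilon$ on $F$. The error is therefore $C(\varepsilon+|\tilde n(x)-n|)$, which your Step 1 controls. Your remark about orientation points in this direction, but this estimate has to replace the $|(n,Dv)|$ bound in the rigidity input, not supplement it. The $|(n,Dv)|$ term is still the right quantity for $|Dv-PDv|$, and your treatment of that term is fine.
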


\begin{proof}
    Set $\tilde{u} := \varphi^{(r)} \circ u$. Then $\tilde{u}$ is weakly differentiable, since $u$ is a c.w.d. map. In fact, $\tilde{u} \in W^{1, p}(Q; \mathbb{R}^{d + 1})$. Moreover, $\rank D\tilde{u} = d$ a.e. in $F$ because the restriction of $\varphi^{(r)}$ to $\varphi^{-1}(B(0, r))$ is a diffeomorphism. Thus, for a.e. $x \in F$, there exists a unit vector $n(x) \in \mathbb{R}^{d + 1}$ such that $n(x)$ is orthogonal to $D\tilde{u}(x)(\mathbb{R}^d)$ and $(\partial_{x_1}\tilde{u}(x), \dots, \partial_{x_d} \tilde{u}(x), n(x))$ is positively oriented. We also define $\tilde{n}$ to be the pushforward of $\nu_u$ under $\varphi^{(r)}$ as in \eqref{eq:euclidean_normal_definition}. Our strategy to prove the estimate is to project $\tilde{u}$ onto a $d$-dimensional subspace of $\mathbb{R}^{d + 1}$ and to apply the non-Euclidean rigidity estimate in Theorem \ref{thm:noneuclidean_rigidity}. We choose a $d$-dimensional subspace such that the error in projection is bounded by the $L^p$-norm of $D\tilde{n}$. Since $u$ is represented almost isometrically by $\tilde{u}$ in $F$ and $F$ has a relatively large measure in $Q$, the final estimate will be sharp up to some truncation error in $Q \setminus F$.
    
    By Lemma \ref{lm:euclidean_normal_bound}, $\tilde{n} \in W^{1, p}(Q; \mathbb{R}^{d + 1})$, so that we can apply Poincaré's inequality to get
    \[
    \int_Q \int_Q |\tilde{n}(x) - \tilde{n}(z)|^p \, dx \, dz \le C \diam^p(Q) |Q| \int_Q |D\tilde{n}(x)|^p \, dx.
    \]
    Treating the inner integral on the left-hand side as a function of $z$ and applying Chebyshev's inequality to it, we see that the set of all $z \in Q$ satisfying
    \begin{equation} \label{eq:projection_point}
        \int_Q |\tilde{n}(x) - \tilde{n}(z)|^p \, dx \le \frac{2C}{1 - \delta^p} \diam^p(Q) \int_Q |D\tilde{n}(x)|^p \, dx.
    \end{equation}
    is at least $|Q|(1 - \delta^p)/2$. We observe that for a.e. $x \in F$, $n(x)$ and $\nu_u(x)$ satisfy the hypotheses of part 2 of Lemma \ref{lm:non_euclidean_to_euclidean} with $\Pi = D\tilde{u}(x)(\mathbb{R}^d)$. Hence, 
    \begin{equation} \label{eq:difference_in_normals}
        |n(x) - \tilde{n}(x)| \le C\varepsilon \quad \text{for a.e. } x \in F.
    \end{equation}
    Therefore, we can find $x_0 \in F$ at which both \eqref{eq:projection_point} and \eqref{eq:difference_in_normals} are satisfed. Consequently,
    \begin{align*}
        \int_F |n(x) - n(x_0)|^p \, dx \le C\varepsilon^p|F| + C\int_F |\tilde{n}(x) - &\tilde{n}(x_0)|^p \, dx \\
        &\le C\varepsilon^p|Q| + C\frac{\diam^p(Q)}{1 - \delta^p} \int_Q |D\tilde{n}(x)|^p \, dx.
    \end{align*}
    Then by Lemma \ref{lm:euclidean_normal_bound}, we obtain
    \begin{equation} \label{eq:euclidean_normal_variation}
        \int_F |n(x) - n(x_0)|^p \, dx \le C\varepsilon^p|Q| + C\frac{\diam^p(Q)}{1 - \delta^p} \mathcal{E}(u).
    \end{equation}
    We denote $\lspan(n(x_0))^\perp$ briefly by $\Pi_0$. Let $P$ be the orthogonal projection from $\mathbb{R}^{d + 1}$ onto $\Pi_0$. Let $T \in \SO((\mathbb{R}^d, \e_d), (\Pi_0, \e_{d + 1}))$ and set $v := T^{-1} \circ P \circ \tilde{u}$. Then $v \in W^{1, p}(Q; \mathbb{R}^d)$, and we can apply Theorem \ref{thm:noneuclidean_rigidity} to obtain $\bar{R} \in \SO(g_{x_0}, \e_d)$ such that
    \begin{equation} \label{eq:rigidity_applied_to_v_again}
        \int_Q |Dv - \bar{R}|_{g, \e_d}^p \, dx \le C \left(|Q| \left(\osc_Q g\right)^p + \int_Q \dist_{g, \e_d}^p(Dv, \SO(g, \e_d)) \, dx\right).
    \end{equation}
    Set $R := T \circ \bar{R}$. Then $R \in \SO((\mathbb{R}^d, g_{x_0}), (\Pi_0, \e_{d + 1}))$, and by \eqref{eq:error_in_projection} in Lemma \ref{lm:error_in_projection_and_distance} (see below), we have for a.e. $x \in F$,
    \begin{equation} \label{eq:change_from_v_to_u}
        \begin{aligned}
            |D\tilde{u}(x) - R|_{g, \e_{d + 1}} &\le |D\tilde{u}(x) - P D\tilde{u}(x)|_{g, \e_{d + 1}} + |P D\tilde{u}(x) - R|_{g, \e_{d + 1}} \\
            &\le |D\tilde{u}(x)|_{g, \e_{d + 1}} |n(x) - n(x_0)| + |Dv(x) - \bar{R}|_{g, \e_d} \\
            &\le \sqrt{d} |n(x) - n(x_0)| + 2 \dist_{g, \e_{d + 1}}(D\tilde{u}(x), \Ort(g_x, \e_{d + 1})) + |Dv(x) - \bar{R}|_{g, \e_d}
        \end{aligned}
    \end{equation}
    Furthermore, \eqref{eq:bound_for_distance_from_oriented_rotations} gives
    \begin{multline*}
        \dist_{g, \e_d}(Dv(x), \SO(g_x, \e_d)) = \dist_{g, \e_{d + 1}}(P D\tilde{u}(x), \SO((\mathbb{R}^d, g_x), (\Pi_0, \e_{d + 1})) \\
        \le \dist_{g, \e_{d + 1}}(D\tilde{u}(x), \Ort(g_x, \e_{d + 1})) + C|n(x) - n(x_0)|.
    \end{multline*}
    for a.e. $x \in F$. Hence, we arrive at
    \begin{equation} \label{eq:rigidity_on_good_set}
        \begin{aligned}
            \int_F |D\tilde{u} - R&|_{g, \e_{d + 1}}^p \, dx \\
            &\le C\left(|Q| \left(\osc_Q g\right)^p + \int_F \dist_{g, \e_{d + 1}}^p(D\tilde{u}, \Ort(g_, \e_{d + 1})) \, dx + \int_F |n(x) - n(x_0)|^p \, dx\right).
        \end{aligned}
    \end{equation}
    If we identify $T_x\mathbb{R}^d$ with $\mathbb{R}^d$, then $D\tilde{u}(x) = D\varphi(u(x)) \circ du_x$ for a.e. $x \in F$. Thus, part 1 of Lemma \ref{lm:non_euclidean_to_euclidean} implies
    \[
    \dist_{g, \e_{d + 1}}(D\tilde{u}(x), \Ort(g_x, \e_{d + 1})) \le \sqrt{1 + \varepsilon} \dist_{g, h}(du_x, \Ort((\mathbb{R}^d, g_x), (T_{u(x)}N, h_{u(x)}))) + C\varepsilon
    \]
    for a.e. $x \in F$. Consequently, we obtain from \eqref{eq:rigidity_on_good_set} that
    \begin{equation} \label{eq:rigidity_on_good_set_with_energy}
        \int_F |D\tilde{u} - R|_{g, \e_{d + 1}}^p \, dx \le C\left(|Q|\left(\varepsilon^p + \left(\osc_Q g\right)^p\right) + E_s(u) + \int_F |n(x) - n(x_0)|^p \, dx\right).
    \end{equation}
    To conclude the proof, we apply the triangle inequality in $Q \setminus F$:
    \begin{equation} \label{eq:rigidity_on_bad_set}
        \int_{Q \setminus F} |D\tilde{u} - R|_{g, \e_{d + 1}}^p \, dx \le C\left(\int_{Q \setminus F} (1 + \varepsilon)^\frac{p}{2}|du|_{g, h}^p + 1\, dx \right) \le C(E_s(u) + \delta^p|Q|).
    \end{equation}
    Here $C$ depends on $\theta$. Finally, combining \eqref{eq:euclidean_normal_variation}, \eqref{eq:rigidity_on_good_set_with_energy} and \eqref{eq:rigidity_on_bad_set} yields the desired inequality.    
\end{proof}

In the proof above, we used the following lemma to estimate the error due to projecting $\tilde{u}$ onto $\Pi_0$. A more general version of the lemma and its proof is given in \cite[Proposition 3.8]{B}.

\begin{lemma} \label{lm:error_in_projection_and_distance}
    Let $g_0$ be a constant metric on $\mathbb{R}^d$. Let $n_0, n \in \mathbb{R}^{d + 1}$ be unit vectors and set $\Pi := \lspan(n)^\perp$, $\Pi_0 := \lspan(n_0)^\perp$. Let $T \in \mathcal{L}(\mathbb{R}^d, \Pi)$ for some $x \in \mathbb{R}^d$ and $y \in \mathbb{R}^{d + 1}$. If $P$ is the orthogonal projection from $\mathbb{R}^{d + 1}$ onto $\Pi_0$, then
    \begin{equation} \label{eq:error_in_projection}
        |PT - T|_{g_0, \e_{d + 1}} \le |T|_{g_0, \e_{d + 1}} |n_0 - n|. 
    \end{equation}
    If $T$ is orientation preserving, then
    \begin{equation} \label{eq:bound_for_distance_from_oriented_rotations}
        \dist_{g_0, e_{d + 1}}(PT, \SO((\mathbb{R}^d, g_0), (\Pi_0, \e_{d + 1}))) \le \dist_{g_0, e_{d + 1}}(T, \Ort(g_0, \e_{d + 1})) + C|n_0 - n|.
    \end{equation}
\end{lemma}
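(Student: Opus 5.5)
The proof is elementary linear algebra in $\mathbb{R}^{d+1}$. Fix a $g_0$-orthonormal basis $(e_1,\dots,e_d)$ of $\mathbb{R}^d$ and write $w_i := Te_i \in \Pi$, so that $|T|_{g_0,\e_{d+1}}^2 = \sum_i |w_i|^2$.

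\textbf{Estimate \eqref{eq:error_in_projection}.} Since $P = \Id - n_0 \otimes n_0$, we have $PT - T = -n_0 (n_0, T\,\cdot)$. Each $w_i$ lies in $\Pi$, so $(n, w_i) = 0$ and
\[
(n_0, w_i) = (n_0 - n, w_i).
\]
This gives $|(n_0,w_i)| \le |n_0 - n|\,|w_i|$. Squaring, summing over $i$ and using $|n_0| = 1$ yields $|PT - T|_{g_0,\e_{d+1}} \le |n_0 - n|\,|T|_{g_0,\e_{d+1}}$.

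\textbf{Estimate \eqref{eq:bound_for_distance_from_oriented_rotations}.} I interpret ``$T$ orientation preserving'' as follows, consistent with its use in the proof of Theorem \ref{thm:local_quantitative_rigidity}: $(Te_1,\dots,Te_d,n)$ is positively oriented in $\mathbb{R}^{d+1}$ whenever $(e_i)$ is positively oriented. Orient $\Pi_0$ by $n_0$ in the same way.

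\emph{Reduction to the case $|n-n_0|$ small.} First, the bound is trivial when $|n - n_0| \ge c_d$ for a fixed small $c_d$. Indeed, pick any $R_0 \in \SO((\mathbb{R}^d,g_0),(\Pi_0,\e_{d+1}))$. Then
\[
|PT - R_0| \le |T| + \sqrt d \le \dist(T,\Ort) + 2\sqrt d,
\]
and $2\sqrt d \le C|n - n_0|$ once $C \ge 2\sqrt d/c_d$.

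\emph{The case $|n-n_0| < c_d$.} Let $Q \in \SO(d+1)$ be the rotation in the plane $\lspan(n,n_0)$ taking $n$ to $n_0$ and fixing the orthogonal complement of that plane. It satisfies $|Q - \Id| \le C|n - n_0|$, and it maps $\Pi$ onto $\Pi_0$ preserving the induced orientations, because $Qn = n_0$ and $\det Q = 1$.

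Let $S \in \Ort(g_0,\e_{d+1})$ attain $\dist(T,\Ort(g_0,\e_{d+1}))$. Its image $S(\mathbb{R}^d)$ need not be $\Pi$. So project it: let $S' := \Pi_\Pi S$, with $\Pi_\Pi$ the orthogonal projection onto $\Pi$. Replace $S'$ by its polar (orthogonal) factor $\tilde S \in \Ort((\mathbb{R}^d,g_0),(\Pi,\e_{d+1}))$.

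\emph{Main obstacle: controlling $|T - \tilde S|$.} Since $T$ already maps into $\Pi$, $|T - S'| \le |T - S|$. The polar factor is the closest isometry into $\Pi$, so $|S' - \tilde S| \le |S' - S|$ up to a dimensional constant. In addition, $|S' - S| \le |T - S|$, because $S' - S = -(\Id - \Pi_\Pi)S$ and $(\Id - \Pi_\Pi)T = 0$. Together these give $|T - \tilde S| \le C\,\dist(T,\Ort)$. I expect this step, and the orientation issue below, to be the hardest part.

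\emph{Fixing orientation.} If $\tilde S$ is orientation reversing on $\Pi$, compose it with a reflection inside $\Pi$. This costs a bounded amount. When $\dist(T,\Ort)$ is small it is unnecessary: $T$ is then close to an isometry and preserves orientation, so the nearest isometry $\tilde S$ preserves it too. When $\dist(T,\Ort)$ exceeds a dimensional constant, the extra bounded cost is absorbed into $C\,\dist(T,\Ort)$. To keep the coefficient of $\dist(T,\Ort)$ equal to $1$, as the statement requires, I compare $PT$ directly with $Q\tilde S$ only in the regime where $\dist(T,\Ort)$ is small, and use the crude bound otherwise.

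\emph{Conclusion.} The map $Q\tilde S$ belongs to $\SO((\mathbb{R}^d,g_0),(\Pi_0,\e_{d+1}))$, and
\[
|PT - Q\tilde S| \le |PT - T| + |T - \tilde S| + |(\Id - Q)\tilde S|.
\]
The first term is at most $|T|\,|n_0 - n|$ by \eqref{eq:error_in_projection}. The last term is at most $C|n_0 - n|$.

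The remaining difficulty is that the middle term yields $C\,\dist(T,\Ort)$, whereas the statement has coefficient $1$, and $|T|$ is unbounded. I would handle both issues using $|T| \le \sqrt d + \dist(T,\Ort)$, with the case split on whether $\dist(T,\Ort)$ is small. If obtaining the exact coefficient $1$ fails, a constant multiple of $\dist$ suffices for the application in Theorem \ref{thm:local_quantitative_rigidity}.
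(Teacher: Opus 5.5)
\textbf{There is a gap: part (2) is not proved with the stated coefficient $1$.} Your proof of \eqref{eq:error_in_projection} is correct. For \eqref{eq:bound_for_distance_from_oriented_rotations}, however, your argument only yields $\dist(PT,\SO((\mathbb{R}^d,g_0),(\Pi_0,\e_{d+1}))) \le C\,\dist(T,\Ort(g_0,\e_{d+1})) + C|n_0-n|$. The case split on $\delta := \dist(T,\Ort(g_0,\e_{d+1}))$ cannot restore the coefficient $1$, for three reasons:
\begin{itemize}
\item For small $\delta$ you still only have $|T-\tilde S| \le |T-S'|+|S'-\tilde S| \le 2\delta$.
\item The term $|PT-T| \le |T|\,|n_0-n| \le (\sqrt d+\delta)|n_0-n|$ contributes a further multiple of $\delta$.
\item For large $\delta$, the crude bound $\delta+2\sqrt d$ is not of the form $\delta+C|n_0-n|$ when $|n_0-n|$ is small.
\end{itemize}
The case $n=n_0$ shows what is really at stake. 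Then $PT=T$, and the claim becomes the exact inequality $\dist(T,\SO((\mathbb{R}^d,g_0),(\Pi,\e_{d+1}))) \le \dist(T,\Ort(g_0,\e_{d+1}))$. No chain of lossy triangle inequalities produces that. Separately, your step $|S'-\tilde S|\le|S'-S|$ is not justified by ``the polar factor is the closest isometry into $\Pi$'', because $S$ does not map into $\Pi$. The inequality is true, but for exactly the reason below.

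\textbf{The missing fact and the fix.} For $T$ with values in $\Pi$, the nearest isometric embedding into $\mathbb{R}^{d+1}$ already lies in $\Pi$ and has the right orientation. To see this, take $S$ with $S^*S=\Id$ (adjoint taken with respect to $g_0$). Then $|T-S|^2=|T|^2+d-2\operatorname{tr}(S^*T)$. By the singular value decomposition of $T$, $\operatorname{tr}(S^*T)\le\sum_i\sigma_i(T)$, with equality for the polar factor $U:=T(T^*T)^{-1/2}$. The map $U$ has the same range as $T$, so it takes values in $\Pi$. Moreover $T=U(T^*T)^{1/2}$ with a positive definite factor, so $U$ is orientation preserving. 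Hence $U\in\SO((\mathbb{R}^d,g_0),(\Pi,\e_{d+1}))$ and $|T-U|=\delta$.

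Now project the difference rather than $T$ itself. Use your rotation $Q$, or a half-turn in a plane containing $n$ if $n_0=-n$. Then
\[
|PT-QU| \le |P(T-U)| + |PU-U| + |(\Id-Q)U| \le \delta + 2\sqrt d\,|n_0-n|.
\]
This uses three facts: $P$ has operator norm at most $1$; \eqref{eq:error_in_projection} applied to $U$, together with $|U|=\sqrt d$; and the operator norm of $\Id-Q$ equals $|n_0-n|$. Since $QU\in\SO((\mathbb{R}^d,g_0),(\Pi_0,\e_{d+1}))$, this proves the lemma with $C=2\sqrt d$. The unbounded quantity $|T|$ never appears, and no case distinction or orientation repair is needed.

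You are right that a constant multiple of $\dist(T,\Ort)$ would suffice in Theorem \ref{thm:local_quantitative_rigidity}, since all constants are absorbed there. It does not, however, prove the lemma as stated.
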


\section{Asymptotic rigidity} \label{sec:asymptotic}

In the next theorem, we state our main result.

\begin{theorem} \label{thm:asymptotic_rigidity}
        Let $(M, g)$ and $(N, h)$ be oriented, connected, complete Riemannian manifolds of dimensions $d$ and $d + 1$, respectively. Let $M$ be compact. Assume $(u_k) \subset \Imm_p(M; N)$ satisfies
        \begin{equation} \label{eq:vanishing_and_bounded_energies}
            \lim_{k \to \infty} E_s(u_k) = 0, \quad \limsup_{k \to \infty} E_b(u_k) < \infty
        \end{equation}
        and
        \begin{equation} \label{eq:uniformly_bounded_in_lp}
            \limsup_{k \to \infty} \int_M d_h^p(u_k, q_0) \, d\vol_g < \infty
        \end{equation}
        for some $q_0 \in N$. Then there exists a subsequence $(u_{k_j})$ and $u \in \Imm_p(M; N)$ such that
        \begin{equation} \label{eq:convergence_to_local_isometry}
            u_{k_j} \to u \text{ in } \dot{W}^{1, p}(M; N), \quad du_x \in \Ort((T_xM, g_x), (T_{u(x)}N, h_{u(x)})) \text{ for a.e. } x \in M.
        \end{equation}
        Furthermore, if $S : TM \rightarrow TM$ is the reference shape operator with $|S|_g \in L^{p'}(M)$, where $p'$ is the Hölder conjugate of $p$, and
        \begin{equation}
            \lim_{k \to \infty} E_b^S(u_k) = 0,
        \end{equation}
        then $S = S_u$ a.e. in $M$.
\end{theorem}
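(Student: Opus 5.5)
We first extract a pointwise limit. Since $E_s(u_k)\to 0$ and $|du_k|_{g,h}\le \sqrt d+\dist_{g,h}(du_k,\Ort)$, the energies $\int_M|du_k|^p_{g,h}$ are uniformly bounded. Cover $M$ by finitely many charts whose images contain closed cubes with $\lambda^{-1}\e_d\le g\le\lambda\e_d$. By \eqref{eq:uniformly_bounded_in_lp}, Theorem \ref{thm:rellich_kondrachov} applies on each cube. A diagonal argument then gives a subsequence and a map $u$ with $d_h(u_k,u)\to0$ in $L^p(M)$ and pointwise a.e. We apply the same argument to $\nu_{u_k}$ as maps into the complete manifold $TN$ with the Sasaki metric. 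Here $|d\nu_{u_k}|_\sigma^2=|du_k|^2+|K_{TN}\circ d\nu_{u_k}|^2$ is bounded in $L^p$ by \eqref{eq:vanishing_and_bounded_energies}, and $d_\sigma(\nu_{u_k},0_{q_0})\le 1+d_h(u_k,q_0)$ by Proposition \ref{pr:triangle_inequality}. This gives $\nu_{u_k}\to\nu$ a.e. with $|\nu|_h=1$, and Lemma \ref{lm:closure_under_pointwise_convergence} yields $u\in\dot W^{1,p}(M;N)$ and $\nu\in\dot W^{1,p}(M;TN)$.

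The heart of the proof is the strong convergence $d_{\sigma,p}(u_k,u)\to0$, which we prove locally with Theorem \ref{thm:local_quantitative_rigidity}. Fix $\eta>0$. For a.e. $x\in M$, with $q=u(x)$, take an $\varepsilon$-isometric normal chart $(U,\varphi)$ centered at $q$ and an extension $\varphi^{(r)}$. Subdivide chart cubes dyadically into cubes $Q$ of side $s$. For most $Q$ (in measure, as $s\to0$), $u$ is close to a single point $q_Q$ on a set of nearly full measure; this follows from Lemma \ref{lm:poincare_manifold}, Chebyshev, and Lebesgue points of $u$. By pointwise convergence, $F_k=u_k^{-1}(\varphi^{-1}(B(0,r)))\cap Q$ then satisfies $|Q\setminus F_k|\le\delta^p|Q|$ for $k$ large. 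On each such cube, Theorem \ref{thm:local_quantitative_rigidity} bounds $\int_Q|D(\varphi^{(r)}\circ u_k)-R_{k,Q}|^p$ by
\[
C\bigl(|Q|(\delta^p+\varepsilon^p+(\osc_Q g)^p)+E_s(u_k)+s^p\mathcal E(u_k)\bigr).
\]
The term $\frac{1}{r^p}\int_{Q\setminus F_k}|du_k|^p$ is harmless: $|du_k|$ is close to $\sqrt d$ off a set of small energy, so it is at most $Cr^{-p}(\delta^p|Q|+E_s(u_k))$. Summing over cubes, with $r$ fixed per chart (finitely many by compactness of the relevant region), we choose $s$, then $\varepsilon,\delta$, then $k$. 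This makes $D(\varphi^{(r)}\circ u_k)$ within $\eta$ in $L^p$ of a piecewise-constant matrix field. Since $\varphi^{(r)}\circ u_k\to\varphi^{(r)}\circ u$ in $L^p$, the field is nearly the same for all large $k$. Hence $(D(\varphi^{(r)}\circ u_k))$ is Cauchy in $L^p$ up to $O(\eta)$.

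To transfer this back to the Sasaki distance, use Corollary \ref{co:maps_sasaki_distance} with short coordinate curves in an $\varepsilon$-isometric chart. This gives $d_\sigma(du_k,du_l)\lesssim |D\tilde u_k-D\tilde u_l|+(1+|du|)\,d_h(u_k,u_l)$ on $F_k\cap F_l$, plus a small contribution from bad sets. So $(u_k)$ is Cauchy in $d_{\sigma,p}$, and Proposition \ref{pr:completeness_of_sobolev_space} gives a limit, which must be $u$ by the pointwise convergence. Passing to an a.e. convergent subsequence of the differentials, $\dist(du_k,\Ort)\to0$ a.e. implies $du_x\in\Ort$ a.e. Then $\rank du=d$, $\nu=\nu_u$ by continuity of the normal under convergence of $(u,du)$, and $u\in\Imm_p$.

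For the shape operator, repeat the argument for $\nu_{u_k}$ in the Sasaki setting: the covariant derivatives $K_{TN}\circ d\nu_{u_k}=du_k\circ S_{u_k}$ are bounded in $L^p$. In charts, $\tilde n_k\to\tilde n$ in $L^p$ with $D\tilde n_k$ bounded, so $D\tilde n_k\rightharpoonup D\tilde n$. On $F_k$, Lemma \ref{lm:game_of_connectors} relates $D\tilde n_k$ to $du_k\circ S_{u_k}$ up to a term $\Gamma\cdot du_k$ that converges strongly. Since $E_b^S(u_k)\to0$ and $du_k\to du$ strongly, $du_k\circ S_{u_k}\rightharpoonup du\circ S$, where $|S|\in L^{p'}$ is used so that $du_k\circ S\to du\circ S$ in the appropriate weak sense. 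Hence $du\circ S_u=du\circ S$, and injectivity of $du$ gives $S=S_u$. The main obstacle is the strong convergence step, specifically controlling the truncation sets $Q\setminus F_k$ uniformly in $k$ when $N$ is noncompact. I would handle it by using \eqref{eq:uniformly_bounded_in_lp} to confine $u$ to a compact set $K\subset N$ up to small measure, so that finitely many charts with a uniform radius $r$ suffice.
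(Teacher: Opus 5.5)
Your strong-convergence argument follows the paper's proof. The paper fixes $\varepsilon$ and uses \eqref{eq:uniformly_bounded_in_lp} with Hopf--Rinow to cover a ball $B(q_0,\tau)$ carrying most of $u$ by finitely many $\varepsilon$-isometric charts with uniform radii $r_0,\rho_0$. It then splits a fine partition into good and bad cubes. On good cubes it combines the same ingredients you use, packaged as Proposition \ref{pr:sasaki_distance_in_lp}:
Theorem \ref{thm:local_quantitative_rigidity}, the comparison of the two rotations via Poincaré and Lemma \ref{lm:norm_estimate}, and the transfer Lemma \ref{lm:sasaki_distance_estimate}. It concludes with Proposition \ref{pr:completeness_of_sobolev_space}.

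Two corrections to your write-up:
\begin{enumerate}
\item The order of choices must be $\varepsilon$ (hence the charts and $r_0,\rho_0$), then the cube size $s\ll\rho_0$, then $k$. You cannot choose $s$ before $\varepsilon$: the chart radii shrink with $\varepsilon$ and may drop below the scale on which $u$ concentrates on a cube, and then the good-cube criterion fails.
\item In the transfer estimate the factor should be $1+|du_l|_{g,h}$, not $1+|du|$. Alternatively, as in the paper, use $1+|L_l|$ with the bounded comparison map $L_l=d(\varphi^{-1})\circ R_l$. Either way the term is harmless: $E_s(u_l)\to0$ makes $|du_l|^p$ equi-integrable, while $d_h(u_k,u_l)$ is bounded on $F_k\cap F_l$ and tends to $0$ in measure.
\end{enumerate}

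Your proof genuinely differs from the paper's in the identification $S=S_u$.

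\textbf{The paper's route.} It stays intrinsic. It proves $\int_Q d_\sigma(du_k\circ S_{u_k},du\circ S)\,dx\to0$ from $d_\sigma(L_1\circ S,L_2\circ S)\le d_\sigma(L_1,L_2)(1+|S|_g)$ and Hölder. It then passes to the limit in the integration-by-parts identity for $(\nu_k,\eta\circ u_k)_h\zeta$, with $\eta\in C_c^\infty(N;TN)$ and $\zeta\in C_c^\infty(Q)$.

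\textbf{Your route.} You work in an extended chart:
\begin{itemize}
\item By Lemma \ref{lm:euclidean_normal_bound}, $D\tilde n_k$ is bounded in $L^p$, so $D\tilde n_k\rightharpoonup D\tilde n$.
\item The identity behind Lemma \ref{lm:game_of_connectors} shows that on $F_k$, $D\tilde n_k$ equals $D\tilde u_k\circ S_{u_k}$ minus a Christoffel term that converges strongly.
\item $D\tilde u_k\circ S_{u_k}\to D\tilde u\circ S$ in $L^1$, by $E_b^S(u_k)\to0$, Hölder with $|S|_g\in L^{p'}$, and $D\tilde u_k\to D\tilde u$ in $L^p$.
\item Comparing with the same identity for $u$ gives $du\circ S_u=du\circ S$.
\end{itemize}

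\textbf{What each buys.} Your route reuses the Section 3 chart machinery. It avoids the product rule for $(\nu_k,\eta\circ u_k)_h$ and limits of pairings at moving base points. The price is localizing in $N$ again, which leaves two checks to write out:
\begin{itemize}
\item $d_{\sigma,p}$-convergence implies $D(\varphi^{(r)}\circ u_k)\to D(\varphi^{(r)}\circ u)$ in $L^p$. This uses $|du|_{g,h}=\sqrt d$ a.e.
\item The $k$-dependent sets $F_k$ can be replaced by $F=u^{-1}(\varphi^{-1}(B(0,r)))$. For example, $\mathbf{1}_{F\setminus F_k}D\tilde n_k\rightharpoonup0$ because $|F\setminus F_k|\to0$. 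Covering $M$ by such sets then finishes the argument.
\end{itemize}
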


We prove Theorem \ref{thm:asymptotic_rigidity} in Section \ref{sec:proof_of_asymptotic_rigidity}. In this section, we collect results that we will need in the proof. As before, we fix an open and bounded cube $Q \subset \mathbb{R}^d$ endowed with a metric $g$ such that
\[
\frac{1}{\lambda} \e_d \le g \le \lambda \e_d \quad \text{in } Q
\]
for some $\lambda > 0$. Since $g$ is comparable to the Euclidean metric, we will often bound quantities measured with respect to $g$ by those measured with respect to the Euclidean metric and vice versa. The result is summarized in the next lemma, which we will use implicitly most of the time.

\begin{lemma} \label{pr:change_in_volume}
    If $f : Q \rightarrow [0, \infty]$ is measurable, then
    \[
    \frac{1}{\lambda^{d/2}} \int_Q f \, dx \le \int_Q f \, d\vol_g \le \lambda^{d/2} \int_Q f \, dx.
    \]
    If $V$ is a vector space endowed with a constant metric $h_0$ and $L \in \mathcal{L}(\mathbb{R}^d, V)$, then
    \[
    \frac{1}{\sqrt{\lambda}} |L|_{g_x, h_0} \le |L|_{\e_d, h_0} \le \sqrt{\lambda} |L|_{g_x, h_0} \quad \text{for all } x \in Q.
    \]
\end{lemma}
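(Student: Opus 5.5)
The lemma is elementary, so I plan to verify both inequalities pointwise using the uniform comparability $\lambda^{-1}\e_d \le g \le \lambda\e_d$ in $Q$.

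\textbf{Volume comparison.} In the coordinates of $Q$ we have $d\vol_g = \sqrt{\det G(x)}\,dx$, where $G(x)$ is the matrix of $g_x$ in the standard basis. The hypothesis says every eigenvalue $\mu$ of $G(x)$ satisfies $\lambda^{-1}\le \mu\le\lambda$. Since $\det G(x)$ is the product of $d$ such eigenvalues, it lies in $[\lambda^{-d},\lambda^{d}]$, so $\sqrt{\det G(x)}\in[\lambda^{-d/2},\lambda^{d/2}]$. For measurable $f\ge 0$ I will integrate these pointwise bounds on the density, which gives the first pair of inequalities; monotonicity of the integral suffices, including when the integrals are infinite.

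\textbf{Norm comparison.} Fix $x$ and write $g_x(v,w)=(Gv,w)$. For $L\in\mathcal L(\mathbb{R}^d,V)$, I will express both Frobenius norms through traces. Take a $g_x$-orthonormal basis $(G^{-1/2}e_i)$ and compute
\[
|L|_{g_x,h_0}^2=\sum_i |LG^{-1/2}e_i|_{h_0}^2=\operatorname{tr}(G^{-1/2}L^*L\,G^{-1/2}),
\]
and likewise $|L|_{\e_d,h_0}^2=\operatorname{tr}(L^*L)$. Here $L^*$ is the adjoint with respect to $\e_d$ and $h_0$, and $A:=L^*L$ is positive semidefinite. Next I will diagonalize $G=\sum_j\mu_j f_jf_j^T$ in an orthonormal eigenbasis $(f_j)$. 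This gives
\[
\operatorname{tr}(G^{-1/2}AG^{-1/2})=\sum_j \mu_j^{-1}(Af_j,f_j).
\]
Each term satisfies $(Af_j,f_j)\ge0$ and $\mu_j^{-1}\in[\lambda^{-1},\lambda]$. Comparing term by term with $\operatorname{tr}A=\sum_j(Af_j,f_j)$ yields
\[
\lambda^{-1}|L|_{\e_d,h_0}^2\le |L|_{g_x,h_0}^2\le\lambda|L|_{\e_d,h_0}^2 .
\]
Taking square roots and rearranging gives the stated inequalities.

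No step is a real obstacle. The only point needing care is the bookkeeping of which metric enters inversely, and the trace computation in the eigenbasis handles this cleanly.
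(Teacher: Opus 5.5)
Your proof is correct. The volume bound via $\sqrt{\det G(x)}\in[\lambda^{-d/2},\lambda^{d/2}]$ is exactly the elementary argument the paper has in mind. For the norm comparison the paper only cites Lemma 3.4 of the companion paper \cite{B}, and your identity $|L|_{g_x,h_0}^2=\operatorname{tr}(L^*L\,G(x)^{-1})=\sum_j\mu_j^{-1}(L^*Lf_j,f_j)$ gives a complete, self-contained replacement for that citation, with the inequalities in the right direction.
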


\begin{proof}
    The proof of the first claim is elementary. The second claim follows from Lemma 3.4 in \cite{B}.
\end{proof}

Our main goal in this section is to prove that two Sobolev immersions are close in the $\dot{W}^{1, p}$-sense on small scales if their $L^p$-distance and stretching energies are small. We begin with a lemma that bounds the norm of a linear map by its ``$L^p$-variation".

\begin{lemma} \label{lm:norm_estimate}
    Let $R \in \mathcal{L}(\mathbb{R}^d; \mathbb{R}^{d + 1})$. Then there exists a constant $C$ depending only on $p$ and $d$ such that
    \[
    |R| \le C |Q|^{-\frac{1}{d} - \frac{2}{p}} \left(\int_Q \int_Q |R(x - y)|^p \, dx \, dy\right)^\frac{1}{p}.
    \]
\end{lemma}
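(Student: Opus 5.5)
The inequality is scale-invariant and the integral depends only on differences $x-y$, so the plan is to reduce to the unit cube and then use the equivalence of norms on the finite-dimensional space $\mathcal{L}(\mathbb{R}^d;\mathbb{R}^{d+1})$.

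\emph{Normalization.} Write $Q = a + \ell Q_0$, where $Q_0 := (0,1)^d$, $a \in \mathbb{R}^d$ and $\ell > 0$, so that $|Q| = \ell^d$. The substitution $x = a + \ell x'$, $y = a + \ell y'$ cancels the translation in $x - y$. By linearity of $R$ it gives
\[
\int_Q \int_Q |R(x-y)|^p \, dx \, dy = \ell^{2d+p} \int_{Q_0}\int_{Q_0} |R(x'-y')|^p \, dx' \, dy'.
\]
Moreover,
\[
|Q|^{-\frac{1}{d}-\frac{2}{p}} = \ell^{-1-\frac{2d}{p}} = \bigl(\ell^{2d+p}\bigr)^{-\frac{1}{p}}.
\]
Hence the claimed inequality on $Q$ is equivalent to the same inequality on $Q_0$, namely
\[
|R| \le C\, \mathcal{N}(R), \qquad \mathcal{N}(R) := \Bigl(\int_{Q_0}\int_{Q_0} |R(x-y)|^p \, dx \, dy\Bigr)^{\frac{1}{p}}.
\]

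\emph{$\mathcal{N}$ is a norm.} Absolute homogeneity is immediate. The triangle inequality follows from Minkowski's inequality in $L^p(Q_0 \times Q_0)$, since $R \mapsto R(x-y)$ is linear. For definiteness, suppose $\mathcal{N}(R) = 0$. Then $R(x-y) = 0$ for a.e. $(x,y) \in Q_0 \times Q_0$. The map $(x,y) \mapsto R(x-y)$ is continuous, so it vanishes on all of $Q_0 \times Q_0$. The differences $x-y$ fill the open set $(-1,1)^d$, which spans $\mathbb{R}^d$, and therefore $R = 0$.

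\emph{Conclusion.} All norms on the finite-dimensional space $\mathcal{L}(\mathbb{R}^d;\mathbb{R}^{d+1})$ are equivalent. Concretely, $\mathcal{N}$ is continuous and positive on the compact Frobenius unit sphere, so it has a positive minimum $c$ there, and by homogeneity $|R| \le c^{-1}\mathcal{N}(R)$ for all $R$. The constant $C = c^{-1}$ depends only on $d$ and $p$. The only point needing care is the definiteness step, and it is handled by the continuity argument above, so no step is a genuine obstacle.
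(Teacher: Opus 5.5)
Your proof is correct and follows the same route as the paper. Both arguments reduce to a unit cube by scaling, then use that $R \mapsto \bigl(\int_{Q_0}\int_{Q_0}|R(x-y)|^p\,dx\,dy\bigr)^{1/p}$ is continuous, positively homogeneous, and vanishes only at $R=0$, so it attains a positive minimum on the Frobenius unit sphere; you also check the scaling exponent and the definiteness step explicitly, which the paper only asserts.
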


\begin{proof}
    Without loss of generality, we assume that $Q$ is the unit cube centered at $0$. For more general cubes, the inequality follows by a scaling argument. Define $\Psi : \mathcal{L}(\mathbb{R}^d; \mathbb{R}^{d + 1}) \rightarrow \mathbb{R}$ by
    \[
    \Psi(R) = \left(\int_Q \int_Q |R(x - y)|^p \, dx \, dy\right)^\frac{1}{p}.
    \]
    Clearly, $\Psi$ is a continuous function on $\mathcal{L}(\mathbb{R}^d; \mathbb{R}^{d + 1})$. Set $m := \min \{\Psi(R) : |R| = 1\}$. Since $\Psi(R) = 0$ if and only if $R = 0$, we have $m > 0$. Noting that both $|R|$ and $\Psi(R)$ are positively homogeneous of degree 1, we have
    \[
    |R| \le \frac{1}{m} \left(\int_Q \int_Q |R(x - y)|^p \, dx \, dy\right)^\frac{1}{p} \text{for all } R \in \mathcal{L}(\mathbb{R}^d; \mathbb{R}^{d + 1}).
    \]
\end{proof}

Let $(N, h)$ be a connected, oriented Riemannian manifold. We denote the Sasaki metric on $(T^*Q \otimes TN, \pi, Q \times N)$ by $\sigma$. In the next lemma, we prove an estimate for the Sasaki distance of linear maps in terms of their distance in coordinates.

\begin{lemma} \label{lm:sasaki_distance_estimate}
    Let $(U, \varphi)$ be an $\varepsilon$-isometric chart on $N$. Assume that $B(0, r) \subset \varphi(U)$. Let $x \in Q$ and let $q_i \in \varphi^{-1}(B(0, r))$, $L_i \in \mathcal{L}(T_x\mathbb{R}^d, T_{q_i}N)$ for $i = 1, 2$. Then
    \[
    d_\sigma(L_1, L_2) \le C(|D\varphi(q_1) \circ L_1 - d\varphi(q_2) \circ L_2|_{g, \e_{d + 1}} + d_h(q_1, q_2)(1 + |L_2|_{g, h})),
    \]
    where $C$ depends only on $d$.
\end{lemma}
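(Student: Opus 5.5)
We bound $d_\sigma(L_1, L_2)$ from above by choosing a single explicit curve in Corollary \ref{co:maps_sasaki_distance}. Since $B(0, r)$ is convex and contained in $\varphi(U)$, we take the straight segment in coordinates, $\gamma(t) := \varphi^{-1}((1 - t)\varphi(q_1) + t\varphi(q_2))$, which joins $q_1$ to $q_2$ inside $\varphi^{-1}(B(0, r))$. We then control the two terms on the right-hand side of \eqref{eq:maps_sasaki_distance} separately:
\[
d_\sigma(L_1, L_2) \le |L_1 - L_\gamma|_{g, h} + \int_0^1 |\gamma'|_h \, dt .
\]

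\emph{Length term.} By the $\varepsilon$-isometric condition \eqref{eq:epsilon_isometric_chart}, $|\gamma'|_h \le \sqrt{1 + \varepsilon}\,|\varphi(q_1) - \varphi(q_2)|$. This would give the bound $2|\varphi(q_1) - \varphi(q_2)|$. However, the statement is phrased in terms of $d_h(q_1, q_2)$, so we need $|\varphi(q_1) - \varphi(q_2)| \le C d_h(q_1, q_2)$. This holds if the $h$-geodesic between $q_1$ and $q_2$ stays in $U$, but a minimizing curve could a priori leave $U$. The remedy is to note that any curve leaving $\varphi^{-1}(B(0, 2r))$ has $h$-length at least of order $r$, which is comparable to the coordinate distance. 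So in either case the Euclidean distance of the images, hence the length of $\gamma$, is bounded by $C d_h(q_1, q_2)$. If needed, we shrink to $B(0, r)$ inside $B(0, 2r) \subset \varphi(U)$, as in the extension setting.

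\emph{Transport term.} First write
\[
|L_1 - L_\gamma|_{g, h} \le 2\,|D\varphi(q_1)\circ L_1 - D\varphi(q_1)\circ L_\gamma|_{g, \e_{d+1}},
\]
which uses the comparability of $h$ and $\e_{d+1}$ at $q_1$. Then split
\[
|D\varphi(q_1) L_1 - D\varphi(q_2) L_2| + |D\varphi(q_2) L_2 - D\varphi(q_1) L_\gamma| .
\]
The first piece is already one of the terms in the claimed bound. For the second, apply the transport column by column: for a vector $w = L_2 v$, let $W(t)$ be the coordinate expression of the parallel transport of $w$ back along $\gamma$. The parallel transport ODE in coordinates reads
\[
W_k' = -\sum_{i,j} \Gamma^k_{ij}\, \dot\gamma_i\, W_j,
\]
with $|\Gamma| \le \varepsilon$. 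Grönwall's inequality gives $|W(t)| \le C|W(1)|$, and therefore
\[
|W(0) - W(1)| \le C\varepsilon\, |\dot\gamma|\, |W(1)| \le C\, d_h(q_1, q_2)\, |L_2 v|_h .
\]
Here $\varepsilon < 1$ and $|\dot\gamma| \le C d_h(q_1, q_2)$ by the length step. Summing over an orthonormal basis (with respect to $g_x$) yields
\[
|D\varphi(q_2) L_2 - D\varphi(q_1) L_\gamma|_{g, \e_{d+1}} \le C\, d_h(q_1, q_2)\, |L_2|_{g, h}.
\]

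Combining the two steps gives the lemma, with constants depending only on $d$. The main obstacle is the comparison $|\varphi(q_1) - \varphi(q_2)| \le C d_h(q_1, q_2)$ when a minimizing geodesic may exit the chart. I would first try the argument above, using that exiting costs length at least $r$. If that fails to give a $d$-only constant, I would restate the estimate with $|\varphi(q_1) - \varphi(q_2)|$ in place of $d_h$, which suffices for later use.
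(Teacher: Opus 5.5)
Your argument is the paper's: the same straight segment $\gamma(t)=\varphi^{-1}((1-t)\varphi(q_1)+t\varphi(q_2))$, the same length bound, and the same coordinate ODE for parallel transport with $|\Gamma^k_{ij}|\le\varepsilon$. The step you single out is exactly where the paper's proof is thin. It ends by invoking $|\varphi(q_1)-\varphi(q_2)|\le 2d_h(q_1,q_2)$ with no justification, and your suspicion is well founded.

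Under the stated hypothesis $B(0,r)\subset\varphi(U)$ alone, this inequality, and in fact the lemma itself, can fail. Take $d=1$ and let $N$ be a flat cone of angle $\beta\in(0,\pi)$ with the apex removed. Let $\varphi^{-1}\colon B(0,r)\to N$ be the local isometry from a planar disk obtained by cutting the cone open, with $r$ as large as injectivity allows, so that two points of $\partial B(0,r)$ are identified in $N$. This chart is $\varepsilon$-isometric for every $\varepsilon$. For $q_1,q_2\in\varphi^{-1}(B(0,r))$ approaching these two boundary points, $d_h(q_1,q_2)\to 0$ while $|\varphi(q_1)-\varphi(q_2)|$ stays bounded below. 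Choose $L_1,L_2$ with $D\varphi(q_1)\circ L_1=D\varphi(q_2)\circ L_2$ of unit norm; then the right-hand side tends to $0$. On the other hand, every short curve from $q_1$ to $q_2$ crosses the seam, so its parallel transport differs from the coordinate identification by a rotation of angle $\beta$. Hence $d_\sigma(L_1,L_2)$ stays bounded below.

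So the margin is genuinely needed, and your exit-cost argument with $B(0,2r)\subset\varphi(U)$ is the correct repair. This hypothesis is available wherever the lemma is used, since Proposition \ref{pr:sasaki_distance_in_lp} works with an extension $\varphi^{(r)}$, whose definition requires it. Make the argument precise and commit to it. A curve from $q_1$ to $q_2$ that leaves $\varphi^{-1}(B(0,2r))$ must, while still in the chart, go from $|y|<r$ out to $|y|\to 2r$ and back. Its $h$-length is therefore at least $2r/\sqrt{1+\varepsilon}>|\varphi(q_1)-\varphi(q_2)|/\sqrt{1+\varepsilon}$. A curve staying inside has Euclidean coordinate length at most $\sqrt{1+\varepsilon}$ times its $h$-length. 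Hence $|\varphi(q_1)-\varphi(q_2)|\le\sqrt{1+\varepsilon}\,d_h(q_1,q_2)$.

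Your fallback would also work downstream. In that proposition, $|\tilde u_1-\tilde u_2|$ is controlled by $C\,d_h(u_1,u_2)$ through the global Lipschitz constant of $\varphi^{(r)}$, which depends only on $\theta$ since $\varepsilon<1$.

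One smaller correction in the transport step. Gr\"onwall only yields $|W(t)|\le e^{C\varepsilon|\varphi(q_1)-\varphi(q_2)|}\,|W(1)|$. Since $r$ is not bounded in the hypotheses, this constant does not depend on $d$ alone. Use instead, as the paper does, that parallel transport is an isometry for $h$: $|W(t)|\le\sqrt{1+\varepsilon}\,|W(t)|_h=\sqrt{1+\varepsilon}\,|L_2v|_h$. This gives $|W(0)-W(1)|\le C\varepsilon\,|\varphi(q_1)-\varphi(q_2)|\,|L_2v|_h$ with $C=C(d)$.
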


\begin{proof}
    Define $y(t) := \varphi(q_1) + t(\varphi(q_2) - \varphi(q_1))$ for $t \in [0, 1]$ and set $\gamma(t) := \varphi^{-1}(y(t))$. By Proposition \ref{pr:sasaki_distance} we know that
    \begin{equation} \label{eq:sasaki_distance_estimate}
        d_\sigma(L_1, L_2)^2 \le |L_1 - L_\gamma|_{g, h}^2 + \left(\int_0^1 |\gamma'|_h \, dt\right)^2,
    \end{equation}
    where $L_\gamma \in \mathcal{L}(T_x\mathbb{R}^d, T_{q_1}N)$ is defined as in \eqref{eq:linear_map_transport}. We can bound the second term on the right-hand side simply by
    \begin{equation} \label{eq:arclength_estimate}
        \int_0^1|\gamma'(t)|_h \, dt = \int_0^1 |d(\varphi^{-1})_{y(t)}(\varphi(q_2) - \varphi(q_1))|_h \, dt \le 2 |\varphi(q_2) - \varphi(q_1)|.
    \end{equation}
    Let $v \in T_x\mathbb{R}^d$. We define $\alpha(t) \in T_{\gamma(t)}N$ to be the vector parallel to $L_2v$ along $\gamma$. We shall estimate $|(L_1 - L_\gamma)v|_h = |L_1v- \alpha(0)|_h$. If we represent $\alpha$ in coordinates by
    \[
    \alpha(t) = \sum_{k = 1}^d a_k(t) \frac{\partial}{\partial y_k}\bigg|_{\gamma(t)},
    \]
    then $a_1, \dots, a_d$ satisfy the following system of ordinary differential equations:
    \begin{equation} \label{eq:parallel_transport_ode}
        a_k'(t) = - \sum_{i, j = 1}^d a_i(t) y_j'(t) \Gamma_{i, j}^k(\gamma(t)), \quad k = 1, \dots, d,
    \end{equation}
    where $\Gamma_{i,j }^k : U \rightarrow \mathbb{R}$ are the Christoffel symbols. The triangle inequality gives
    \begin{equation} \label{eq:comparison_with_parallel_transport}
        |L_1v - \alpha(0)|_h \le 2|D\varphi(q_1)(L_1v - \alpha(0))| \le 2|D\varphi(q_1)(L_1v) - D\varphi(q_2)(L_2v)| + 2|a(1) - a(0)|.
    \end{equation}
    By \eqref{eq:epsilon_isometric_chart} and \eqref{eq:parallel_transport_ode}, we have
    \begin{multline*}
        |a(1) - a(0)| \le \int_0^1 |a'(t)| \, dt \le C \int_0^1 \max_{i, j, k} \|\Gamma_{i, j}^k\|_{L^\infty(U)} |a(t)| |y'(t)| \, dt \\
        \le C \int_0^1 |a(t)| |y'(t)| \, dt \le C |\varphi(q_2) - \varphi(q_1)| \int_0^1 |\alpha(t)|_h \, dt.
    \end{multline*}
    Since parallel transport preserves length, we know that $|\alpha(t)|_h = |L_2v|_h$. Hence,
    \[
    |a(1) - a(0)| \le C|\varphi(q_2) - \varphi(q_1)||L_2v|_h.
    \]
    Thus, by \eqref{eq:comparison_with_parallel_transport}, we conclude that
    \[
    |(L_1 - L_\gamma)v|_h \le C(|D\varphi(q_1)(L_1v) - D\varphi(q_2)(L_2v)| + |\varphi(q_1) - \varphi(q_2)||L_2v|_h).
    \]
    Consequently, we obtain
    \[
    |L_1 - L_\gamma|_{g, h} \le C(|D\varphi(q_1) \circ L_1 - D\varphi(q_2) \circ L_2|_{g, \e_{d + 1}} + |\varphi(q_1) - \varphi(q_2)||L_2|_{g, h})
    \]
    Finally, \eqref{eq:sasaki_distance_estimate}, \eqref{eq:arclength_estimate} and the inequality $|\varphi(q_1) - \varphi(q_2)| \le 2d_h(q_1, q_2)$ yield the claim.
\end{proof}

We come to the main result of this section.

\begin{proposition} \label{pr:sasaki_distance_in_lp}
    Let $(U, \varphi)$ be an $\varepsilon$-isometric chart on $N$ and let $\varphi^{(r)}$ be its extension by $\theta$. Let $u_1, u_2 \in \Imm_p(Q; N)$. Let $\delta \in (0, 1)$ and assume that $F_k \subset Q$ is a Lebesgue measurable set such that
    \[
    u_k(F_k) \subset \varphi^{-1}(B(0, r)), \quad \frac{|Q \setminus F_k|}{|Q|} \le \delta^p \quad \text{for } k = 1, 2.
    \]
    Then
    \begin{multline} \label{eq:reverse_poincaré}
        \int_Q d_\sigma^p(u_1, u_2) \, dx \le C|Q|\left(\delta^p + \varepsilon^p + \left(\osc_Q g\right)^p + \frac{\diam^p(Q)}{1 - \delta^p} \left(1 + \frac{\delta^p}{r^p}\right)\right)\\
        + C\left(\left(1 + \frac{1}{r^p}\right) \max_{k =1, 2} E_s(u_k) + \frac{\diam^p(Q)}{1 - \delta^p} \max_{k =1, 2} E_b(u_k) + \left(1 + |Q|^{-\frac{p}{d}}\right)\int_Q d_h^p(u_1, u_2) \, dx\right).
    \end{multline}
    The constant $C$ depends only on $d$, $p$, $\lambda$ and $\theta$.
\end{proposition}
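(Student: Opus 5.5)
The plan is to combine the local quantitative rigidity estimate (Theorem \ref{thm:local_quantitative_rigidity}) for each $u_k$ with the Sasaki distance estimate (Lemma \ref{lm:sasaki_distance_estimate}) and the norm estimate (Lemma \ref{lm:norm_estimate}). Here $d_\sigma(u_1,u_2)$ is read as $d_\sigma(d(u_1)_x,d(u_2)_x)$. Set $\tilde u_k:=\varphi^{(r)}\circ u_k$ and $F:=F_1\cap F_2$, so that $|Q\setminus F|\le 2\delta^p|Q|$.

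\textbf{Rigid motions and the good set.} First apply Theorem \ref{thm:local_quantitative_rigidity} to $u_1$ and $u_2$ separately. This gives $R_1,R_2\in\Ort(g_{x_0^k},\e_{d+1})$ such that $\int_Q|D\tilde u_k-R_k|^p$ is bounded by the right-hand side there. The quantity $\mathcal E(u_k)$ is controlled by
\[
\frac{1}{r^p}\int_{Q\setminus F_k}|du_k|^p+\int_Q|du_k|^p+E_b(u_k)\le C\Bigl(\bigl(1+\tfrac1{r^p}\bigr)E_s(u_k)+|Q|\bigl(1+\tfrac{\delta^p}{r^p}\bigr)+E_b(u_k)\Bigr),
\]
using $|du|\le C(1+\dist(du,\Ort))$ and $|Q\setminus F_k|\le\delta^p|Q|$. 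This produces exactly the bracket $\frac{\diam^p(Q)}{1-\delta^p}(1+\delta^p/r^p)$ in \eqref{eq:reverse_poincaré}.

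On $F$ both $u_k(x)$ lie in $\varphi^{-1}(B(0,r))$, and there $D\tilde u_k=D\varphi(u_k)\circ du_k$. Lemma \ref{lm:sasaki_distance_estimate} then gives
\[
d_\sigma(du_1,du_2)\le C\bigl(|D\tilde u_1-D\tilde u_2|+d_h(u_1,u_2)(1+|du_2|)\bigr).
\]
The term $d_h(u_1,u_2)|du_2|$ is a product, so it cannot be integrated naively. I would handle it by bounding $|du_2|\le C(1+\dist(du_2,\Ort))$ and $d_h\le C$, where the constant comes from both points lying in a ball of radius $r$ in an $\varepsilon$-isometric chart, after assuming w.l.o.g.\ that $r$ is bounded. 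The product is then at most $C\,d_h(u_1,u_2)+C\dist(du_2,\Ort)$, contributing $\int_Q d_h^p+E_s$.

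Next, $|D\tilde u_1-D\tilde u_2|\le|D\tilde u_1-R_1|+|R_1-R_2|+|R_2-D\tilde u_2|$. The outer terms are controlled by rigidity. On the bad set $Q\setminus F$, apply Proposition \ref{pr:triangle_inequality}:
\[
d_\sigma\le|du_1|+|du_2|+d_h(u_1,u_2),
\]
and bound $|du_k|^p$ by $C(1+\dist^p)$. Integrated over $Q\setminus F$, this gives $C(\delta^p|Q|+E_s+\int d_h^p)$.

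\textbf{Main obstacle: estimating $|R_1-R_2|$.} This is where the $|Q|^{-p/d}\int d_h^p$ term enters. Let $a_k:=\frac{1}{|Q|}\int_Q(\tilde u_k-R_kx)\,dx$. Poincaré applied to $\tilde u_k-R_kx$ gives
\[
\int_Q|\tilde u_k-R_kx-a_k|^p\le C\diam^p(Q)\int_Q|D\tilde u_k-R_k|^p.
\]
Hence $w:=(R_1-R_2)x+(a_1-a_2)$ satisfies
\[
\int_Q|w|^p\le C\Bigl(\diam^p(Q)\sum_k\int_Q|D\tilde u_k-R_k|^p+\int_Q|\tilde u_1-\tilde u_2|^p\Bigr).
\]
Since $\varphi^{(r)}$ is globally Lipschitz with constant depending on $\theta$, we have $|\tilde u_1-\tilde u_2|\le C\,d_h(u_1,u_2)$ everywhere, using $\varepsilon$-isometry and a comparison of chord and geodesic distance. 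Then $w(x)-w(y)=(R_1-R_2)(x-y)$, so Lemma \ref{lm:norm_estimate} together with $\int\!\!\int|w(x)-w(y)|^p\le 2^p|Q|\int|w|^p$ gives
\[
|Q|\,|R_1-R_2|^p\le C|Q|^{-p/d}\int_Q|w|^p.
\]
Because $\diam^p(Q)\,|Q|^{-p/d}\le C$, the rigidity contributions reappear with bounded constants, and the second piece yields $C|Q|^{-p/d}\int d_h^p$.

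Summing all contributions and absorbing the metric comparison via Lemma \ref{pr:change_in_volume} gives \eqref{eq:reverse_poincaré}. The delicate points I would double-check are the global Lipschitz bound for $\varphi^{(r)}$ with respect to $d_h$, and the boundedness of $d_h$ on $F$ used in the product term; if the latter fails, I would instead use Young's inequality to split that term.
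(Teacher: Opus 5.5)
\textbf{Gap: the product term on the good set.} Most of your outline follows the paper's proof:
\begin{itemize}
\item the bound on $\mathcal{E}(u_k)$;
\item the crude estimate on $Q\setminus(F_1\cap F_2)$ via Proposition \ref{pr:triangle_inequality};
\item the estimate of $|R_1-R_2|$ via Poincar\'e and Lemma \ref{lm:norm_estimate} (using the means $a_k$ instead of the double-integral form is immaterial);
\item the bound $|\tilde u_1-\tilde u_2|\le C\,d_h(u_1,u_2)$, which holds with $C$ depending only on $\theta$ because $|d\varphi^{(r)}|$ is bounded by a multiple of $\|D\theta\|_{L^\infty}$ uniformly in $r$.
\end{itemize}
The step that fails in the stated generality is the estimate on $F=F_1\cap F_2$. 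Applying Lemma \ref{lm:sasaki_distance_estimate} directly to $d(u_1)_x,d(u_2)_x$ creates the product $d_h(u_1,u_2)\,|du_2|_{g,h}$. On $F$ the only available bound is $d_h(u_1,u_2)\le\sqrt{1+\varepsilon}\,|\varphi(u_1)-\varphi(u_2)|<2\sqrt2\,r$, so the product contributes $C r^p E_s(u_2)$. That term does not appear on the right-hand side of the proposition. ``W.l.o.g.\ $r$ bounded'' is not available:
\begin{itemize}
\item $r$ is only constrained by $B(0,2r)\subset\varphi(U)$; for $N=\mathbb{R}^{d+1}$, $\varphi=\Id$, any $r$ is allowed;
\item the hypothesis $u_k(F_k)\subset\varphi^{-1}(B(0,r))$ is tied to $r$;
\item $C$ must not depend on $r$.
\end{itemize}
Your fallback does not help either, since no form of Young's inequality bounds $(ab)^p$ by $C(a^p+b^p)$ without an a priori bound on one factor. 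Your argument does suffice for $r\le 1$, which is all that Section \ref{sec:proof_of_asymptotic_rigidity} uses.

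\textbf{The paper's fix.} Do not apply Lemma \ref{lm:sasaki_distance_estimate} to $du_k$. Apply it to the comparison maps
\[
L_k(x):=d(\varphi^{-1})_{\tilde u_k(x)}\circ R_k\in T_x^*\mathbb{R}^d\otimes T_{u_k(x)}N,\qquad x\in F_k .
\]
\begin{itemize}
\item Since $L_k(x)$ and $d(u_k)_x$ lie in the same fibre, the first identity in Proposition \ref{pr:triangle_inequality} gives $d_\sigma(d(u_k)_x,L_k(x))=|d(u_k)_x-L_k(x)|_{g,h}\le 2|D\tilde u_k(x)-R_k|_{g,\e_{d+1}}$.
\item Since $R_2$ is an isometry, $|L_2(x)|_{g,h}\le C(d,\lambda)$, so Lemma \ref{lm:sasaki_distance_estimate} yields $d_\sigma(L_1,L_2)\le C\bigl(|R_1-R_2|_{g,\e_{d+1}}+d_h(u_1,u_2)\bigr)$ with no product term.
\item The triangle inequality for $d_\sigma$ then closes the good-set estimate.
\end{itemize}

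\textbf{Alternative fix keeping your route.} Write $s_k$ for the pointwise stretching density $\dist_{g,h}(d(u_k)_x,\Ort((\mathbb{R}^d,g_x),(T_{u_k(x)}N,h_{u_k(x)})))$, so that $|du_k|_{g,h}\le\sqrt d+s_k$, and split $F$ into two sets.
\begin{itemize}
\item On $\{s_2\le 1\}$ the product is at most $(2+\sqrt d)\,d_h(u_1,u_2)$.
\item On $\{s_2>1\}$ discard the lemma and use the crude bound
\[
d_\sigma(du_1,du_2)\le|du_1|+|du_2|+d_h(u_1,u_2)\le 2\sqrt d+s_1+s_2+d_h(u_1,u_2)\le C\bigl(s_1+s_2+d_h(u_1,u_2)\bigr),
\]
where the constant $2\sqrt d$ is absorbed because $s_2>1$.
\end{itemize}
Both contributions integrate to $C\bigl(E_s(u_1)+E_s(u_2)+\int_Q d_h^p(u_1,u_2)\,dx\bigr)$, with $C$ independent of $r$.
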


\begin{proof}
    We briefly summarize the content and the proof method of the lemma. The reader should consider $E_s(u_1)$, $E_s(u_2)$ and $|Q|$ as sufficiently small quantities, so that \eqref{eq:reverse_poincaré} implies that the $\dot{W}^{1,p}$ distance of $u_1$ and $u_2$ is controlled by their $L^p$ distance. We shall prove this by using Theorem \ref{thm:local_quantitative_rigidity} to show that $u_1$ and $u_2$ are close to some rotations $R_1$ and $R_2$ in coordinates. Then knowing the $L^p$ distance of $u_1$ and $u_2$ gives us control on $|R_1 - R_2|$, which controls the distance of $du_1$ and $du_2$ thanks to the rigidity theorem.
    
    Set $\tilde{u}_k := \varphi^{(r)} \circ u_k$. By the quantitative rigidity estimate in Theorem \ref{thm:local_quantitative_rigidity}, there exist $x_k \in Q$ and $R_k \in \Ort(g_x, \e_{d + 1})$ for $k = 1, 2$ such that
    \begin{equation} \label{eq:rigidity_applied_to_sequence}
        \int_Q |D\tilde{u}_k - R_k|_{g, \e_{d + 1}}^p \, dx \le C\left(|Q|\left(\delta^p + \varepsilon^p + \left(\osc_Q g\right)^p\right) + E_s(u_k) + \frac{\diam^p(Q)}{1 - \delta^p} \mathcal{E}(u_k)\right).
    \end{equation}
    For $x \in F_k$, we define $L_k(x) \in  \mathcal{L}(T_x\mathbb{R}^d, T_{u_k(x)}N)$ by $L_k(x) := d(\varphi^{-1})_{\tilde{u}_k(x)} \circ R_k$, where we identify $T_{\tilde{u}_k(x)}\mathbb{R}^{d + 1}$ with $\mathbb{R}^{d + 1}$. Since $D\varphi \circ du_k = D\tilde{u}_k$ a.e. in $F_k$, we have
    \begin{equation} \label{eq:true_distance_estimate}
        |d(u_k)_x - L_k(x)|_{g, h} \le 2 |D\varphi(u_k(x)) (d(u_k)_x - L_k(x))|_{g, \e_{d + 1}} = 2|D\tilde{u}_k(x) - R_k|_{g, \e_{d + 1}}.
    \end{equation}
    for a.e. $x \in F_k$. Furthermore, it follows from Lemma \ref{lm:sasaki_distance_estimate} that
    \begin{multline*}
        d_\sigma(L_1(x), L_2(x)) \le C(|D\varphi(u_1(x)) \circ L_1(x) - D\varphi(u_2(x)) \circ L_2(x)|_{g, \e_{d + 1}} + d_h(u_1(x), u_2(x))(1 + |L_2(x)|_{g, h})) \\
        \le C(|R_1 - R_2|_{g, \e_{d + 1}} + d_h(u_1(x), u_2(x))(1 + 2|R_2|_{g, h})) \le C(|R_1 - R_2|_{g, \e_{d + 1}} + d_h(u_1(x), u_2(x))),
    \end{multline*}
    where the last constant depends on $d$ and $\lambda$. Using the triangle inequality and Proposition \ref{pr:triangle_inequality}, we obtain for a.e. $x \in F_1 \cap F_2$,
    \begin{multline} \label{eq:pointwise_sasaki_distance}
        d_\sigma(d(u_1)_x, d(u_2)_x) \le d_\sigma(d(u_1)_x, L_1(x)) + d_\sigma(L_1(x), L_2(x)) + d_\sigma(L_2(x), d(u_2)_x) \\
        \le |d(u_1)_x - L_1(x)|_{g, h} + |d(u_2)_x - L_2(x)|_{g, h} + C(|R_1 - R_2|_{g, \e_{d + 1}} + d_h(u_1(x), u_2(x))).
    \end{multline}
    Integrating \eqref{eq:pointwise_sasaki_distance} over $F_1 \cap F_2$ and using \eqref{eq:true_distance_estimate} gives
    \begin{equation} \label{eq:sasaki_distance_differentials}
        \int_{F_1 \cap F_2} d_\sigma^p(du_1, du_2) \, dx \le C\int_Q |D\tilde{u}_1 - R_1|_{g, \e_{d + 1}}^p + |D\tilde{u}_2 - R_2|_{g, \e_{d + 1}}^p + |R_1 - R_2|_{g, \e_{d + 1}}^p + d_h^p(u_1, u_2) \, dx.
    \end{equation}
    Next, we estimate $|R_1 - R_2|$. By Poincaré's inequality applied to $\tilde{u}_k - R_k$, we have
    \begin{equation} \label{eq:poincaré_with_rotation}
        \int_Q \int_Q |\tilde{u}_k(x) - \tilde{u}_k(y) - R_k(x - y)|^p \, dx \, dy \le C \diam^p(Q) |Q| \int_Q |D\tilde{u}_k(x) - R_k|^p \, dx \quad \text{for } k = 1, 2.
    \end{equation}
    Since
    \[
    |(R_1 - R_2)(x - y)| \le |\tilde{u}_1(x) - \tilde{u}_1(y) - R_1(x - y)| + |\tilde{u}_2(x) - \tilde{u}_2(y) - R_2(x - y)| + |\tilde{u}_1(x) - \tilde{u}_2(x)| + |\tilde{u}_1(y) - \tilde{u}_2(y)|,
    \]
    the inequality \eqref{eq:poincaré_with_rotation} implies
    \begin{multline*}
        \int_Q \int_Q |(R_1 - R_2)(x - y)|^p \, dx \, dy \\
        \le C |Q|\left(\diam^p(Q) \int_Q |D\tilde{u}_1(x) - R_1|^p + |D\tilde{u}_2(x) - R_2|^p \, dx + \int_Q |\tilde{u}_1(x) - \tilde{u}_2(x)|^p \, dx \right).
    \end{multline*}
    Finally, Lemma \ref{lm:norm_estimate} yields
    \begin{equation} \label{eq:difference_of_rotations}
        \begin{aligned}
            &|R_1 - R_2|^p \le |Q|^{-\frac{p}{d} - 2} \int_Q \int_Q |(R_1 - R_2)(x - y)|^p \, dx \, dy \\
            &\le C |Q|^{-\frac{p}{d} - 1} \left(\diam^p(Q) \int_Q |D\tilde{u}_1(x) - R_1|^p + |D\tilde{u}_2(x) - R_2|^p \, dx + \int_Q |\tilde{u}_1(x) - \tilde{u}_2(x)|^p \, dx \right) \\
            &\le C|Q|^{-1} \left(\int_Q |D\tilde{u}_1(x) - R_1|^p + |D\tilde{u}_2(x) - R_2|^p \, dx + |Q|^{-\frac{p}{d}}\int_Q |\tilde{u}_1(x) - \tilde{u}_2(x)|^p \, dx \right).
        \end{aligned}
    \end{equation}
    We observe that there exists a constant $C$ depending only on $\theta$ such that 
    \[
    \|\tilde{u}_1 - \tilde{u}_2\|_{L^\infty(Q)} \le C\|d_h(u_1, u_2)\|_{L^\infty(Q)} \quad \text{ for all } k, l \in \mathbb{N}.
    \]
    Hence, we obtain
    \begin{multline*}
        \int_Q |R_1 - R_2|_{g, \e_{d + 1}}^p \, dx \le C |Q| |R_1 - R_2|^p \\
        \le C \left(\int_Q |D\tilde{u}_1 - R_1|_{g, \e_{d + 1}}^p + |D\tilde{u}_2 - R_2|_{g, \e_{d + 1}}^p \, dx + |Q|^{-\frac{p}{d}}\int_Q d_h^p(u_1, u_2) \, dx \right)
    \end{multline*}
    The previous estimate, in combination with \eqref{eq:sasaki_distance_differentials}, implies
    \begin{equation}
        \int_{F_1 \cap F_2} d_\sigma^p(du_1, du_2) \, dx \le C\int_Q |D\tilde{u}_1 - R_1|_{g, \e_{d + 1}}^p + |D\tilde{u}_2 - R_2|_{g, \e_{d + 1}}^p + (1 + |Q|^{-\frac{p}{d}})d_h^p(u_1, u_2) \, dx.
    \end{equation}
    Therefore, \eqref{eq:rigidity_applied_to_sequence} gives
    \begin{multline*}
        \int_{F_1 \cap F_2} d_\sigma^p(du_1, du_2) \, dx \le C|Q|\left(\delta^p + \varepsilon^p + \left(\osc_Q g\right)^p + \right) + C \max_{k =1, 2} E_s(u_k)\\
        + C\left(\frac{\diam^p(Q)}{1 - \delta^p} \max_{k =1, 2} \mathcal{E}(u_k) + \left(1 + |Q|^{-\frac{p}{d}}\right)\int_Q d_h^p(u_1, u_2) \, dx\right).
    \end{multline*}
    Since,
    \begin{align*}
        \mathcal{E}(u_k) &\le C\left(\frac{1}{r^p} |Q \setminus F_k| + |Q| + \left(1 + \frac{1}{r^p}\right) E_s(u_k) + E_b(u_k) \right) \\
        &\le C\left(\left(1 + \frac{\delta^p}{r^p}\right)|Q| + \left(1 + \frac{1}{r^p}\right) E_s(u_k) + E_b(u_k) \right) \quad \text{for } k = 1, 2,
    \end{align*}
    we arrive at
    \begin{multline} \label{eq:estimate_on_good_set}
        \int_{F_1 \cap F_2} d_\sigma^p(du_1, du_2) \, dx \le C|Q|\left(\delta^p + \varepsilon^p + \left(\osc_Q g\right)^p + \frac{\diam^p(Q)}{1 - \delta^p} \left(1 + \frac{\delta^p}{r^p}\right)\right)\\
        + C\left(\left(1 + \frac{1}{r^p}\right) \max_{k =1, 2} E_s(u_k) + \frac{\diam^p(Q)}{1 - \delta^p} \max_{k =1, 2} E_b(u_k) + \left(1 + |Q|^{-\frac{p}{d}}\right)\int_Q d_h^p(u_1, u_2) \, dx\right).
    \end{multline}
    On the other hand, we can apply the last point in Proposition \ref{pr:triangle_inequality} outside of $F_1 \cap F_2$ to get
    \begin{multline} \label{eq:crude_estimate_on_bad_set}
        \int_{Q \setminus (F_1 \cap F_2)} d_\sigma^p(du_1, du_2) \, dx \le C \int_{Q \setminus (F_1 \cap F_2)} |du_1|_{g, h}^p + d_h^p(u_1, u_2) + |du_2|_{g, h}^p \, dx \\
        \le C\left(\delta^p|Q| + E_s(u_1) + E_s(u_2) + \int_Q d_h^p(u_1, u_2) \, dx \right).
    \end{multline}
    To conclude the proof, we simply add \eqref{eq:estimate_on_good_set} and \eqref{eq:crude_estimate_on_bad_set}.
\end{proof}

\section{Proof of Theorem \ref{thm:asymptotic_rigidity}} \label{sec:proof_of_asymptotic_rigidity}

Since $M$ is compact, there is no loss of generality in proving the statement in a single chart. Let $(W, \psi)$ be a chart on $M$ such that $Q := \psi(W)$ is an open and bounded cube. We denote the pushforward of the metric $g$ by $\psi$ by the same letter and assume without loss of generality that
\[
\frac{1}{\lambda} \e_d \le g \le \lambda \e_d \quad \text{in } Q
\]
for some $\lambda > 0$ and that $g$ is uniformly continuous in $Q$. We denote the coordinate version of $u_k$ and $\nu_{u_k}$ by the same letter. We briefly outline the steps of the proof. In step 1, we show that the sequences $(u_k)$ and $(\nu_{u_k})$ have converging subsequences in $L^p$. In steps 2 and 3, we partition $Q$ into small cubes and use the convergence of $(u_k)$ and Proposition \ref{pr:sasaki_distance_in_lp} to show that $(du_k)$ has a Cauchy subsequence in $\dot{W}^{1, p}$. In step 4, we prove that the limit function we identified is actually a Sobolev immersion, and finally, in step 5, we prove that if $E_b^S(u_k) \to 0$, then the induced shape operator of the limit agrees with $S$, the reference shape operator.

\underline{Step 1:} To begin with, we shall apply Theorem \ref{thm:rellich_kondrachov} to the sequences $(u_k)$ and $(\nu_{u_k})$ in order to pass to converging subsequences. For simplicity, we write $\nu_k$ for $\nu_{u_k}$. Using the triangle inequality, we get
\begin{equation} \label{eq:uniformly_bounded_derivatives}
    \int_Q |du_k|_{g, h}^p \, dx \le C(|Q| + E_s(u_k)).
\end{equation}
Hence, it is clear from \eqref{eq:vanishing_and_bounded_energies} and \eqref{eq:uniformly_bounded_in_lp} that $(u_k)$ satisfies the hypotheses of Theorem \ref{thm:rellich_kondrachov}. If we denote the zero vector of $T_qN$ by $0_q$, we have
\[
d_\sigma(\nu_k(x), 0_{q_0}) \le d_\sigma(\nu_k(x), 0_{u_k(x)}) + d_\sigma(0_{u_k(x)}, 0_{q_0}) = |\nu_k(x)|_h + d_h(u_k(x), q_0) \quad \text{for all } x \in Q,
\]
so that
\begin{equation} \label{eq:normals_uniformly_bounded_in_lp}
    \int_Q d_\sigma^p(\nu_k, 0_{q_0}) \, dx \le C\left(|Q| + \int_Q d_h^p(u_k, q_0) \, dx\right).
\end{equation}
On the other hand, by the definition of the Sasaki metric, we know that
\[
|d(\nu_k)_x|_{g, \sigma}^2 = |K_{TN} \circ d(\nu_k)_x|_{g, h}^2 + |d(\pi_N)_{u(x)} \circ d(\nu_k)_x|_{g, h}^2. 
\]
Since $d\pi_N \circ d\nu_k = du_k$ a.e. in $Q$, we obtain
\[
\int_Q |d\nu_k|_{g, \sigma}^2 \, dx \le C\left(E_b(u_k) + \int_Q |du_k|_{g, h}^p \, dx\right).
\]
Thus, by \eqref{eq:vanishing_and_bounded_energies}, \eqref{eq:uniformly_bounded_derivatives} and \eqref{eq:normals_uniformly_bounded_in_lp}, we can apply Theorem \ref{thm:rellich_kondrachov} to $(\nu_k)$ as well. As a result, there exist subsequences of $(u_k)$ and $(\nu_k)$, not relabeled, and there exist functions $u \in \dot{W}^{1, p}(Q; N)$, $\nu \in \dot{W}^{1, p}(Q; TN)$ such that 
\begin{equation} \label{eq:lp_and_pointwise_convergence}
    \begin{aligned}
        d_h(u_k, u) \to 0 \text{ in } L^p(Q), &\quad u_k \to u \text{ pointwise a.e. in } Q, \\
        d_\sigma(\nu_k, \nu) \to 0 \text{ in } L^p(Q), &\quad \nu_k \to \nu \text{ pointwise a.e. in } Q.
    \end{aligned}
\end{equation}
Later, we will show that $u \in \Imm_p(Q; N)$ and $\nu(x) = \nu_u(x)$ a.e. in $Q$. We also note a consequence of Lemma \ref{lm:closure_under_pointwise_convergence} that we shall need later: for all open sets $D \subset Q$, we have
\begin{multline} \label{eq:bounded_average_integral_norm}
    \int_D |du|_{g, h}^p \, dx \le C \int_{\psi^{-1}(D)} |du|_{g, h}^p \, d\vol_g\\
    \le C \liminf_{k \to \infty} \int_{\psi^{-1}(D)} |du_k|_{g, h}^p \, d\vol_g \le \liminf_{k \to \infty} C (\vol_g(\psi^{-1}(D)) + E_s(u_k)) \le C|D|.
\end{multline}
\underline{Step 2:} We fix $\varepsilon \in (0, 1)$ and choose $\tau > 0$ such that $|Q \cap u^{-1}(B(q_0, \tau))| \le \varepsilon^p |Q|$. For every $q \in B(q_0, \tau)$, there exists an $\varepsilon$-isometric chart $(U, \varphi)$ such that $\varphi(q) = 0$. For every such chart, we pick $r, \rho \in (0, 1)$ such that $B(q, 3\rho) \subset \varphi^{-1}(B(0, r))$ and $\varphi^{-1}(B(0, 2r)) \subset U$. Since $N$ is complete, the Hopf-Rinow theorem implies that $B(q_0, \tau)$ is relatively compact. Hence, we can find finitely many points $q_1, \dots, q_K \in N$ and associated radii $r_i, \rho_i$ for $i = 1, \dots, K$ such that
\[
B(q_0, \tau) \subset \bigcup_{i = 1}^K B(q_i, \rho_i).
\]
Set $r_0 := \min \{r_1, \dots, r_K\}$ and $\rho_0 := \min \{\rho_1, \dots, \rho_K\}$. Denote the side length of $Q$ by $s$. We partition $Q$ into identical cubes of side length $s/m$. We define $\mathcal{G}_m$ to be the set of cubes $Q'$ in the partition which satisfy
\begin{equation} \label{eq:density_bound}
    \frac{|Q' \cap u^{-1}(B(q_0, \tau))|}{|Q'|} > \frac{1}{2}.
\end{equation}
Then $\mathcal{B}_m$ is defined to be the set of remaining cubes in the partition. Since $|Q \cap u^{-1}(B(q_0, \tau))| \le \varepsilon^p |Q|$, it is easy to see that
\begin{equation} \label{eq:small_measure}
    \left|\bigcup_{Q' \in \mathcal{B}_m} Q'\right| \le 2\varepsilon^p |Q|.
\end{equation}
We claim that for all sufficiently large $m$, we can find $k(m) \in \mathbb{N}$ such that if $Q' \in \mathcal{G}_m$, then there exists $j \in \{1, \dots, K\}$ for which
\begin{equation} \label{eq:claim_about_density_bound}
    \frac{|Q' \setminus u_k^{-1}(B(q_j, 3\rho_j))|}{|Q'|} \le \varepsilon^p \quad \text{for all } k > k(m).
\end{equation}
Fix $Q' \in \mathcal{G}_m$. By Poincaré's inequality for manifold valued maps given in Lemma \ref{lm:poincare_manifold}, we have
\[
\int_{Q'} \int_{Q'} d_h^p(u(x), u(z)) \, dx \, dz \le C \diam^p(Q') |Q'| \int_{Q'} |du|_{\e_d, h}^p \, dx.
\]
By Chebyshev's inequality and \eqref{eq:density_bound}, there exists a point $x_0 \in Q' \cap u^{-1}(B(q_0, \tau))$ such that
\[
\int_{Q'} d_h^p(u(x), u(x_0)) \, dx \le 2C \diam^p(Q') \int_{Q'} |du|_{\e_d, h}^p \, dx.
\]
Applying Chebyshev's inequality again, we find that
\begin{equation} \label{eq:density_estimate}
    \frac{|Q' \setminus u^{-1}(B(u(x_0), \rho_0))|}{|Q'|} \le C \left(\frac{\diam(Q')}{\rho_0}\right)^p \frac{1}{|Q'|} \int_{Q'} |du|_{\e_d, h}^p \, dx
\end{equation}
It follows from \eqref{eq:bounded_average_integral_norm} that
\[
\int_{Q'} |du|_{\e_d, h}^p \, dx \le C|Q'|,
\]
where $C$ depends only on $d$ and $\lambda$. Thus, \eqref{eq:density_estimate} becomes
\begin{equation} \label{eq:density_estimate_2}
    \frac{|Q' \setminus u^{-1}(B(u(x_0), \rho_0))|}{|Q'|} \le C \left(\frac{\diam(Q')}{\rho_0}\right)^p \le \frac{C}{(m \rho_0)^p}.
\end{equation}
Since $u(x_0) \in B(q_0, \tau)$, there exists $j \in \{1, \dots, K\}$ such that $u(x_0) \in B(q_j, \rho_j)$. For $k \in \mathbb{N}$, it is easy to see that
\begin{align*}
|Q' \setminus u_k^{-1}(B(q_j, 3\rho_j))| &\le |Q' \setminus u_k^{-1}(B(u(x_0), 2q_0))| \\
&\le |Q' \setminus u^{-1}(B(u(x_0), \rho_0))| + |\{x \in Q': d_h(u_k(x), u(x)) \ge \rho_0\}|.
\end{align*}
Therefore, with the help of \eqref{eq:density_estimate_2} and Chebyshev's inequality, we reach
\[
\frac{|Q' \setminus u_k^{-1}(B(q_j, 3\rho_j))|}{|Q'|} \le \frac{C}{\rho_0^p}\left(\frac{1}{m^p} + m^d \int_{Q'} d_h^p(u_k, u) \, dx\right).
\]
Our claim now follows easily from \eqref{eq:lp_and_pointwise_convergence}. 

\underline{Step 3:} We fix a large $m$ and pick $k(m)$ such that for all $Q' \in \mathcal{G}_m$ there exists $j \in \{1, \dots, K\}$ satisfying \eqref{eq:claim_about_density_bound}. Then for $k_1, k_2 > k(m)$, we can apply Proposition \ref{pr:sasaki_distance_in_lp} to $u_{k_1}$ and $u_{k_2}$ in each $Q' \in \mathcal{G}_m$ and sum over all $Q'$ to obtain
\begin{multline*}
    \sum_{Q' \in \mathcal{G}_m} \int_{Q'} d_\sigma^p(du_{k_1}, du_{k_2}) \, dx \le C|Q|\left(\varepsilon^p + \max_{Q' \in \mathcal{G}_m}\left(\osc_{Q'} g\right)^p + \frac{1}{m^p(1 - \varepsilon^p)} \left(1 + \frac{\varepsilon^p}{r_0^p}\right)\right) \\
    + C\left(\left(1 + \frac{1}{r_0^p}\right) \sup_{k > k(m)} E_s(u_k) + \frac{1}{m^p(1 - \varepsilon^p)} \sup_{k > k(m)} E_b(u_k) + \left(1 + m^p\right)\int_Q d_h^p(u_{k_1}, u_{k_2}) \, dx\right)
\end{multline*}
for $k_1, k_2 > k(m)$. If we let $k_1, k_2 \to \infty$ and use \eqref{eq:vanishing_and_bounded_energies} and \eqref{eq:lp_and_pointwise_convergence}, then we see that
\begin{multline*}
    \limsup_{k_1, k_2 \to \infty} \sum_{Q' \in \mathcal{G}_m} \int_{Q'} d_\sigma^p(du_{k_1}, du_{k_2}) \, dx \\
    \le C\left(|Q|\left(\varepsilon^p + \max_{Q' \in \mathcal{G}_m} \left(\osc_{Q'} g\right)^p + \frac{1}{m^p(1 - \varepsilon^p)} \left(1 + \frac{\varepsilon^p}{r_0^p}\right)\right) + \frac{1}{m^p(1 - \varepsilon^p)} \limsup_{k \to \infty} E_b(u_k)\right).
\end{multline*}
On the other hand, if $Q' \in \mathcal{B}_m$, then we can apply the last point in Proposition \ref{pr:triangle_inequality} to get
\begin{multline*}
    \sum_{Q' \in \mathcal{B}_m} \int_{Q'} d_\sigma^p(du_{k_1}, du_{k_2}) \, dx \le \sum_{Q' \in \mathcal{B}_m}\int_{Q'} |du_{k_1}|_{g, h}^p + d_h^p(u_{k_1}, u_{k_2}) + |du_{k_2}|_{g, h}^p \, dx \\
    \le C\left|\bigcup_{Q' \in \mathcal{B}_m}Q'\right| +C \left(E_s(u_{k_1}) + E_s(u_{k_2}) + \int_Q d_h^p(u_{k_1}, u_{k_2}) \, dx\right).
\end{multline*}
Hence, using \eqref{eq:vanishing_and_bounded_energies}, \eqref{eq:lp_and_pointwise_convergence} and \eqref{eq:small_measure}, we arrive at
\[
\limsup_{k_1, k_2 \to \infty} \sum_{Q' \in \mathcal{B}_m} \int_{Q'} d_\sigma^p(du_{k_1}, du_{k_2}) \, dx \le C\varepsilon^p |Q|.
\]
Thus, we conclude that
\begin{multline*}
    \limsup_{k_1, k_2 \to \infty} \int_Q d_\sigma^p(du_{k_1}, du_{k_2}) \, dx \\
    \le C\left(|Q|\left(\varepsilon^p + \max_{Q' \in \mathcal{G}_m}\left(\osc_{Q'} g\right)^p + \frac{1}{m^p(1 - \varepsilon^p)} \left(1 + \frac{\varepsilon^p}{r_0^p}\right)\right) + \frac{1}{m^p(1 - \varepsilon^p)} \limsup_{k \to \infty} E_b(u_k)\right).
\end{multline*}
Finally, since $m$ and $\varepsilon$ were arbitrary, we first let $m \to \infty$ and then $\varepsilon \to 0$ to get
\[
\limsup_{k_1, k_2 \to \infty} \int_Q d_\sigma^p(u_{k_1}, u_{k_2}) \, dx = 0.
\]
Hence, by \eqref{eq:lp_and_pointwise_convergence} and Proposition \ref{pr:completeness_of_sobolev_space}, $u_k \to u$ in $\dot{W}^{1, p}(Q; N)$. 

\underline{Step 4:} We assume without loss of generality that $(du_k)$ converges in $T^*Q \otimes TN$ pointwise a.e. in $Q$. Since $E_s(u_k) \to 0$, it follows that $du_x \in \Ort((T_x\mathbb{R}^d, g_x), (T_{u(x)}N, h_{u(x)}))$ for a.e. $x \in Q$. To prove that $\nu = \nu_u$ a.e. in $Q$ suppose that $(d(u_k)_x)$ and $(\nu_k(x))$ both converge at $x \in Q$ and let $(v_1, \dots, v_d) \in T_x\mathbb{R}^d$ be a positively oriented orthonormal basis with respect to $g_x$. Then
\[
0 = \lim_{k \to \infty} (d(u_k)_x(v_i), \nu_k(x))_h = (du_x(v_i), \nu(x))_h \quad \text{for } i = 1, \dots, d.
\]
Thus, $(\nu(x), du_x(v))_h = 0$ for all $v \in T_x\mathbb{R}^d$. Furthermore, since $(d(u_k)_x(v_1), \dots, d(u_k)_x(v_d), \nu_k(x))$ is positively oriented for all $k$, so is $(du_x(v_1), \dots, du_x(v_d), \nu(x))$. Therefore, by \eqref{eq:lp_and_pointwise_convergence}, we conclude that $\nu = \nu_u$ a.e. in $Q$. Consequently, $u \in \Imm_p(Q; N)$.

\underline{Step 5:} For the final step of the proof, we assume $E_b^S(u_k) \to 0$. We first show that
\begin{equation} \label{eq:annoying_convergence}
    \lim_{k \to \infty} \int_Q d_{\sigma}(du_k \circ S_{u_k}, du \circ S) \, dx = 0.
\end{equation}
By triangle inequality, we observe that
\begin{align*}
    d_{\sigma}(d(u_k)_x \circ S_{u_k}(x), du_x \circ S(x)) &\le d_\sigma(d(u_k)_x \circ S_{u_k}(x), d(u_k)_x \circ S(x)) + d_\sigma(d(u_k)_x \circ S, du_x \circ S(x)) \\
    &= |d(u_k)_x \circ (S_{u_k}(x) - S(x))|_{g, h} + d_\sigma(d(u_k)_x \circ S(x), du_x \circ S(x))
\end{align*}
for all $x \in Q$. We claim that
\begin{equation} \label{eq:estimate_that_should_have_been_easy}
    d_\sigma(d(u_k)_x \circ S(x), du_x \circ S(x)) \le d_\sigma(d(u_k)_x, du_x)(1 +  |S(x)|_g).
\end{equation}
Assuming this for the moment, Hölder's inequality yields
\[
\int_Q d_{\sigma}(du_k \circ S_{u_k}, du \circ S) \, dx \le C\left(|Q|^\frac{1}{p'}E_b^S(u_k)^\frac{1}{p} + \left(\int_Q d_\sigma^p(du_k, du) \, dx\right)^\frac{1}{p}\left(\int_Q (1 + |S|_g)^{p'} \, dx\right)^\frac{1}{p'}\right).
\]
Hence, \eqref{eq:annoying_convergence} follows. To prove \eqref{eq:estimate_that_should_have_been_easy}, we use Corollary \ref{co:maps_sasaki_distance} and Lemma \ref{lm:map_parallel_transport}. Fix $x \in Q$. Let $\gamma : [0, 1] \rightarrow N$ be a piecewise regular curve with $\gamma(0) = u_k(x)$ and $\gamma(1) = u(x)$. Set $E := T^*Q \otimes TN$. Referring to the notation of Lemma \ref{lm:map_parallel_transport}, we have $P_E^\gamma(du_x \circ S(x)) = P_E^\gamma(du_x) \circ S(x)$. Therefore,
\[
|d(u_k)_x \circ S(x) - P_E^\gamma(du_x \circ S(x))|_{g, h}^2 = |(d(u_k)_x - P_E^\gamma(du_x)) \circ S(x)|_{g, h}^2 \le |d(u_k)_x - P_E^\gamma(du_x)|_{g, h}^2 |S(x)|_g^2.
\]
Since $\gamma$ was arbitrary, Corollary \ref{co:maps_sasaki_distance} proves the claim. Next, we will show that $du \circ S_u = du \circ S$ a.e. in $Q$. Given this, it follows from the almost everywhere injectivity of $du$ that $S = S_u$ a.e. in $Q$. Let $\eta \in C_c^\infty(N; TN)$. Since $\eta$ has compact support and $u_k$ is a c.w.d. map, $\eta \circ u_k$ is c.w.d. as well. Hence, by Propositions \ref{pr:product_map_derivative} and \ref{pr:lipschitz_composition} the real valued function $(\nu_k, \eta \circ u_k)_h$ is weakly differentiable. Furthermore, if we denote the vector field $\frac{\partial}{\partial x_j}$ on $Q$ by $X_j$, then we have
\[
\partial_{x_j} (\nu_k, \eta \circ u_k)_h = (K_{TN} \circ d\nu_k \circ X_j, \eta \circ u_k)_h + (\nu_k, K_{TN} \circ d(\eta \circ u_k) \circ X_j)_h \quad \text{a.e. in } Q.
\]
Let $\zeta \in C_c^\infty(Q)$. Differentiating $(\nu_k, \eta \circ u_k)_h\zeta$ with respect to $x_j$ and using the divergence theorem, we obtain
\begin{equation} \label{eq:integration_by_parts_with_big_terms}
    \int_Q (K_{TN} \circ d\nu_k \circ X_j, \eta \circ u_k)_h \zeta \, dx = -\int_Q (\nu_k, K_{TN} \circ d(\eta \circ u_k) \circ X_j)_h \zeta \, dx + \int_Q (\nu_k, \eta \circ u_k)_h \partial_{x_j}\zeta \, dx.
\end{equation}
Thanks to \eqref{eq:lp_and_pointwise_convergence} and the convergence $u_k \to u$ in $\dot{W}^{1,p}(Q; N)$, we see that the limit of the right-hand side is
\begin{multline*}
-\int_Q (\nu, K_{TN} \circ d(\eta \circ u) \circ X_j)_h \zeta \, dx + \int_Q (\nu, \eta \circ u)_h \partial_{x_j}\zeta \, dx \\
= \int_Q (K_{TN} \circ d\nu \circ X_j, \eta \circ u)_h \zeta \, dx = \int_Q (du \circ S_u \circ X_j, \eta \circ u)_h \zeta \, dx,
\end{multline*}
where we used the dominated convergence theorem to pass to the limit. On the other hand, we can use \eqref{eq:annoying_convergence} and the dominated convergence theorem to pass to the limit on the left-hand side of \eqref{eq:integration_by_parts_with_big_terms}. This yields
\begin{multline*}
    \int_Q (K_{TN} \circ d\nu_k \circ X_j, \eta \circ u_k)_h \zeta \, dx = \int_Q (du_k \circ S_{u_k} \circ X_j, \eta \circ u_k)_h \zeta \, dx \to \int_Q (du \circ S \circ X_j, \eta \circ u)_h \zeta \, dx.
\end{multline*}
Consequently, we arrive at
\[
\int_Q (du \circ S \circ X_j, \eta \circ u)_h \zeta \, dx = \int_Q (du \circ S_u \circ X_j, \eta \circ u)_h \zeta \, dx
\]
for all $\eta \in C_c^\infty(N; TN)$, $\zeta \in C_c^\infty(Q)$. It is now immediate that $du \circ S = du \circ S_u$ a.e. in $Q$.

\section*{Acknowledgements}

This work is funded by the Deutsche Forschungsgemeinschaft (DFG, German Research Foundation) under Germany's Excellence Strategy EXC 2044–390685587, Mathematics Münster: Dynamics–Geometry–Structure. This paper is an outgrowth of my Master's thesis at the University of Bonn. I would like to thank Stefan Müller for proposing the problem and for insightful ideas that shaped the direction of this work. I would also like to thank Kerrek Stinson for helpful discussions.

\bibliography{references}
\bibliographystyle{plain}

\end{document}